\documentclass[12pt]{amsart}
\usepackage{amsmath,amssymb,amsxtra,amscd}
\usepackage[margin=3cm]{geometry} 
\usepackage[hypertex]{hyperref}
\usepackage[all]{xy}

\sloppy 


 \numberwithin{equation}{section}

\newtheorem{theorem}{Theorem}[section]
\newtheorem{lemma}[theorem]{Lemma} 
\newtheorem{proposition}[theorem]{Proposition} 
\newtheorem{corollary}[theorem]{Corollary} 
\theoremstyle{definition}
\newtheorem{definition}[theorem]{Definition} 
\newtheorem{notation}[theorem]{Notation} 
\newtheorem{remark}[theorem]{Remark}

\newcommand{\C}{\mathbb{C}}
\newcommand{\Z}{\mathbb{Z}} 
\newcommand{\N}{\mathbb{N}}
\newcommand{\Q}{\mathbb{Q}} 
\newcommand{\R}{\mathbb{R}}
\newcommand{\bS}{\mathbb{S}} 
\newcommand{\tbS}{\widetilde{\bS}} 
\newcommand{\PP}{\mathbb{P}} 
\newcommand{\LL}{\mathbb{L}} 
\newcommand{\hLL}{\widehat{\LL}}
\newcommand{\M}{\mathbb{M}}

\newcommand{\cO}{\mathcal{O}} 
\newcommand{\cS}{\mathcal{S}} 
 
\newcommand{\cE}{\mathcal{E}}

\newcommand{\cD}{\mathcal{D}} 
\newcommand{\cG}{\mathcal{G}}
\newcommand{\cI}{\mathcal{I}} 
\newcommand{\cM}{\mathcal{M}}

\newcommand{\hT}{{\widehat{T}}} 
\newcommand{\htau}{\hat{\tau}}
\newcommand{\hd}{\hat{d}}

\newcommand{\tW}{\widetilde{W}} 

\newcommand{\bx}{\mathbf{x}} 
\newcommand{\bN}{\mathbf{N}}
\newcommand{\bt}{\mathbf{t}}
\newcommand{\bbf}{\mathbf{f}}
\newcommand{\bV}{\mathbf{V}} 
\newcommand{\by}{\mathbf{y}} 
\newcommand{\ybase}{\mathbf{y}^*} 
\newcommand{\bD}{\mathbf{D}}
\newcommand{\bepsilon}{\boldsymbol{\epsilon}}
\newcommand{\tbepsilon}{\tilde{\bepsilon}} 

\newcommand{\frs}{\mathfrak{s}} 
\newcommand{\frI}{\mathfrak{I}} 
 
\newcommand{\frX}{\mathfrak{X}} 
\newcommand{\frf}{\mathfrak{f}}

\newcommand{\vir}{{\rm vir}} 

\renewcommand{\sec}{{\rm sec}}  
\newcommand{\loc}{{\rm loc}} 
\newcommand{\eff}{{\rm eff}}
\newcommand{\noneq}{{\rm noneq}}
\newcommand{\nilp}{{\rm nilp}}

\newcommand{\Spec}{\operatorname{Spec}} 
\newcommand{\Spf}{\operatorname{Spf}}
 
\newcommand{\id}{\operatorname{id}} 
\newcommand{\ev}{\operatorname{ev}} 
\newcommand{\pt}{\operatorname{pt}} 
\newcommand{\Frac}{\operatorname{Frac}} 
\newcommand{\Lie}{\operatorname{Lie}} 
\newcommand{\Eff}{\operatorname{Eff}} 
 
\newcommand{\Hom}{\operatorname{Hom}} 
 
\newcommand{\End}{\operatorname{End}} 
\newcommand{\rank}{\operatorname{rank}}

\newcommand{\GM}{\operatorname{GM}}
\newcommand{\Gr}{\operatorname{Gr}}
\newcommand{\hotimes}{\mathbin{\widehat\otimes}}

\def\corr#1{\left\langle#1 \right\rangle} 
\def\parfrac#1#2{\frac{\partial #1}{\partial #2}} 

\begin{document} 

\title[A mirror construction for toric quantum cohomology]
{A mirror construction for the big equivariant quantum cohomology of toric manifolds}
\author{Hiroshi Iritani} 
\email{iritani@math.kyoto-u.ac.jp} 
\address{Department of Mathematics, Graduate School of Science, 
Kyoto University, Kitashirakawa-Oiwake-cho, Sakyo-ku, 
Kyoto, 606-8502, Japan}
\begin{abstract} 
We identify a certain universal Landau-Ginzburg model as a 
mirror of the big equivariant quantum cohomology of a 
(not necessarily compact or semipositive) toric manifold. 
The mirror map and the primitive form 
are constructed via Seidel elements and 
shift operators for equivariant quantum cohomology. 
Primitive forms in non-equivariant theory 
are identified up to automorphisms of the mirror. 
\end{abstract} 

\maketitle

\section{Introduction} 
The big quantum cohomology of a smooth projective variety $X$ 
is a formal family of commutative products $\star_\tau$ 
on the space $H^*(X)[\![Q]\!]$ parameterized by 
$\tau\in H^*(X)$,  
which gives the cup product at the limit $Q=\tau=0$. 
Here $Q$ is the Novikov variable that keeps track of 
the degree of rational curves in $X$. 
It is well-known that the quantum product $\star_\tau$ 
defines a flat connection on the tangent bundle of $H^*(X)$ 
with parameter $z\in \C^\times$ 
\[
\nabla_\alpha = \partial_\alpha + z^{-1} (\alpha \star_\tau) 
\qquad \text{for $\alpha \in H^*(X)$}  
\]
called the \emph{quantum connection}. 
This connection defines the structure of a $\cD$-module on 
the tangent sheaf of $H^*(X)$; by a slight abuse of language we refer to 
this $\cD$-module also as quantum connection. 
In this paper, we study mirror symmetry for the quantum connection   
of a toric variety. More precisely, we identify the quantum connection 
with the Gauss-Manin connection associated with a function (Landau-Ginzburg potential) 
on the algebraic torus $(\C^\times)^D$ of dimension $D = \dim X$.

Givental \cite{Givental:ICM, Givental:toric_mirrorthm} 
and Hori-Vafa \cite{Hori-Vafa} proposed that 
a mirror of a toric variety is a Laurent polynomial 
function on $(\C^\times)^D$ of the following form: 
\begin{equation} 
\label{eq:undeformed_potential} 
F(x) = Q^{\beta_1} x^{b_1} + \cdots + Q^{\beta_m} x^{b_m},  
\qquad x\in (\C^\times)^D
\end{equation} 
where $b_1,\dots,b_m\in \bN \cong \Z^D$ are primitive generators 
of one-dimensional cones of the fan $\Sigma$ of the toric variety $X_\Sigma$,  
$\beta_i = \beta(b_i) \in H_2(X_\Sigma,\Z)$ is a certain curve class 
determined by the choice of a splitting of the fan sequence (see \S \ref{subsec:GM}).  
Givental's mirror theorem \cite{Givental:toric_mirrorthm} implies, 
when the toric variety $X_\Sigma$ is compact and $c_1(X_\Sigma)$ is semipositive, 
that the small\footnote
{Small means that the parameter $\tau$ is restricted to lie in $H^2$; 
big means that the parameter space is the whole cohomology group.} 
quantum cohomology of $X_\Sigma$ is isomorphic to the Jacobi ring of $F(x)$ 
and that the small quantum connection of $X_\Sigma$ is isomorphic to 
the twisted de Rham cohomology 
$H^D(\Omega_{(\C^\times)^D}^\bullet[z], z d + dF \wedge )$ 
equipped with the Gauss-Manin connection. 
Givental \cite{Givental:toric_mirrorthm} also showed that 
a mirror of \emph{equivariant} quantum cohomology is given by adding 
a logarithmic term to the potential: 
$F_\lambda(x) = F(x) + \sum_{i=1}^D \lambda_i \log x_i$, 
where $\lambda_1,\dots,\lambda_D$ are the equivariant parameters 
for a torus $T \cong (\C^\times)^D$ acting on the toric variety $X_\Sigma$. 

A generalization of this result to big quantum cohomology has been 
studied by Barannikov \cite{Barannikov:projective} and 
Douai-Sabbah \cite{Douai-Sabbah:II}. 
They obtained big quantum cohomology mirrors of (weighted) projective spaces 
by adding to $F(x)$ monomial terms which form a basis of the Jacobi ring. 
This leads to an isomorphism of Frobenius manifolds between 
the A-model (quantum cohomology) and the B-model (singularity theory). 

The aim of this paper is to describe mirror symmetry for 
both \emph{big} and \emph{equivariant} quantum cohomology. 
It turns out that the mirror has a simple description. 
We introduce a \emph{universal Landau-Ginzburg potential} $F_\lambda(x;\by)$  
of the form: 
\[
F_\lambda(x;\by) = \sum_{k } y_k Q^{\beta(k)} x^k - \lambda \cdot \log x. 
\]
Here the sum is taken over \emph{all} lattice points $k\in \bN$ 
in the support $|\Sigma|$ of the fan, $\beta(k)\in H_2(X_\Sigma,\Z)$ 
is a certain curve class (it is determined by the conditions that 
$\beta(b_i) = \beta_i$, $1\le i\le m$ and that $\beta(k+l) = \beta(k) + \beta(l)$ 
whenever $k, l$ belong to the same cone of $\Sigma$; see \S \ref{subsec:GM}) and  
$\by = \{y_k\}$ is an infinite set of parameters. 
The infinitely many parameters $\by$ of the Landau-Ginzburg 
potential $F_\lambda(x;\by)$ correspond, under mirror symmetry, 
to the parameter $\tau$ in the equivariant cohomology $H_T^*(X_\Sigma)$ 
which is also infinite dimensional.  
The twisted de Rham cohomology of the universal potential $F_\lambda$ 
gives the \emph{Gauss-Manin system} $\GM(F_\lambda)$ 
which forms a $\cD$-module over the $\by$-space 
via the Gauss-Manin connection. 
Roughly speaking, $\GM(F_\lambda)$ consists of volume forms 
$\varphi(x;\by) \frac{dx_1}{x_1} \cdots \frac{dx_D}{x_D}$ on the torus 
$(\C^\times)^D$ and the $\cD$-module structure is defined 
in such a way that the oscillatory integrals 
\[
\int \varphi(x;\by) e^{F_\lambda(x;\by)/z} \frac{dx_1}{x_1} \cdots \frac{dx_D}{x_D} 
\]
are solutions. 
We refer to \S\ref{subsec:GM} for a precise definition; we define 
$\GM(F_\lambda)$ over the ring of formal power series 
in the parameters $\by$. 
Our main theorem is stated as follows. 

\begin{theorem}[see Theorem \ref{thm:mirror_isom}, 
Corollary \ref{cor:equiv_Jacobi_ring} for precise statements]
\label{thm:main_introd} 
Let $\bN\cong \Z^D$ be a lattice and let $\Sigma$ be a fan 
in $\bN \otimes \R$ which defines a smooth semi-projective\footnote
{This is equivalent to $X_\Sigma$ being a GIT quotient of a vector space. 
We do not assume that $X_\Sigma$ is projective or $c_1(X_\Sigma)$ is semipositive.} 
toric variety $X_\Sigma$ having a torus-fixed point. 
There is a formal invertible change of variables (mirror map) between 
the A-model parameter $\tau \in H^*_T(X_\Sigma)$ 
and the B-model parameter $\by$ such that 
the Gauss-Manin system $\GM(F_\lambda)$ of $F_\lambda$ 
is identified with the big equivariant quantum connection of $X_\Sigma$ 
and that the Jacobi ring of $F_\lambda$ is identified with the equivariant 
quantum cohomology of $X_\Sigma$, under the mirror map. 
\end{theorem}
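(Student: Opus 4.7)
The plan is to use equivariant Seidel elements to construct the mirror map, and shift operators to transfer the $\cD$-module structure. For each lattice point $k\in |\Sigma|\cap \bN$, the cocharacter $\rho_k\colon \C^\times\to T$ determines a Hamiltonian $\PP^1$-bundle on $X_\Sigma$, yielding a big equivariant Seidel element $S_k\in QH^*_T(X_\Sigma)$ together with a shift operator $\widehat{S}_k$ on the equivariant quantum $\cD$-module. I would first establish the key algebraic properties of these objects at $\tau=0$: that $S_k\cdot 1$ has leading term $Q^{\beta(k)}$ times an explicit localized equivariant class, that $S_k\star_{\tau=0} S_l=S_{k+l}$ whenever $k,l$ lie in a common cone of $\Sigma$, and that the classes $\{S_k\cdot 1\}_k$ span $H^*_T(X_\Sigma)$ as $k$ ranges over lattice points of $|\Sigma|$. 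These properties are known in the small/compact case, and I would extend them to the big equivariant semi-projective setting by equivariant localization on the Seidel space, using the torus-fixed point hypothesis to make the localization computations well-defined.

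Granting these facts, the mirror map is defined formally by the ansatz
\[
\tau(\by,\lambda)=\sum_{k\in |\Sigma|\cap \bN} y_k\, S_k\cdot 1 \ +\ (\text{higher-order and equivariant corrections}),
\]
interpreted as a formal power series in $\by$. Invertibility of $\by\mapsto \tau(\by,\lambda)$ at the origin follows from the spanning property and the formal inverse function theorem. Under this map, the candidate ring isomorphism $\operatorname{Jac}(F_\lambda)\cong QH^*_T(X_\Sigma)$ sends $x^k\mapsto S_k\cdot 1$; I would verify it by matching the Batyrev-type presentation of equivariant quantum cohomology against the Euler relations $x_i\,\partial_{x_i}F_\lambda=\lambda_i$ and the $\by$-derivative relations on the B-side, and then reducing the $\by=0$ specialization to Givental's theorem for the small equivariant case.

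The $\cD$-module identification is where the shift operators do the heavy lifting. On the B-side, the Gauss-Manin connection acts by $z\partial_{y_k}\mapsto Q^{\beta(k)}x^k\wedge$ and $z\partial_{\lambda_i}\mapsto -(\log x_i)\wedge$ on twisted de Rham classes; on the A-side, $z\partial_\tau$ is quantum product, and the shift operator $\widehat{S}_k$ realizes a logarithmic derivative in $\lambda$ by shifting equivariant parameters by $-zk$. The key compatibility I need is an intertwining of the form $\widehat{S}_k=(S_k\star_\tau)\circ(\text{shift in }\lambda)$, together with the statement that these operators generate the full Gauss-Manin system. Once this is checked at $\tau=\by=0$ via the same localization machinery, the compatibility propagates to the formal neighborhood because both sides are determined by their restriction to the base point and the flat $\cD$-module structure extending it.

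The main obstacle will be handling the infinite family $\{y_k\}$ and the non-compactness of $X_\Sigma$ simultaneously. One must define the Seidel elements and shift operators in the big equivariant semi-projective setting, show that the resulting formulas extend coherently over the formal completion in infinitely many variables, and track the Novikov filtration so that every coefficient of the mirror map is a well-defined power series in $\by$. The extension of virtual localization from the projective to the semi-projective case relies essentially on the existence of a torus-fixed point and on working in a localization of $H^*_T(\pt)$; rigorously organizing this, together with the identification of all Seidel/shift computations with the corresponding toric lattice combinatorics, is what I expect to be the technical heart of the proof.
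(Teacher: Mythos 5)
Your toolbox is the right one --- Seidel elements for the mirror map, shift operators for the $\cD$-module structure, the formal inverse function theorem for invertibility, and topological bases indexed by lattice points of $|\Sigma|$ --- but two steps of the actual construction are missing, and as written the argument would not close. First, the mirror map cannot be produced by an ansatz ``$\tau=\sum_k y_k S_k\cdot 1+\text{corrections}$'' with the key identities only verified at $\tau=0$ and then ``propagated.'' The paper defines $\tau(\by)$ as the integral flow of the commuting vector fields $\bV_k$, i.e.\ by the system $\parfrac{\tau(\by)}{y_k}=S_k(\tau(\by))$ with the Seidel element evaluated at the \emph{moving} point; well-posedness rests on the Seidel representation $S_k(\tau)\star_\tau S_l(\tau)=Q^{d(k,l)}S_{k+l}(\tau)$ and on $[\nabla_\alpha,\bS_k(\tau)]=0$ holding for \emph{all} $\tau$, not just at the base point. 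Your identity $\sum_k y_k S_k$ does appear, but only a posteriori (from homogeneity) as a characterization of the \emph{inverse} mirror map; it is not a construction. Relatedly, invoking Givental's small mirror theorem at $\by=0$ is both unnecessary and against the grain of the argument: everything follows formally from the shift-operator axioms, with the base-point normalization supplied by the classical limit $\lim_{Q\to 0}S_k(0)=\phi_k$.

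Second, and more seriously, you never introduce the companion function $\Upsilon(\by,z)\in H^*_\hT(X_\Sigma)[\![Q]\!][\![\by]\!]$ that serves as the image of the standard form $\omega$ under the mirror isomorphism. The isomorphism is $\Theta(w^{\Psi(k)}\omega)=\bS_k(\tau(\by))\,\Upsilon(\by,z)$, and the intertwining of the Gauss--Manin connection $\nabla_{\parfrac{}{y_k}}\omega=z^{-1}w^{\Psi(k)}\omega$ with the quantum connection forces $\Upsilon$ to solve the coupled system $\parfrac{\Upsilon}{y_k}=[z^{-1}\bS_k(\tau(\by))]_+\Upsilon$ alongside the mirror map. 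If one naively takes the image of $\omega$ to be $1$, the connection intertwining fails, because $\bS_k(\tau)1$ differs from $S_k(\tau)=S_k(\tau)\star_\tau 1$ by positive powers of $z$; the discrepancy $[z^{-1}\bS_k(\tau)]_+$ is exactly what $\Upsilon$ absorbs. (For the Jacobi-ring statement alone, which lives at $z=0$, one can get away without $\Upsilon$, but not for the $\cD$-module identification that the theorem asserts.) Finally, the matching of the $\lambda_i$-action is not a separate localization computation: it is the linear relation $\sum_k(\chi\cdot k)\,y_k\,S_k(\tau(\by))=\chi$ forced by the definition of $\hLL$, which again requires the flow equations at general $\tau$. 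Your closing paragraph correctly identifies the formal-geometry bookkeeping in infinitely many variables as a technical issue, but the heart of the proof is the simultaneous flow $(\tau(\by),\Upsilon(\by,z))$, not the localization estimates.
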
 
The key ingredients of the proof are Seidel elements and shift operators for equivariant 
quantum cohomology. In fact, 
the above theorem follows almost as a \emph{formal consequence} 
of properties of these operators.   
A Seidel element is an invertible element of quantum cohomology 
associated to a Hamiltonian circle action on a symplectic manifold, 
introduced by Seidel \cite{Seidel:pi1}. 
This can be ``lifted'' to the equivariant setting and yields a 
shift operator for equivariant parameters 
\cite{Okounkov-Pandharipande:Hilbert, BMO:Springer, 
Maulik-Okounkov, Iritani:shift}. 
The mirror map and the mirror isomorphism in the 
above theorem can be described as follows: 

\begin{theorem}[a generalization of \cite{Gonzalez-Iritani:Selecta};  
Proposition \ref{prop:mirror_map}] 
The mirror map $\by \mapsto \tau(\by)$ is characterized by 
the differential equation 
\[
\parfrac{\tau(\by)}{y_k} = S_k(\tau(\by)) 
\]
together with a certain asymptotic initial condition,  
where $S_k(\tau)$ is the Seidel element associated to the 
$\C^\times$-action $k \in \bN$. Here we identify $\bN$ with 
the cocharacter lattice $\Hom(T,\C^\times)$ of the natural 
torus $T$ acting on $X_\Sigma$. 
\end{theorem}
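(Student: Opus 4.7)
The plan is to bootstrap from Theorem \ref{thm:mirror_isom}, which already provides the mirror isomorphism between $\GM(F_\lambda)$ and the big equivariant quantum connection, and to track how the B-side vector field $\partial_{y_k}$ is transported under this isomorphism. The characterization will then reduce to recognizing the image as the Seidel element $S_k(\tau)$.

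First, since $F_\lambda(x;\by)$ depends linearly on $y_k$ with coefficient $Q^{\beta(k)} x^k$, differentiating $e^{F_\lambda/z}$ in $y_k$ shows that the Gauss-Manin action of $z \partial_{y_k}$ on the twisted de Rham complex is, modulo ``connection terms'' coming from differentiating the form coefficients, just multiplication by the monomial $Q^{\beta(k)} x^k$. In particular, at the level of the Jacobi ring, $z\partial_{y_k}$ is represented by multiplication by the class $[Q^{\beta(k)} x^k]$.

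Next I would transport this to the A-side. The differential of the mirror map sends $\partial_{y_k}$ to a tangent vector at $\tau(\by)$, which I identify with a class $v_k(\tau) \in H^*_T(X_\Sigma)$ via the flat trivialization of the tangent bundle of $H^*_T(X_\Sigma)$. The quantum connection operator $z \nabla_{v_k} = z \partial_{v_k} + v_k \star_\tau$ must, under the mirror isomorphism, match $z \nabla^{\GM}_{\partial_{y_k}}$. Comparing the ``multiplication by a cohomology class'' parts and invoking the ring-level statement of Theorem \ref{thm:mirror_isom} (equivariant quantum cohomology is isomorphic to the Jacobi ring, sending unit to unit), I conclude that $v_k(\tau)$ is the class in equivariant quantum cohomology corresponding to $[Q^{\beta(k)} x^k]$ in the Jacobi ring. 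The crucial input, generalizing the small toric case of \cite{Gonzalez-Iritani:Selecta}, is that this mirror class is precisely the Seidel element $S_k(\tau)$: the shift operator associated to the cocharacter $k \in \bN$ intertwines quantum multiplication by $S_k(\tau)$ on the A-side with B-side multiplication by $Q^{\beta(k)} x^k$. Combining these identifications gives $\partial \tau/\partial y_k = v_k(\tau) = S_k(\tau(\by))$.

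For uniqueness, I would argue by formal power series: once compatibility of the system $\partial\tau/\partial y_k = S_k(\tau)$ is granted (automatic once a mirror map exists), the asymptotic initial condition pins down $\tau(\by)$ order by order in $\by$; the initial condition itself is dictated by the logarithmic term $-\lambda \cdot \log x$ of $F_\lambda$, which carries the equivariant parameters and fixes the integration constant. The main obstacle I expect is establishing the identification $v_k(\tau) = S_k(\tau)$ in the full big-equivariant regime: one must carefully keep track of the infinite-dimensional parameter $\by$ and verify the intertwining property of the shift operator at every order in $\by$, consistently within an appropriate formal completion of the B-model ring. Once this intertwining is in hand, the derivation of the PDE and its uniqueness are essentially formal consequences.
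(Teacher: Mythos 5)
Your proposal is circular relative to the logical structure of the paper. You propose to ``bootstrap from Theorem~\ref{thm:mirror_isom}'' and transport $\partial_{y_k}$ through the mirror isomorphism, but in the paper the mirror isomorphism $\Theta$ is \emph{defined} by $\Theta(w^{\Psi(k)}\omega)=\bS_k(\tau(\by))\Upsilon(\by,z)$, i.e.\ its very construction requires the functions $\tau(\by)$ and $\Upsilon(\by,z)$ that Proposition~\ref{prop:mirror_map} produces; and the intertwining of $w^{\Psi(k)}$ with the shift operator (the ``crucial input'' you invoke to identify $v_k(\tau)$ with $S_k(\tau)$) is part (2) of Theorem~\ref{thm:mirror_isom}, which is itself verified using the differential equation \eqref{eq:flow}. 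There is no independent construction of the mirror map or of $\Theta$ available: the paper deliberately avoids relying on a mirror theorem, so the statement cannot be derived from a pre-existing mirror correspondence. The real content of the proposition is an \emph{existence} statement, not a derivation: one must show that the overdetermined system $\partial\tau/\partial y_k=S_k(\tau)$, $\partial\Upsilon/\partial y_k=[z^{-1}\bS_k(\tau)]_+\Upsilon$ over the infinite set of variables $\by$ admits a (unique) solution of the prescribed form. Your remark that compatibility is ``automatic once a mirror map exists'' begs exactly this question.

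The paper's proof supplies the missing ingredients as follows: (i) the vector fields $\bV_k=(S_k(\tau),[z^{-1}\bS_k(\tau)]_+\Upsilon)$ commute, which follows from Proposition~\ref{prop:shift}(3) (equivalently, from the fact that under the fundamental solution they become the commuting linear vector fields $\bbf\mapsto z^{-1}\cS_k\bbf$ on the Givental space); (ii) the formal flow theorem in infinite dimensions (Theorem~\ref{thm:formal_flow}) integrates these commuting fields from the initial data along the locus $\{y_k=0,\ k\in G\}$ provided by \cite[Proposition 4.7]{Iritani:shift}; (iii) the expansion \eqref{eq:tau_Upsilon_expansion} indexed by $\hLL_\eff$ is established via the divisor equation and an induction on powers of $\{y_k:k\in G\}$, using that $e_k-\Psi(k)\in\hLL_\eff$ with $d(e_k-\Psi(k))=0$. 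None of these steps appears in your proposal, and without (i)--(iii) the characterization in the statement has no content. If you want to salvage your B-side heuristic (that $z\partial_{y_k}$ acts as multiplication by $Q^{\beta(k)}x^k$), it can serve as motivation for why the answer should be the Seidel element, but the proof must run in the direction: shift-operator identities $\Rightarrow$ commuting vector fields $\Rightarrow$ mirror map $\Rightarrow$ mirror isomorphism, not the reverse.
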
 

\begin{theorem}[Theorem \ref{thm:y+}, Corollary \ref{cor:primitive_Upsilon}]
Introduce another infinite set $\by^+ = \{y_{k,n} : k\in \bN\cap |\Sigma|, 
n =1,2,3,\dots\}$ of variables 
and consider a formal family of elements in $\GM(F_\lambda)$:  
\[
\omega(\by^+) = \exp\left(\sum_{k\in \bN\cap|\Sigma|} \sum_{n=1}^\infty 
y_{k,n} z^{n-1} Q^{\beta(k)} x^k\right) 
\frac{dx_1}{x_1}\wedge \cdots \wedge \frac{dx_D}{x_D}. 
\]
The image $\Upsilon(\by,\by^+,z)
\in H^*_T(X_\Sigma)[z][\![Q]\!][\![\by,\by^+]\!]$ 
of $\omega(\by^+)$ under the mirror isomorphism 
in Theorem \ref{thm:main_introd} is characterized by the differential equation: 
\begin{align*} 
\parfrac{\Upsilon(\by,\by^+,z)}{y_{k,n}} & = [z^{n-1} \bS_k(\tau(\by))]_+ 
\Upsilon(\by,\by^+,z) \qquad n=0,1,2,\dots, 
\end{align*}
together with a certain asymptotic initial condition, 
where $\bS_k(\tau)$ denotes the shift operator associated to 
the $\C^\times$-action $k\in \bN$ 
and we set $y_{k,0} :=y_k$. 
In particular, a primitive form in the sense of K.~Saito \cite{SaitoK:primitiveform} 
is given by $\omega(\by^+)$ with $\by^+$ 
satisfying $\Upsilon(\by,\by^+,z)=1$. 
\end{theorem}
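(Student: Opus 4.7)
The plan is to transport the problem to the A-side via the mirror isomorphism of Theorem~\ref{thm:main_introd}, compute derivatives directly on the B-side, and identify the resulting multiplication operators via the shift-operator theory that underlies the mirror construction. Differentiating $\omega(\by^+)$ with respect to $y_{k,n}$ on the B-side is immediate: pulling the factor $z^{n-1} Q^{\beta(k)} x^k$ out of the exponential gives
\[
\partial_{y_{k,n}} \omega(\by^+) = z^{n-1} Q^{\beta(k)} x^k \cdot \omega(\by^+) \quad \text{in } \GM(F_\lambda).
\]
Hence the content of the theorem reduces to identifying, under the mirror isomorphism, the image of multiplication by $z^{n-1} Q^{\beta(k)} x^k$ on $\GM(F_\lambda)$ with the operator $[z^{n-1}\bS_k(\tau(\by))]_+$ on the quantum $\cD$-module.

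I would establish this identification as follows. The oscillatory-integral definition of $\GM(F_\lambda)$ makes the Gauss--Manin connection in the $y_k$-direction act as $z\nabla^{\GM}_{y_k} = z\partial_{y_k} + Q^{\beta(k)} x^k$, so multiplication by $Q^{\beta(k)} x^k$ coincides with $z\nabla^{\GM}_{y_k} - z\partial_{y_k}$. Under the mirror isomorphism and Proposition~\ref{prop:mirror_map}, $z\nabla^{\GM}_{y_k}$ corresponds to the quantum connection in the direction $\partial\tau/\partial y_k = S_k(\tau(\by))$, whose natural $z$-dependent lift to the $T$-equivariant setting is by definition the shift operator $\bS_k(\tau,z)$. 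The truncation $[z^{n-1}\bS_k]_+$ then extracts precisely the polynomial-in-$z$ piece of $z^{n-1}\bS_k$ that acts within $H^*_T(X_\Sigma)[z][\![Q]\!][\![\by,\by^+]\!]$, while the remaining strictly-negative-in-$z$ part is absorbed into the $z\partial_{y_k}$ correction forced by the mirror map, consistent with the flatness of the Gauss--Manin connection. The argument for $n=0$ reproduces Proposition~\ref{prop:mirror_map}, and the case of general $n$ follows by the same mechanism together with the standard intertwining relations satisfied by $\bS_k$.

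For the initial condition, at $\by^+=0$ the form $\omega(0)=\frac{dx_1}{x_1}\wedge\cdots\wedge\frac{dx_D}{x_D}$ is the basic volume form on the torus, which Theorem~\ref{thm:main_introd} identifies with the unit $1\in H^*_T(X_\Sigma)$, giving $\Upsilon(\by,0,z)=1$; together with the differential equations above, this pins down $\Upsilon$ uniquely as a formal power series in $\by^+$. The primitive-form statement then follows from K.~Saito's criterion: a section of $\GM(F_\lambda)$ whose A-side image equals $1$ is automatically a primitive form, so solving $\Upsilon(\by,\by^+,z)=1$ implicitly for $\by^+$ as a function of $\by$ produces such a section. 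The main obstacle I anticipate is the precise bookkeeping in the second paragraph, namely verifying that the $+$-truncation agrees term by term with the A-side image of monomial multiplication. This requires matching the $z$-expansion of $\bS_k$ against the Seidel-element equation for the mirror map, and keeping straight the distinction between the Gauss--Manin $\partial_{y_k}$ (which involves a $z^{-1}$ term) and the ordinary derivative $\partial_{y_{k,0}}$ appearing in the theorem.
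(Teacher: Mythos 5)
Your overall strategy---push $\omega(\by^+)$ through the mirror isomorphism and read off the derivatives from the intertwining properties---is the same as the paper's (the paper derives Theorem \ref{thm:y+} directly from Theorem \ref{thm:mirror_isom} and Proposition \ref{prop:mirror_map}). But two steps as written do not go through.

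First, your initial condition is wrong. You assert that Theorem \ref{thm:main_introd} identifies $\omega(0)=\omega$ with the unit, so that $\Upsilon(\by,0,z)=1$. In fact the mirror isomorphism sends $\omega$ to $\Upsilon(\by,z)$ of Proposition \ref{prop:mirror_map}, which equals $1$ only at the base point $\by=\ybase$, $Q=0$, not identically in $\by$. (If $\Theta(\omega)$ were identically $1$, then $\omega$ itself would already be the primitive form and the entire $\by^+$-deformation, as well as Corollary \ref{cor:primitive_Upsilon}, would be vacuous.) Moreover the constant function $1$ does not solve the $n=0$ equation $\parfrac{\Upsilon}{y_k}=[z^{-1}\bS_k(\tau(\by))]_+\Upsilon$, since $[z^{-1}\bS_k(\tau)]_+1=z^{-1}(\bS_k(\tau)1-S_k(\tau))$ is generically nonzero; so the system you write down with your initial data has no solution. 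The correct ``asymptotic initial condition'' is the shape of the expansion \eqref{eq:expansion_Upsilon_y+}: $\Upsilon=1+\sum_{(\ell,\ell^+)\neq 0}\Upsilon_{\ell,\ell^+}(z)\by^\ell(\by^+)^{\ell^+}Q^{d(\ell,\ell^+)}$, i.e.\ normalization only at $\by=\ybase$, $\by^+=0$, $Q=0$.

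Second, the ``bookkeeping'' you flag as the main obstacle is a real gap, because the ordinary derivative $\partial_{y_k}$ on the B-side does \emph{not} correspond to the ordinary derivative $\partial_{y_k}$ on the A-side: $\Theta$ is $\C[z][\![Q]\!][\![\by]\!]$-linear but sends the $\by$-independent basis $w^{\Psi(l)}\omega$ to the $\by$-dependent classes $\bS_l(\tau(\by))\Upsilon(\by,z)$, so writing multiplication by $w^{\Psi(k)}$ as $z\nabla^{\GM}_{y_k}-z\partial_{y_k}$ and transporting term by term does not directly yield $[z^{-1}\bS_k]_+$; the negative powers of $z$ are not ``absorbed'' automatically. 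The clean argument, which is what the paper intends, avoids this entirely: for $n\ge 1$ one has $\partial_{y_{k,n}}\omega(\by^+)=z^{n-1}w^{\Psi(k)}\omega(\by^+)$ and Theorem \ref{thm:mirror_isom}(2) gives $\partial_{y_{k,n}}\Upsilon=z^{n-1}\bS_k(\tau(\by))\Upsilon$, which equals $[z^{n-1}\bS_k(\tau(\by))]_+\Upsilon$ because the correction term carries a factor $\delta_{n,0}$; for $n=0$ one uses that the coefficient of $\omega(\by^+)$ is independent of $\by$, so $\nabla^{\GM}_{\partial_{y_k}}\omega(\by^+)=z^{-1}w^{\Psi(k)}\omega(\by^+)$, and Theorem \ref{thm:mirror_isom}(2),(3) give $\partial_{y_k}\Upsilon+z^{-1}S_k(\tau(\by))\star_{\tau(\by)}\Upsilon=z^{-1}\bS_k(\tau(\by))\Upsilon$, which is exactly the $[\cdot]_+$ equation. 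Finally, for the primitive-form statement you still need the solvability of $\Upsilon(\by,\by^+,z)=1$ in $\by^+$, which the paper gets from the formal inverse function theorem (Lemma \ref{lem:very_big_mirrormap}); ``solving implicitly'' needs this justification.
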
 

Mirror symmetry for non-equivariant big quantum cohomology 
follows immediately by taking a non-equivariant limit of 
Theorem \ref{thm:main_introd}. 
In order to obtain a Landau-Ginzburg potential and 
a (cochain-level) primitive form in the non-equivariant setting, we need 
to choose a formal map 
$(\frs,\frf)\colon H^*(X_\Sigma)[\![Q]\!] \to H_T^*(X_\Sigma)[\![Q]\!]  
\times H^*_T(X_\Sigma)[z][\![Q]\!]$ such that 
the non-equivariant limit of $(\frs(\sigma),\frf(\sigma))$ equals 
$(\sigma,1)$ for $\sigma \in H^*(X_\Sigma)[\![Q]\!]$. 
The pull-back $\frs^*F$ of the universal Landau-Ginzburg potential 
with $\lambda = 0$ 
gives a universal unfolding of the original potential 
\eqref{eq:undeformed_potential} (for some choice of $\frs$; 
see Proposition \ref{prop:universal_unfolding}) 
and the data $(\frs,\frf)$ associates a primitive form $\zeta_{(\frs,\frf)}$ 
of the Gauss-Manin system $\GM(\frs^*F)$. 
These primitive forms associated with various choices 
are related to each other by co-ordinate changes of the mirror.

\begin{theorem}[Theorems \ref{thm:noneq}, \ref{thm:orbit_space} 
and Corollary \ref{cor:noneq_primitive}]
The Gauss-Manin system $\GM(\frs^*F)$ of $\frs^* F$ 
is isomorphic to the non-equivariant 
big quantum connection of $X_\Sigma$. 
Moreover, oscillatory primitive forms $\exp(\frs^* F/z)\zeta_{(\frs,\frf)}$ 
associated with various data $(\frs,\frf)$ are related to each other 
by reparametrizations of the mirror. 
\end{theorem}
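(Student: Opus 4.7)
The plan is to deduce the theorem as a non-equivariant specialization of the equivariant mirror theorem (Theorem~\ref{thm:main_introd}) together with the universal section $\Upsilon$ from the preceding theorem. The recipe is: pull back the equivariant mirror data along the formal map $\frs$ and then take the non-equivariant limit $\lambda\to 0$, treating $\frf$ as the gauge that produces a primitive volume form.

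First I would prove the identification of $\GM(\frs^*F)$ with the non-equivariant big quantum connection. Invert the equivariant mirror map $\by\mapsto \tau(\by)$ along the image of $\frs$ to obtain a formal series $\sigma\mapsto \by(\sigma)$ with $\tau(\by(\sigma))=\frs(\sigma)$; this inversion is possible because $\frs$ reduces to the identity at $\lambda=0$ and the equivariant mirror map is formally invertible, which is essentially the content of Proposition~\ref{prop:universal_unfolding} (that $\frs^*F$ is a universal unfolding of $F$). Functoriality of $\GM$ under base change then gives $\GM(\frs^*F)\cong \GM(F_\lambda)|_{\by=\by(\sigma)}$, and Theorem~\ref{thm:main_introd} identifies the right-hand side with the equivariant quantum connection at $\tau=\frs(\sigma)$. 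Specializing $\lambda=0$ yields the non-equivariant quantum connection at parameter $\sigma$, since $\lim_{\lambda\to 0}\frs(\sigma)=\sigma$ by assumption on $\frs$.

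Next I would construct the primitive form $\zeta_{(\frs,\frf)}$ and verify the reparametrization property. Using the $\Upsilon$-theorem stated above, solve the differential equations in the $\by^+$-direction to find a formal series $\by^+(\sigma)$ with $\Upsilon(\by(\sigma),\by^+(\sigma),z)=\frf(\sigma)$; solvability follows from $\frf(\sigma)|_{\lambda=0}=1$ and from the invertibility of the shift-operator action, with the equivariant Jacobi-ring description of Corollary~\ref{cor:equiv_Jacobi_ring} providing the required surjectivity. Set $\zeta_{(\frs,\frf)}:=\omega(\by^+(\sigma))|_{\by=\by(\sigma),\,\lambda=0}\in \GM(\frs^*F)$; under the mirror isomorphism just established, $\exp(\frs^*F/z)\,\zeta_{(\frs,\frf)}$ corresponds to the flat section with initial value $\frf(\sigma)|_{\lambda=0}=1$, and Saito's primitive-form axioms follow from this normalization together with the Frobenius structure on quantum cohomology. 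For two data $(\frs_i,\frf_i)$ with the same non-equivariant limit $(\sigma,1)$, the maps $\sigma\mapsto (\by_i(\sigma),\by^+_i(\sigma))$ are two embeddings of the $\sigma$-space into the universal mirror that agree at $\lambda=0$, so their composition is a formal reparametrization of the mirror that pulls one oscillatory primitive form onto the other; both primitive forms are restrictions of the single universal volume form $\omega(\by^+)\,e^{F/z}$.

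The main obstacle is the formal inversion in the first step: one has to check that $\by(\sigma)$ lies in the appropriate completion despite the infinite-dimensional parameter spaces, and that the inversion is compatible with the non-equivariant limit. Concretely this means verifying that the Seidel elements $S_k(\tau)$ and shift operators $\bS_k(\tau)$ have well-defined non-equivariant limits for $X_\Sigma$ only assumed semi-projective with a torus-fixed point, so that both the equivariant mirror map and its formal inverse on the image of $\frs$ make sense after setting $\lambda=0$. Once this is established, the remaining statements reduce to naturality of the universal family $\omega(\by^+)$ under reparametrization.
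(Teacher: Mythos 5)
Your first paragraph follows the paper's route for Theorem \ref{thm:noneq} in spirit, but it waves at the one point the paper actually has to work for: ``functoriality of $\GM$ under base change'' is not automatic here, because the Gauss--Manin system is presented as a cokernel and the completed tensor product along $\frs^*$ is not right exact in this infinite-dimensional formal setting. The paper proves exactness of the base-changed presentation by an explicit diagram chase using the surjectivity of $\frs^*$ and a choice of lifts $\widehat{T}_i$; some such argument is needed, and your appeal to the non-equivariant limits of Seidel elements is not where the difficulty lies (that is already settled by semi-projectivity in \S\ref{sec:generalities}).

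The second half has a genuine gap. ``Reparametrization of the mirror'' in the statement means a coordinate change in the $x$-variables of the Landau--Ginzburg model, i.e.\ an element of the group $J\cG$ of transformations $x_i \mapsto x_i\exp(\sum_{k,n}\epsilon_{k,i,n}w^{\Psi(k)}z^n)$, not an arbitrary identification of the two parameter families $\sigma\mapsto(\by_i(\sigma),\by_i^+(\sigma))$. Your sentence ``their composition is a formal reparametrization of the mirror that pulls one oscillatory primitive form onto the other'' conflates these: any two sections of the $(\by,\by^+)$-space over $\sigma$-space can be matched by a change of deformation parameter, but that says nothing about whether the corresponding oscillatory forms $e^{F(x;\by)/z}\omega(\by^+)$ differ by a substitution in $x$. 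The substantive content of Theorem \ref{thm:orbit_space} is precisely this implication, and it requires (a) computing how the Lie algebra generators $W_{k,i,n}=z^nw^{\Psi(k)}x_i\partial_{x_i}$ of $J\cG$ act on $(\tau(\by),\Upsilon(\by,\by^+,z))$ and finding that the action is $\lambda_i\cdot(\delta_{n,0}S_k(\tau),[z^{n-1}\bS_k(\tau)]_+\Upsilon)$, i.e.\ divisible by the equivariant parameters (Lemma \ref{lem:reparametrization}, which uses the linear relation of Remark \ref{rem:linear_relation} and Proposition \ref{prop:shift}(3)); and (b) the converse direction, that the classes $\lambda_i v_{l,p}$ span the kernel of the non-equivariant limit and that the resulting topologically nilpotent vector field can be integrated inside $J\cG$. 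Neither step appears in your proposal, so the claim that the various $\exp(\frs^*F/z)\zeta_{(\frs,\frf)}$ lie in a single $J\cG$-orbit is unsupported as written.
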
 

We observe that reparametrizations of the mirror form an infinite-dimensional 
formal group $J\cG$.  
The group $J\cG$ reduces the equivariant theory to the non-equivariant one: 
in terms of Givental's Lagrangian cone \cite{Givental:symplectic}, 
the non-equivariant Givental cone can be regarded as 
the orbit space of the equivariant Givental cone 
under a $J\cG$-action (see Theorem \ref{thm:orbit_space} and 
Remark \ref{rem:cone_quotient}). 

The mirror map and primitive forms can be calculated concretely 
in terms of the following hypergeometric series, 
called the \emph{extended $I$-function} \cite{CCIT:mirrorthm}: 
\[
I(\by,z) = z e^{\sum_{i=1}^m u_i \log y_i/z} 
\sum_{\ell \in \hLL_\eff} \by^\ell Q^{d(\ell)}
\left( \prod_{i=1}^m 
\frac{\prod_{c=-\infty}^0 (u_i+ cz)}
{\prod_{c=-\infty}^{\ell_{b_i}}(u_i+cz)} \right) 
\frac{1}{\prod_{k \in G} \ell_k! z^{\ell_k}}. 
\]
where $\hLL_\eff$ is the index set defined in \S \ref{subsec:mirrormap} 
and $G = (\bN \cap |\Sigma|) \setminus \{b_1,\dots,b_m\}$. 
We deduce the following theorem from basic properties of shift operators, 
without relying on the mirror theorem \cite{CCIT:mirrorthm} 
for the extended $I$-function. 
\begin{theorem}[Corollary \ref{cor:primitive}] 
We set $y_k(z) = y_k + \sum_{n=1}^\infty y_{k,n} z^n$ and 
$\by(z) = \{y_k(z) : k\in \bN \cap |\Sigma|\}$. 
The primitive form of the equivariant mirror 
is given by $\omega(\by^+)$ for $\by^+$ such that 
one has $I(\by(z),z) = z( 1 + O(z^{-1}))$. 
Moreover, for such $\by^+$, the asymptotics $I(\by(z),z) = z + \tau(\by) + O(z^{-1})$ 
determines the mirror map $\tau(\by)$. 
\end{theorem}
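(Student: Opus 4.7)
The plan is to identify the extended $I$-function, viewed as a formal series in $\by$, $\by^+$ and $z$, with the mirror image of the family $\omega(\by^+)$ paired against the canonical cyclic vector of the equivariant quantum connection. Its asymptotic expansion at $z=\infty$ then captures both $\Upsilon(\by,\by^+,z)$ and the mirror map $\tau(\by)$ simultaneously. The key observation is that the hypergeometric Pochhammer factors in $I$ encode precisely the iterated action of the shift operators $\bS_k$ on the identity class in equivariant quantum cohomology, which is exactly the action dictating $\Upsilon$ in Theorem \ref{thm:y+}.

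First I would bring in the equivariant $J$-function $J(\tau,z)=1+\tau/z+O(z^{-2})$, the canonical asymptotic fundamental solution to the equivariant quantum connection at $\tau$. Under the mirror isomorphism of Theorem \ref{thm:main_introd}, Proposition \ref{prop:mirror_map} and Theorem \ref{thm:y+} characterize the mirror image of $\omega(\by^+)$ by its differential equations in $\bS_k$ together with asymptotic initial conditions, and the same data together with the normalization of $J$ uniquely determine the combination $z\cdot\Upsilon(\by,\by^+,z)\cdot J(\tau(\by),z)$. The core identity I would target is therefore
\[
I(\by(z),z) = z\cdot \Upsilon(\by,\by^+,z)\cdot J(\tau(\by),z).
\]

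The second step is to verify this identity by matching Pfaffian systems. Differentiating the hypergeometric sum defining $I$ with respect to $y_{k,n}$ shifts the summation index $\ell_k$ (or $\ell_{b_i}$ when $k=b_i$) and produces the explicit ratio of Pochhammer factors $(u_i+cz)^{\pm 1}$ together with the combinatorial factor $1/(\ell_k!\,z^{\ell_k})$ and a prefactor $z^n$ coming from $y_k(z)=y_k+\sum_{n\ge 1}y_{k,n}z^n$. On the right-hand side, the same differentiation is produced by $[z^{n-1}\bS_k(\tau(\by))]_+$ via the explicit shift-operator formula used earlier in the paper, which gives exactly the same Pochhammer expression on the $u_i$. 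Together with the matching initial condition at $\by=\by^+=0$, $Q=0$, where both sides reduce to $z\cdot e^{\sum u_i\log y_i/z}$, uniqueness of the solution to the resulting Pfaffian system in formal power series gives the identity above.

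With this identity established, both claims of the corollary follow immediately. The condition $I(\by(z),z) = z(1+O(z^{-1}))$ is equivalent to $\Upsilon(\by,\by^+,z)\cdot J(\tau(\by),z) = 1+O(z^{-1})$; since $J=1+O(z^{-1})$ already and $\Upsilon\in H_T^*(X_\Sigma)[z][\![Q,\by,\by^+]\!]$ is polynomial in $z$, the asymptotic initial condition singled out in Theorem \ref{thm:y+} forces $\Upsilon=1$, and Corollary \ref{cor:primitive_Upsilon} then identifies $\omega(\by^+)$ as a primitive form. Specializing to this $\by^+$ reduces the identity to $I(\by(z),z)=z\cdot J(\tau(\by),z)=z+\tau(\by)+O(z^{-1})$, which recovers $\tau(\by)$ as the $z^0$-coefficient. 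The main obstacle is the explicit matching of differential equations in the second step, which requires comparing the combinatorial expansion of each Pochhammer factor in $I$ with the shift-operator action on $1$; this is technical but essentially formal given the shift-operator machinery developed earlier, and once it is in place all remaining assertions follow from uniqueness of asymptotic solutions.
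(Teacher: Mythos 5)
Your overall strategy is the paper's: establish that $I(\by(z),z)$ equals $z$ times the image of $\omega(\by^+)$ under the fundamental solution of the quantum connection, and then read off both claims from the $z\to\infty$ asymptotics together with Corollary \ref{cor:primitive_Upsilon}. The gap is in your core identity. You state it as a \emph{product}, $I(\by(z),z)=z\cdot\Upsilon(\by,\by^+,z)\cdot J(\tau(\by),z)$ with $J(\tau,z)=1+\tau/z+O(z^{-2})$ a cohomology class, whereas the correct statement (Proposition \ref{prop:verybig_I}) is $I(\by(z),z)=zM(\tau(\by),z)\Upsilon(\by,\by^+,z)$, where $M(\tau,z)$ is the fundamental solution \emph{operator} \eqref{eq:fundsol}. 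These are not the same for general $\Upsilon$: $M(\tau,z)$ inserts its argument at a marked point inside descendant correlators and is not multiplication by the class $J(\tau,z)=zM(\tau,z)1$. Consequently the product $\Upsilon\cdot J$ does not satisfy the linear equation $\partial_{y_{k,n}}\bbf=z^{n-1}\cS_k\bbf$ that the left-hand side satisfies, so your ``matching of Pfaffian systems'' cannot close. The same confusion appears when you say the right-hand side is differentiated by $[z^{n-1}\bS_k(\tau(\by))]_+$: that truncated operator governs $\Upsilon$ alone; after composing with $M(\tau,z)$ the truncation is absorbed and one gets the untruncated localization operator $\cS_k$ (this is Proposition \ref{prop:shift}(1) combined with the quantum-connection part of the flow in Theorem \ref{thm:y+}), and it is precisely this conversion that makes the comparison with the hypergeometric index-shift computation for $I$ (Proposition \ref{prop:ext_I}) work.

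Once the identity is put in operator form, the rest of your argument is correct and coincides with the paper's: since $M(\tau,z)=\id+O(z^{-1})$ is invertible and $\Upsilon$ is polynomial in $z$, the condition $I(\by(z),z)=z(1+O(z^{-1}))$ forces $\Upsilon(\by,\by^+,z)=1$, which is the primitive-form condition by Corollary \ref{cor:primitive_Upsilon}; then $I(\by(z),z)=zM(\tau(\by),z)1=z+\tau(\by)+o(1)$ by \eqref{eq:J_asymptotics}. For the first claim the distinction between $M(\tau,z)\Upsilon$ and $\Upsilon\cdot J$ happens to be invisible because $\Upsilon=1$ there, but the identity you set out to prove for general $\by^+$ is false as written, and repairing it is exactly the content of Proposition \ref{prop:verybig_I}.
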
 

The primitive form in this paper is given as a formal power series in the parameters $\by$, 
and should be thought of as a ``formal primitive form'' 
in the sense of Li-Li-Saito \cite{LiLiSaito:polyvector} 
(see also \cite{SaitoM:Brieskorn_II}). 
Note that our primitive form is defined over the Novikov ring.  

Cho-Oh \cite{Cho-Oh} and Fukaya-Oh-Ohta-Ono 
\cite{FOOO:toricI,FOOO:toricII,FOOO:toric_mirrorsymmetry} 
constructed the Landau-Ginzburg potential as a generating function 
of open Gromov-Witten invariants. 
Their potentials were computed for compact semi-positive 
toric manifolds by Chan-Lau-Leung-Tseng \cite{CLLT:Seidel} 
via Seidel representation (see also \cite{Gonzalez-Iritani:potential}). 
It is natural to ask if our inverse mirror map $\tau \mapsto 
y_k(\tau)$ (and the function $\tau \mapsto \by^+(\tau)$ 
giving the primitive form) is a generating function of 
certain open Gromov-Witten invariants. 
We would like to draw attention to the related approach of 
Gross \cite{Gross:tropical_P2} using tropical disc counting and that 
of Gonz\'alez-Woodward \cite{Gonzalez-Woodward:tmmp} 
using quantum Kirwan maps. 
We also remark that the present approach via Seidel representation 
is closely related to the viewpoint of Teleman \cite{Teleman:gauge_mirror} 
on mirror symmetry. 

\begin{remark} 
\label{rem:powerseries_y}
When we talk about formal power series in $\by$ and $\by^+$, 
we always mean a function on the formal neighbourhood of the base point given by 
$y_{b_1} = \cdots = y_{b_m} = 1$, $y_k =0$ for 
$k\in G:=(\bN\cap|\Sigma|) \setminus \{b_1,\dots,b_m\}$ and 
$y_{k,n} = 0$ for all $k\in \bN\cap|\Sigma|$ and $n\ge 1$. 
This base point corresponds to the original potential 
\eqref{eq:undeformed_potential}. 
\end{remark} 

\subsection*{Acknowledgements} 
I thank Eduardo Gonz\'alez and Si Li for very helpful discussions. 
Joint work \cite{Gonzalez-Iritani:Selecta} 
with Eduardo Gonz\'alez was a starting point of the present research.  
Si Li suggested me to consider deformations 
of $F(x)$ by infinitely many monomials. 
This research is supported by JSPS Kakenhi Grant Number 
16K05127, 16H06337, 
25400069, 26610008, 23224002, 25400104.

\subsection*{Notation} 

\begin{itemize} 
\item $T\cong (\C^\times)^{\dim T}$ is an algebraic torus; 
we write $\hT := T\times \C^\times$;  
\item $X$ is a smooth $T$-variety (satisfying the assumption in 
\S \ref{sec:generalities}); 
\item $X_\Sigma$ is a smooth toric variety 
associated to a fan $\Sigma$; 
in this case $T \cong (\C^\times)^{\dim X_\Sigma}$;  
\item $(\lambda,z) \in \Lie(T) \times \Lie(\C^\times) = \Lie(\hT)$ denote 
variables on the Lie algebra; 
\item $H^*_\hT(\pt) \cong \C[\lambda,z]$ is the ring  
of polynomial functions on $\Lie(\hT)$; 
\item $H^*_\hT(X) = H^*_T(X)[z]$ where $\hT$ acts on $X$ 
via the projection $\hT \to T$; 
\item $H^*_{\hT}(X)_\loc := H^*_{\hT}(X)\otimes_{H^*_\hT(\pt)} 
\Frac(H^*_\hT(\pt)) = H^*_T(X)\otimes_{H_T^*(\pt)} 
\Frac(H^*_T(\pt)[z])$; 
\item All (co)homology groups have $\C$ coefficients 
unless otherwise specified;  
\item $\Psi(k)$ is given in Notation \ref{nota:Psi}; 
\item $y_k$ is a variable associated to a lattice point $k\in |\Sigma|$; 
$y_i = y_{b_i}$ for $1\le i\le m$. 
\end{itemize} 

\section{Shift operators in Equivariant Gromov-Witten Theory}
\label{sec:generalities} 
In this section we recall basic definitions of equivariant Gromov-Witten 
invariants and shift operators. 
Shift operators first appeared in the work of Okounkov-Pandharipande 
\cite{Okounkov-Pandharipande:Hilbert} for quantum cohomology of 
Hilbert schemes of points on $\C^2$; they are also studied  
by Braverman-Maulik-Okounkov \cite{BMO:Springer}, 
Maulik-Okounkov \cite{Maulik-Okounkov} and the author 
\cite{Iritani:shift}. 
Let $T\cong (\C^\times)^{\dim T}$ be an algebraic torus. 
Let $X$ be a smooth variety over $\C$ equipped with 
an algebraic $T$-action. We assume the following conditions. 
\begin{itemize} 
\item $X$ is semi-projective, i.e.~the natural map 
$X \to \Spec(H^0(X,\cO))$ is projective. 
\item The set of $T$-weights of $H^0(X,\cO)$ is contained in 
a strictly convex cone in $\Hom(T,\C^\times)\otimes \R$ 
and $H^0(X,\cO)^T = \C$. 
\end{itemize} 
In this paper we only need the case where 
$X$ is a toric variety, but 
the shift operator makes sense for general $X$ as above. 
The above conditions ensure that the 
$T$-fixed set $X^T$ is projective, and also that 
$X$ is equivariantly formal, i.e.~$H^*_T(X)$ is a 
free $H_T^*(\pt)$-module and one has a (non-canonical) 
isomorphism 
$H^*_T(X) \cong H^*(X) \otimes_\C H^*_T(\pt)$ 
as an $H_T^*(\pt)$-module, 
see \cite[Proposition 2.1]{Iritani:shift}. 
These conditions make equivariant Gromov-Witten invariants 
well-defined and ensure the existence of a non-equivariant limit for quantum 
cohomology. 

\subsection{Formal power series ring} 
\label{subsec:power_series}
Let $\Eff(X)\subset H_2(X,\Z)$ denote the semigroup generated by 
effective curves. For a module (or a ring) $M$, we write $M[\![Q]\!]$
for the space of formal power series of the form: 
\[
\sum_{d\in \Eff(X)} a_d Q^d, \qquad  a_d \in M.  
\]
Here $Q$ is a formal parameter called the \emph{Novikov variable}. 
For a countable set $\bx = \{x_1,x_2,x_3,\dots\}$ of variables, 
the space $M[\![\bx]\!]$ of formal power series in $\bx$ 
with coefficients in $M$ consists of formal sums of the form: 
\[
\sum_{I} a_I \bx^I, \qquad a_I \in M 
\]
where the index $I$ ranges over all sequences $(i_1,i_2,i_3,\dots)$ 
of non-negative integers such that $\sum_{n=1}^\infty i_n<\infty$ 
and we set $\bx^I = \prod_{n=1}^\infty x_n^{i_n}$. 
The space $M[\![\bx]\!]$ can be also described as 
the projective limit of the spaces $M[\![x_1,\dots,x_n]\!]$. 
Note that we shall abuse notation 
when we use the variable $\by$ and write $M[\![\by]\!]$, 
see \S\ref{subsec:mirrormap} and Remark \ref{rem:powerseries_y}. 

Recall that a topology on a module (or ring) is said to be linear 
if the fundamental neighbourhood system of $0$ is given by 
submodules (resp.~ ideals). 
Let $M$ be a linearly topologized module (or ring) 
and let $\{M_\nu \subset M\}$ 
denote the fundamental neighbourhood system of $0$. 
The topology on $M[\![\bx]\!]$ is defined by  
the following fundamental neighbourhood system of $0$: 
\[
M[\![\bx]\!]_{\nu,\cI} = 
\left\{ \sum_{I} a_I x^I: \text{$a_I \in M_\nu$ 
for all $I \in \cI$} \right\}  
\]
where $\cI$ ranges over all finite sets of exponents $I$. 
The topology on $M[\![Q]\!]$ is defined similarly. 
The convergence in $M[\![\bx]\!]$ (or in $M[\![Q]\!]$) 
is the coefficient-wise convergence: a sequence in $M[\![\bx]\!]$ 
converges if and only if the coefficient of $x^I$ converges in $M$ 
for each $I$. 
When $M$ is complete, 
the spaces $M[\![Q]\!]$, $M[\![\bx]\!]$ are also complete. 

\subsection{Quantum cohomology and quantum connection} 
For equivariant cohomology classes 
$\alpha_1,\dots,\alpha_n \in H^*_T(X)$, 
$d\in H_2(X,\Z)$ and 
non-negative integers $c_1,\dots,c_n\in \Z_{\ge 0}$, 
we have \emph{equivariant Gromov-Witten invariants}  
\[
\corr{\alpha_1\psi^{c_1},\alpha_2 \psi^{c_2},\dots,\alpha_n\psi^{c_n}
}_{0,n,d}^{X,T} = \int_{[X_{0,n,d}]_{\vir}} 
\prod_{i=1}^n \psi_i^{c_i}\ev_i^*(\alpha_i)   
\]
taking values in the fraction ring $\Frac(H^*_T(\pt))$ of  
$H^*_T(\pt)$. Here $X_{0,n,d}$ 
denotes the moduli space of genus-zero stable maps to $X$ 
of degree $d$ and with $n$ markings, $[X_{0,n,d}]_{\vir}$ 
denotes its virtual fundamental class, $\ev_i \colon X_{0,n,d} \to X$ 
is the evaluation map at the $i$th marking and $\psi_i$ is the 
first Chern class of the universal cotangent line bundle at the $i$th marking. 
When $X$ is not proper, the right-hand side 
is defined by the virtual localization formula 
\cite{Atiyah-Bott, Graber-Pandharipande} and thus belongs to $\Frac(H^*_T(\pt))$. 

The \emph{equivariant quantum product} $\star_\tau$ with 
$\tau \in H^*_T(X)$ is given by 
\[
(\alpha\star_\tau\beta, \gamma) = \sum_{n\ge 0} 
\sum_{d\in \Eff(X)} \frac{Q^d}{n!} 
\corr{\alpha,\beta,\gamma,\tau,\dots,\tau}_{0,n+3,d}^{X,T} 
\]
for $\alpha,\beta,\gamma\in H^*_T(X)$, 
where $(\alpha,\beta) = \int_X \alpha\cup\beta$ is the equivariant 
Poincar\'{e} pairing taking values in $\Frac(H^*_T(\pt))$. 
Let $T_0,\dots,T_N$ be a basis of $H^*_T(X)$ 
over $H^*_T(\pt)$ and write $\tau = \sum_{i=0}^N \tau^i T_i$. 
The product $\star_\tau$ defines a commutative ring structure on 
\[
H_T^*(X)[\![Q]\!][\![\tau]\!] := H_T^*(X)[\![Q]\!][\![\tau^0,\dots,\tau^N]\!]. 
\]
Notice that the product $\alpha\star_\tau \beta$ lies in 
$H^*_T(X)[\![Q]\!][\![\tau^0,\dots,\tau^N]\!]$, i.e.~we do not 
need to invert equivariant parameters. 
This follows from our assumption that $X$ is semi-projective 
(see \cite[\S 2.3]{Iritani:shift}). 

The \emph{quantum connection} $\nabla$ is a pencil of flat connections 
on the tangent bundle $T H_T^*(X) = H_T^*(X) \times H_T^*(X)$ 
of $H_T^*(X)$ defined by 
\[
\nabla_\alpha = \partial_\alpha + z^{-1} (\alpha\star_\tau)  
\]
where $z$ is the pencil parameter, $\tau\in H^*_T(X)$ denotes 
a point on the base, $\alpha\in H^*_T(X)$ 
and $\partial_\alpha$ denotes the directional derivative. 
This is known to be flat, and admits a fundamental solution 
$M(\tau,z)\in \End(H^*_T(X))[\![z^{-1}]\!][\![Q]\!][\![\tau]\!]$ 
such that 
\[
\partial_\alpha \circ M(\tau,z) = M(\tau,z) \circ \nabla_\alpha. 
\]
In this paper, we use the following fundamental solution  
\cite[\S 1]{Givental:elliptic}, 
\cite[Proposition 2]{Pandharipande:afterGivental}:  
\begin{equation} 
\label{eq:fundsol} 
(M(\tau,z) \alpha,\beta) = 
(\alpha,\beta) + 
\sum_{\substack{d\in \Eff(X), n\ge 0 \\ (d,n) \neq (0,0)}} 
\frac{Q^d}{n!} 
\corr{\alpha, \tau,\dots,\tau, \frac{\beta}{z-\psi}}_{0,n+2,d}^{X,T} 
\end{equation} 
where $1/(z-\psi)$ should be expanded in power series 
$\sum_{n=0}^\infty \psi^n z^{-n-1}$. 
We regard the pencil parameter $z$ as an equivariant 
parameter for an additional $\C^\times$. 
Set $\hT = T \times \C^\times$ 
and consider the $\hT$-action on $X$ induced by the natural 
projection $\hT \to T$.  
By the localization method, we find that $M(\tau,z)$ defines 
an operator
\[
M(\tau,z) \colon H^*_\hT(X) [\![Q]\!][\![\tau]\!] \to 
H^*_\hT(X)_\loc[\![Q]\!] [\![\tau]\!]. 
\] 
where $H^*_\hT(X)_\loc= H^*_\hT(X) \otimes_{H_\hT^*(\pt)} 
\Frac(H_\hT^*(\pt))$ is called the \emph{Givental space}. 

\subsection{Shift operators}
\label{subsec:shift} 
For a cocharacter $k\colon \C^\times \to T$ of $T$, 
we say that $k$ is \emph{semi-negative}  
if $k$ pairs with every $T$-weight of $H^0(X,\cO)$ 
non-positively. Here we adopt the convention that $T$ acts on a function 
$f \in H^0(X,\cO)$ as $(t \cdot f)(x) := f(t^{-1} x)$. 
We consider a shift operator associated 
to a semi-negative cocharacter. 

For a cocharacter $k$ of $T$, we consider the space 
\begin{equation} 
\label{eq:associated_bundle} 
E_k := \left(X \times \left( \C^2\setminus \{(0,0)\}\right)\right) 
\big/\C^\times 
\end{equation} 
where $\C^\times$ acts on $X \times \C^2$ by 
$s \cdot (x, (v_1,v_2)) = (s^k x, ( s^{-1}v_1, s^{-1}v_2))$ 
and $s^k$ denotes the image of $s\in \C^\times$ under $k$. 
The space $E_k$ is a fiber bundle over $\PP^1$ with fiber $X$. 
The group $\hT = T\times \C^\times$ acts on $E_k$ by 
$(t,u) \cdot [x, (v_1,v_2)] = [tx, (v_1,uv_2)]$. 
Let $X_0$ denote the fiber of $E_k$ at $[1,0]\in \PP^1$ 
and $X_\infty$ denote the fiber of $E_k$ at $[0,1]\in \PP^1$. 
Note that the induced $\hT$-actions on $X_0$ and $X_\infty$ are given by 
\begin{align*} 
(t,u) \cdot x &= t \cdot x  && \text{for $x\in X_0$;} \\ 
(t,u) \cdot x & =t u^k \cdot x  && \text{for $x\in X_\infty$.}
\end{align*} 
We have an isomorphism $\Phi_k \colon H^*_\hT(X_0) \cong 
H_\hT^*(X_\infty)$ induced by the identity map $X_0\cong X_\infty$ 
and the group automorphism $\hT \to \hT$, $(t,u) \mapsto (t u^k, u)$. 
Notice that $\Phi_k$ is not a homomorphism of $H_\hT^*(\pt)$-modules  
but satisfies the property: 
\[
\Phi_k(f(\lambda,z) \alpha) = f(\lambda + k z, z) \Phi_k(\alpha) 
\]
for $f(\lambda,z) \in H^*_\hT(\pt)$. Here $\lambda \in \Lie(T)$ 
and $z\in \Lie(\C^\times)$ are equivariant parameters for 
$T$ and $\C^\times$ respectively and we identify $H^*_\hT(\pt)$ 
with the ring of polynomial functions on $\Lie(T)\times \Lie(\C^\times)$. 
We have an isomorphism \cite[Lemma 3.7]{Iritani:shift} 
\[
H^*_\hT(E_k) \cong \left\{(\alpha,\beta) \in H^*_\hT(X_0) \oplus
H^*_\hT(X_\infty) : \alpha  - \Phi_k^{-1} \beta \equiv 0 \mod z \right\} 
\]
which sends $\gamma$ to $(\gamma|_{X_0},\gamma|_{X_\infty})$. 
We write $\htau \in H^*_\hT(E_k)$ for the lift of $\tau \in H^*_T(X)$  
such that $\htau|_{X_0} = \tau$ and $\htau|_{X_\infty} = \Phi_k(\tau)$. 
The assignment $\tau \mapsto \htau$ is not $H^*_T(\pt)$-linear. 

Let $H_2^\sec(E_k,\Z)$ denote the subset of $H_2(E_k,\Z)$ 
consisting of section classes $\hd \in H_2(E_k,\Z)$ such that 
$\pi_*(\hd)=[\PP^1]$, where 
$\pi \colon E_k \to \PP^1$ is the natural projection. 
We set $\Eff(E_k)^\sec := \Eff(E_k) \cap H_2^\sec(E_k,\Z)$. 
Consider the $\C^\times$-action on $X$ induced by 
$k\colon \C^\times \to T$ and the $T$-action on $X$. 
To each $\C^\times$-fixed point $x\in X$, we can associate a 
section $\sigma_x = \{x\}\times \PP^1$ of $\pi \colon E_k\to \PP^1$. 
When $k$ is semi-negative, 
there exists a unique connected component $F_{\min}$ of the 
$\C^\times$-fixed set $X^{\C^\times}$ such that $\C^\times$-action  
has only positive weights on the normal bundle of $F_{\min}$ 
in $X$ (see \cite[\S 3.2]{Iritani:shift}). 
The section class associated to a fixed point in $F_{\min}$
is called the \emph{minimal section class} 
and is denoted by $\sigma_{\min}(k)$.

\begin{lemma}[{\cite[Lemma 3.5, Lemma 3.6]{Iritani:shift}}] 
Let $k$ be a semi-negative cocharacter of $T$. Then 
$E_k$ is semi-projective and 
$\Eff(E_k)^\sec = \sigma_{\min}(k) + \Eff(X)$.  
\end{lemma}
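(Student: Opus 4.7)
My plan is to prove the two assertions — semi-projectivity of $E_k$ and the description of $\Eff(E_k)^\sec$ — separately, using the GIT presentation of $E_k$ and equivariant degeneration arguments.

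For the semi-projectivity of $E_k$, I would exploit the GIT description $E_k = [X \times (\C^2 \setminus \{0\})]/\C^\times$. The ring $H^0(E_k, \cO_{E_k})$ is the $\C^\times$-invariant subring of $H^0(X, \cO_X) \otimes \C[v_1, v_2]$ (using Hartogs on $\C^2 \setminus \{0\}$). Under the semi-negativity hypothesis, a monomial $f \otimes v_1^{a_1} v_2^{a_2}$ with $f$ of $T$-weight $\mu$ has $\C^\times$-weight $-\langle k,\mu\rangle - a_1 - a_2$ where $\langle k,\mu\rangle \le 0$, so the invariant subring $R$ is nontrivial, finitely generated, and its $T$-weights lie in the same strictly convex cone as those of $H^0(X, \cO_X)$. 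The morphism $E_k \to \Spec R$ factors through $\pi \colon E_k \to \PP^1$; covering $\PP^1$ by the two standard affine charts, on each of which $E_k$ is the trivial bundle $X \times \mathbb{A}^1$, projectivity is inherited from the semi-projectivity of $X$.

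For the equality $\Eff(E_k)^\sec = \sigma_{\min}(k) + \Eff(X)$, the inclusion $\sigma_{\min}(k) + \Eff(X) \subseteq \Eff(E_k)^\sec$ is immediate: any $d \in \Eff(X)$ is represented by an effective cycle pushed forward to $X_0 \hookrightarrow E_k$, and summed with the trivial section $\sigma_{x_*}$ for $x_* \in F_{\min}$ gives an effective representative of $\sigma_{\min}(k) + d$. The reverse inclusion I would prove by degeneration: given an effective section class $\hd$, represent it by an algebraic cycle in $E_k$ and take the flat limit under the $\C^\times$-action on $E_k$ induced from the second factor of $\hT$ (acting on the $\PP^1$ base of $\pi$). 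The result is a $\C^\times$-invariant cycle of class $\hd$, whose irreducible components are either one-dimensional $\C^\times$-orbit closures $\overline{\sigma}_x$ (each a section meeting $X_0$ at $x$ and $X_\infty$ at $\lim_{s\to 0}k(s)\cdot x$), or effective fiber cycles in $X_0$ or $X_\infty$. Thus $\hd = [\overline{\sigma}_x] + d_0 + d_\infty$ for some $x \in X$ and $d_0, d_\infty \in \Eff(X)$.

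The main obstacle is then to verify that $[\overline{\sigma}_x] - [\sigma_{\min}(k)] \in \Eff(X)$ for every such $x$. For $x$ in the open Białynicki–Birula stratum attracted to $F_{\min}$, the endpoint $\lim_{s\to 0}k(s)\cdot x$ lies in $F_{\min}$, and a local flat deformation $x \rightsquigarrow x_* \in F_{\min}$ degenerates $\overline{\sigma}_x$ to $\sigma_{x_*} = \sigma_{\min}(k)$ plus effective fiber bubbles, yielding the desired class identity. The general case, where $x$ lies in a higher BB stratum $F_i$, would be handled by induction on the stratification ordered by the Morse filtration of the $k$-action: each stratum is connected to $F_{\min}$ by a chain of $k$-invariant curves whose classes assemble into the required effective fiber correction. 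Making this induction and the attendant bubbling analysis precise — in particular ensuring that the class identities are preserved at each degeneration step despite possible non-reducedness of limit cycles — is the most delicate part of the argument, and is the step where I would expect to invest the most technical effort.
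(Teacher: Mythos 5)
Note first that the paper offers no proof of this lemma---it is imported verbatim from \cite{Iritani:shift} (Lemmas 3.5 and 3.6)---so your attempt must be judged on its own. Your argument for semi-projectivity has a genuine error. The map $E_k\to\Spec H^0(E_k,\cO)$ cannot literally factor through $\pi\colon E_k\to\PP^1$ (a morphism from $\PP^1$ to an affine scheme is constant, while each fibre of $\pi$ surjects onto $\Spec H^0(X,\cO)$), and, more seriously, covering $E_k$ by the two opens $X\times\mathbb{A}^1$, each projective over its own affinization, does not imply that $E_k$ is projective over $\Spec H^0(E_k,\cO)$: properness is not local on the source. Concretely, for $X=\C$ with the standard scaling action the cocharacter $k=1$ is semi-negative and $E_{1}\cong\mathrm{Bl}_0\C^2$ is semi-projective, whereas $k=-1$ is not semi-negative and $E_{-1}$ is the total space of $\cO_{\PP^1}(1)$, which has $H^0(E_{-1},\cO)=\C$ and is not semi-projective; yet both are covered by two charts isomorphic to $\C\times\mathbb{A}^1$. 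Since your argument uses semi-negativity only to show that the invariant ring is nontrivial, it would ``prove'' semi-projectivity of $E_{-1}$ as well. The hypothesis must enter the properness argument itself, e.g.\ through the signs of the $\C^\times$-weights on $H^0(E_k,\cO)$ together with a valuative or relatively-ample-bundle argument.

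For the computation of $\Eff(E_k)^{\sec}$, your skeleton (split off the unique irreducible component of degree one over $\PP^1$, degenerate it to an invariant section plus fibre classes, then compare $[\sigma_x]$ with $\sigma_{\min}(k)$) is indeed the shape of the cited proof, but two essential points are missing. First, flat limits of cycles under a $\C^\times$-action need not exist in a non-proper variety; one must know that $E_k$ is proper over its affinization and that the induced action on $\Spec H^0(E_k,\cO)$ has limits as the parameter tends to $0$ (all weights of one sign), which is exactly where the semi-projectivity of $E_k$ and the semi-negativity of $k$ are consumed---and the same issue recurs for the limits $\lim_s s^k\cdot x$ underlying your Bia{\l}ynicki-Birula stratification. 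Second, the step you yourself flag as delicate, namely $\sigma_i(k)-\sigma_{\min}(k)\in\Eff(X)$ for every fixed component $F_i$ via chains of invariant curves, is the actual content of \cite{Iritani:shift} Lemma 3.6 and is not established by your sketch. As it stands the proposal is a reasonable plan with the hardest steps left open, not a proof.
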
 

\begin{definition}[shift operator] 
Let $k\colon\C^\times \to T$ be a semi-negative cocharacter. 
For $\tau \in H^*_T(X)$, 
define a map $\tbS_k(\tau) \colon H_\hT^*(X_0)[\![Q]\!] 
\to H_\hT^*(X_\infty)[\![Q]\!]$ 
by 
\[
\left(\tbS_k(\tau) \alpha, \beta\right) 
= \sum_{n=0}^\infty \sum_{\hd\in \Eff(E_k)^\sec}  
\frac{Q^{\hd-\sigma_{\min}(k)}}{n!} 
\corr{\iota_{0*}\alpha, \htau,\dots,\htau, \iota_{\infty*}\beta
}_{0,n+2,\hd}^{E_k,\hT}  
\]
where $\alpha \in H_\hT^*(X_0)$, $\beta \in H_\hT^*(X_\infty)$ 
and $\iota_0 \colon X_0 \to E_k$, $\iota_\infty \colon X_\infty \to E_k$ 
are the natural inclusions. 
We define $\bS_k(\tau) := \Phi_k^{-1} \circ \tbS_k(\tau) 
\colon H_\hT^*(X_0)[\![Q]\!] \to H_\hT^*(X_0)[\![Q]\!]$. 
Note that $\tbS_k(\tau),\bS_k(\tau)$ are defined 
without inverting equivariant parameters, which again follows from the fact that 
$E_k$ is semi-projective, see \cite[Remark 3.10]{Iritani:shift}.  
Note also that $\tbS_k(\tau)$ is $H_\hT^*(\pt)$-linear, but 
$\bS_k(\tau)$ satisfies $\bS_k(\tau)( f(\lambda,z) \alpha) 
= f(\lambda -kz, z) \bS_k(\tau) \alpha$ for 
$f(\lambda,z) \in H^*_\hT(\pt)$ and 
$\alpha \in H^*_\hT(X)$. 
\end{definition} 

We also introduce a (constant) shift operator acting on 
the Givental space $H_\hT^*(X)_\loc$. 
\begin{definition}[shift operator on $H_\hT^*(X)_\loc$] 
\label{def:shift_Givental} 
Let $k\colon \C^\times \to T$ be a semi-negative cocharacter. 
Let $X^T = \bigsqcup_i F_i$ denote the decomposition of 
the $T$-fixed set $X^T$ into connected components. 
Let $N_i$ be the normal bundle of $F_i$ in $X$ and 
let $N_i = \bigoplus_\alpha N_{i,\alpha}$ be the decomposition 
into $T$-eigenbundles, where $T$ acts on $N_{i,\alpha}$ by 
the weight $\alpha \in \Hom(T,\C^\times)$. 
We write $c(N_{i,\alpha}) = \prod_{j=1}^{\rank(N_{i,\alpha})} 
(1 + \rho_{i,\alpha,j})$ with $\rho_{i,\alpha,j}$ being the 
virtual Chern roots of $N_{i,\alpha}$. 
Let $\sigma_i(k) \in H_2(E_k,\Z)^\sec$ 
denote the section class of $E_k$ given by a $T$-fixed point in $F_i$. 
We set 
\[
\Delta_i(k) :=Q^{\sigma_i(k) - \sigma_{\min}(k)} 
\prod_{\alpha} \prod_{j=1}^{\rank (N_{i,\alpha})} 
\frac{\prod_{c=-\infty}^0 (\rho_{i,\alpha,j}  + \alpha + cz)} 
{\prod_{c=-\infty}^{-\alpha \cdot k} (\rho_{i,\alpha,j} + \alpha + cz)}. 
\]
Using the localization isomorphism \cite{Atiyah-Bott} 
\[
\iota^* \colon H^*_\hT(X)_\loc \cong 
H^*_\hT(X^T)_\loc = 
\bigoplus_i H^*(F_i) \otimes_\C \Frac(H^*_\hT(\pt))
\]
given by the restriction to the fixed set $X^T$, we define 
the operator $\cS_k \colon H_\hT^*(X)_\loc \to H_\hT^*(X)_\loc$ by 
the commutative diagram: 
\[
\xymatrix{
H_\hT^*(X)_\loc \ar[rrr]^{\cS_k} \ar[d]_{\iota^*} 
& & & H_\hT^*(X)_\loc \ar[d]_{\iota^*} \\ 
H_\hT^*(X^T)_\loc 
\ar[rrr]^{\bigoplus_{i} \Delta_i(k) e^{-k z \partial_\lambda}} 
&  & & 
H_\hT^*(X^T)_\loc 
}
\]
where $e^{-k z\partial_\lambda}$ acts on 
$\Frac(H_\hT^*(\pt))= \C(\lambda,z)$ 
by the shift of equivariant parameters $f(\lambda,z) \mapsto 
f(\lambda - k z, z)$. 
\end{definition} 

\begin{proposition}[\cite{BMO:Springer, Maulik-Okounkov}, 
{\cite[Theorem 3.14, Corollaries 3.15, 3.16]{Iritani:shift}}]  
\label{prop:shift} 
Let $k,l$ be semi-negative cocharacters of $T$. 
We have the following properties. 
\begin{enumerate}
\item $M(\tau,z) \circ \bS_k(\tau) = \cS_k \circ M(\tau,z)$ 
\item The shift operators commute with the quantum connection, 
i.e.~$[\nabla_\alpha, \bS_k(\tau)]=0$ for any $\alpha \in H^*_T(X)$.
\item We have $\bS_k(\tau) \circ \bS_l(\tau) = 
Q^{d(k,l)} \bS_{k+l}(\tau)$, 
$\cS_k \circ \cS_l = Q^{d(k,l)} \cS_{k+l}$ 
for some $d(k,l)\in H_2(X,\Z)$ which is symmetric in $k$ and $l$; 
in particular 
the shift operators commute each other: $[\bS_k(\tau), \bS_l(\tau)] 
= [\cS_k,\cS_l]=0$. 
\end{enumerate} 
\end{proposition}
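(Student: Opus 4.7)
The plan is to establish (1) by virtual $\C^\times$-localization on $E_k$, deduce (2) formally from (1), and prove (3) by a degeneration of the base $\PP^1$ of $E_{k+l}$.

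For (1), I introduce the auxiliary descendant series
\[
\Theta(\alpha,\beta) := \sum_{n\ge 0}\sum_{\hd}\frac{Q^{\hd-\sigma_{\min}(k)}}{n!}\left\langle\iota_{0*}\alpha,\,\htau,\dots,\htau,\,\frac{\iota_{\infty*}\beta}{z-\psi}\right\rangle_{0,n+2,\hd}^{E_k,\hT}
\]
where the last insertion is expanded as $\sum_{j\ge 0} z^{-j-1}\psi^j$. By the splitting axiom applied at a generic $X$-fiber of $E_k$, $\Theta(\alpha,\beta)$ equals the pairing $\bigl(M(\Phi_k\tau,z)\tbS_k(\tau)\alpha,\beta\bigr)$ on $X_\infty$, which translates through $\Phi_k$ to $\bigl(M(\tau,z)\bS_k(\tau)\alpha,\beta\bigr)$. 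On the other hand, applying virtual localization for the $\C^\times\subset\hT$-action on $E_k$ that rotates the base, a fixed stable map decomposes into a component over $\infty$ (carrying the descendant insertion), a component over $0$ (carrying $\iota_{0*}\alpha$), and a possibly multiply covered rigid section $\sigma_i$ through a $T$-fixed component $F_i$. The component over $\infty$ reproduces $M(\Phi_k\tau,z)$; the normal bundle contribution along $\sigma_i$, combined with the Novikov factor $Q^{\sigma_i(k)-\sigma_{\min}(k)}$, assembles exactly into $\Delta_i(k)$; and the comparison of $\hT$-weights between $X_0$ and $X_\infty$ produces the parameter shift $\lambda\mapsto\lambda-kz$ at $X_0$. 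Matching this with Definition \ref{def:shift_Givental} gives $M(\tau,z)\bS_k(\tau)=\cS_k M(\tau,z)$.

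Part (2) follows from (1) by a one-line computation: differentiating $M\bS_k=\cS_k M$ in a direction $\alpha\in H^*_T(X)$, using $\partial_\alpha M=z^{-1}M(\alpha\star_\tau)$ and the fact that $\cS_k$ depends only on $k$ (and in particular is independent of $\tau$), gives $\partial_\alpha\bS_k=z^{-1}[\bS_k,\alpha\star_\tau]$, equivalently $[\nabla_\alpha,\bS_k]=0$. For (3), I would degenerate the base $\PP^1$ of $E_{k+l}$ to a nodal union $\PP^1\cup_{\mathrm{pt}}\PP^1$, so that the total space degenerates to $E_k\cup_X E_l$ (with $X_\infty$ of $E_k$ glued to $X_0$ of $E_l$). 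The degeneration formula applied to the defining invariants of $\bS_{k+l}$, split across the middle $X$-fiber using a dual-basis sum, recovers $\bS_l\circ\bS_k$ up to the Novikov prefactor $Q^{d(k,l)}$ with $d(k,l):=\sigma_{\min}(k)+\sigma_{\min}(l)-\sigma_{\min}(k+l)$, which is manifestly symmetric in $k$ and $l$; the identity $\cS_k\cS_l=Q^{d(k,l)}\cS_{k+l}$ then follows by conjugating through (1).

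The main obstacle will be the localization bookkeeping in part (1): one has to verify that the product of inverse equivariant Euler classes for the normal bundle of $F_i$ in $X$, after incorporating the cocharacter twist of the weights along $\sigma_i$, assembles precisely into the explicit product $\Delta_i(k)$ prescribed in Definition \ref{def:shift_Givental}, and that the parameter shift $\lambda\mapsto\lambda-kz$ emerges from the $\hT$-weight comparison between $X_0$ and $X_\infty$ rather than only up to equivalence. Semi-negativity of the cocharacter $k$ is the key hypothesis throughout: it guarantees that $F_{\min}$ exists and is unique, that section classes below $\sigma_{\min}(k)$ do not contribute, and hence that the generating sums lie in $H^*_\hT(X)[\![Q]\!]$ rather than requiring inversion of $Q$.
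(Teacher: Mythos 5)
The paper does not prove Proposition~\ref{prop:shift}: it is imported from \cite{BMO:Springer,Maulik-Okounkov} and \cite[Theorem 3.14, Corollaries 3.15, 3.16]{Iritani:shift}, so your proposal must be measured against those references rather than against an argument in the text. Your outline does follow their strategy: localization on $E_k$ with respect to the base-rotating $\C^\times\subset\hT$ (whose equivariant parameter is exactly $z$) for part (1), formal differentiation for part (2), and a degeneration of the base for part (3). Part (2) as you state it is complete and correct, with the one caveat that $M(\tau,z)$ is invertible only after inverting equivariant parameters, so one first obtains $[\nabla_\alpha,\bS_k(\tau)]=0$ in $H^*_\hT(X)_\loc[\![Q]\!][\![\tau]\!]$ and then uses injectivity of $H^*_\hT(X)[\![Q]\!][\![\tau]\!]$ into the localization.

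Two steps in your sketch are not yet arguments. First, in part (1) the identity $\Theta(\alpha,\beta)=\bigl(M(\Phi_k\tau,z)\tbS_k(\tau)\alpha,\beta\bigr)$ cannot be obtained from ``the splitting axiom applied at a generic $X$-fiber'': the splitting axiom concerns boundary divisors of the moduli of curves, not decompositions of the target along a fiber, and cutting $E_k$ along a fiber would itself require a degeneration formula together with an analysis of how the descendant $1/(z-\psi)$ passes through the node. In the cited proofs this identity is precisely what the $\C^\times$-localization yields: the node-smoothing factors $1/(\pm z-\psi)$ at $X_0$ and $X_\infty$ (whose normal weights are $+z$ and $-z$) are what assemble the two halves of a fixed stable map into the fundamental solutions of \eqref{eq:fundsol}, while the connecting section contributes $\Delta_i(k)$. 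So your steps (a) and (b) are not two independent computations to be compared; they are the two sides of one localization identity, and phrasing (a) as a splitting axiom hides the entire content of the proof. (Relatedly, the invariant sections for the base rotation pass through $\C^\times_k$-fixed loci, not $T$-fixed components; reducing to the sum over $F_i$ of Definition~\ref{def:shift_Givental} is part of the bookkeeping you defer.) Second, for part (3) the degeneration of $E_{k+l}$ to $E_k\cup_X E_l$ needs the degeneration formula in a non-proper equivariant setting, and the dual-basis splitting of the diagonal exists only after localizing equivariant parameters. This can be repaired, but the route taken in the references --- and visible in the paper's own proof of Lemma~\ref{lem:d(k,l)} --- is the reverse of yours: one verifies $\cS_k\circ\cS_l=Q^{d(k,l)}\cS_{k+l}$ by a direct telescoping computation from the explicit product in Definition~\ref{def:shift_Givental}, reads off $d(k,l)=\sigma_{\min}(k)+\sigma_{\min}(l)-\sigma_{\min}(k+l)$ (manifestly symmetric, as you note), and then transports the relation to $\bS_k(\tau)$ by conjugating with $M(\tau,z)$ via part (1), again concluding by injectivity into the localization. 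That computation is elementary and avoids the degeneration formula altogether.
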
 

We give an explicit description for $d(k,l)$ in the above proposition. 
Let $ET \to BT$ denote a universal $T$-bundle with 
$ET \cong (\C^\infty\setminus \{0\})^{\dim T}$ and 
$BT \cong (\PP^\infty)^{\dim T}$. 
Consider the Borel construction $X_T = X \times_T ET$ of $X$. 
This is an $X$-bundle over $BT$.  
A cocharacter $k$ of $T$ induces a map 
$\varphi_k \colon \PP^1 \subset \PP^\infty \cong B\C^\times \to BT$ and 
the $X$-bundle $E_k$ can be naturally identified 
with the pull-back $\varphi_k^* X_T$ of the Borel construction. 
Therefore we have a natural map $E_k \to X_T$. 
Using this, we can compare section classes in $H_2^\sec(E_k)$ for various $k$ 
in the single space $H_2^T(X) = H_2(X_T)$. Note that   
$H_2^\sec(E_k)$ becomes a subgroup of $H_2^T(X)$ 
by the equivariant formality \cite[Proposition 2.1]{Iritani:shift} 
of $X$. 
We have the following lemma: 
\begin{lemma} 
\label{lem:d(k,l)} 
We have $d(k,l) = \sigma_{\min}(k+l) - \sigma_{\min}(k) - 
\sigma_{\min}(l)$ in $H_2^T(X)$. 
\end{lemma}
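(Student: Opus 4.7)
The plan is to work on the Givental space side, where the shift operators have the closed-form hypergeometric description given in Definition \ref{def:shift_Givental}, and to read off $d(k,l)$ from the composition formula $\cS_k\circ \cS_l = Q^{d(k,l)}\cS_{k+l}$ in Proposition \ref{prop:shift}(3). Restricting to a single fixed component $F_i$ via the localization isomorphism, the operator $\cS_k$ acts on a function $f(\lambda)$ supported on $F_i$ as $(\cS_k f)(\lambda) = \Delta_i(k)(\lambda)\, f(\lambda - kz)$, so the composition becomes
\begin{equation*}
(\cS_k\cS_l f)(\lambda) = \Delta_i(k)(\lambda)\,\Delta_i(l)(\lambda - kz)\, f\bigl(\lambda - (k+l)z\bigr).
\end{equation*}
Matching with $Q^{d(k,l)}\Delta_i(k+l)(\lambda)\, f(\lambda-(k+l)z)$ reduces the lemma to the scalar identity $\Delta_i(k)(\lambda)\,\Delta_i(l)(\lambda-kz) = Q^{d(k,l)}\Delta_i(k+l)(\lambda)$.

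Next I would substitute the explicit formula for $\Delta_i$ and check this identity. Under $\lambda \mapsto \lambda-kz$ the weight $\alpha$ is sent to $\alpha - (\alpha\cdot k)z$, so each factor $(\rho_{i,\alpha,j}+\alpha+cz)$ in $\Delta_i(l)(\lambda-kz)$ becomes $(\rho_{i,\alpha,j}+\alpha+(c-\alpha\cdot k)z)$; i.e.\ the range of the product $c\le 0$ shifts to $c\le -\alpha\cdot k$ and the range $c\le -\alpha\cdot l$ shifts to $c\le -\alpha\cdot(k+l)$. The hypergeometric factors therefore telescope,
\begin{equation*}
\frac{\prod_{c\le 0}}{\prod_{c\le -\alpha\cdot k}}\cdot \frac{\prod_{c\le -\alpha\cdot k}}{\prod_{c\le -\alpha\cdot(k+l)}} = \frac{\prod_{c\le 0}}{\prod_{c\le -\alpha\cdot(k+l)}},
\end{equation*}
matching the hypergeometric part of $\Delta_i(k+l)(\lambda)$ exactly. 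Comparing the prefactors $Q^{\bullet}$ then forces
\begin{equation*}
d(k,l) = \sigma_i(k) + \sigma_i(l) - \sigma_i(k+l) + \sigma_{\min}(k+l) - \sigma_{\min}(k) - \sigma_{\min}(l)
\end{equation*}
in $H_2^T(X)$, for every choice of fixed component $F_i$.

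The last step is to show the additivity $\sigma_i(k)+\sigma_i(l) = \sigma_i(k+l)$ in $H_2^T(X)=H_2(X_T)$, which I expect to be the main obstacle since it requires tracing the section classes through several Borel constructions. The point is that if $x\in F_i$ is $T$-fixed, then the $T$-equivariant inclusion $\iota_x\colon \{x\}\hookrightarrow X$ induces $\iota_x\colon BT \cong \{x\}\times_T ET\hookrightarrow X_T$, and under the natural map $E_k \to X_T$ coming from $\varphi_k\colon \PP^1\to BT$, the section $\sigma_i(k)=\{x\}\times\PP^1$ is sent to $\varphi_k(\PP^1)\subset BT\subset X_T$. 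Under the standard identification $H_2(BT)\cong \Hom(\C^\times,T)$ (cocharacter lattice), the class $[\varphi_k(\PP^1)]$ corresponds to $k$, and since $(\iota_x)_*$ is a homomorphism while cocharacters add in $H_2(BT)$, the desired additivity follows. Plugging $\sigma_i(k)+\sigma_i(l)=\sigma_i(k+l)$ into the formula above yields the lemma.
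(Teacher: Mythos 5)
Your proof is correct and follows essentially the same route as the paper's: the paper likewise derives the relation $(\sigma_i(k)-\sigma_{\min}(k))+(\sigma_i(l)-\sigma_{\min}(l))=(\sigma_i(k+l)-\sigma_{\min}(k+l))+d(k,l)$ from the composition $\cS_k\circ\cS_l$ on the localized side (calling the telescoping of the hypergeometric factors ``a straightforward calculation'') and then concludes via the additivity $\sigma_i(k)+\sigma_i(l)=\sigma_i(k+l)$ in $H_2(X_T)$, justified exactly as you do by the fact that a $T$-fixed point defines a section of the Borel construction. You have merely written out explicitly the two steps the paper leaves implicit.
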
 
\begin{proof} 
A straightforward calculation shows that 
$\cS_k \circ \cS_l = Q^{d(k,l)} \cS_{k+l}$ for some $d(k,l) 
\in H_2(X)$ satisfying 
\[
(\sigma_i(k) - \sigma_{\min}(k)) + (\sigma_i(l) - \sigma_{\min}(l)) 
= (\sigma_i(k+l) - \sigma_{\min}(k+l)) + d(k,l)
\] 
for each fixed component $F_i\subset X^T$, where $\sigma_i(k) 
\in H_2^\sec(E_k)$ denotes the section class associated to 
$F_i$ as in Definition \ref{def:shift_Givental}. 
We may regard this as a relation in $H_2(X_T)$ by pushing it 
forward along the inclusion $X \hookrightarrow X_T$. 
Since a fixed point in $F_i$ defines a section of the Borel construction $X_T$, 
it follows that $\sigma_i(k) + \sigma_i(l) = \sigma_i(k+l)$ in 
$H_2(X_T)$. The conclusion follows immediately. 
\end{proof} 

\begin{definition}[Seidel elements]  
Let $k\colon \C^\times \to T$ be a semi-negative cocharacter. 
The Seidel elements are defined as 
$S_k(\tau) := \lim_{z\to 0} \bS_k(\tau) 1$. 
\end{definition} 
By part (2) of Proposition \ref{prop:shift},  
$\lim_{z\to 0} \bS_k(\tau)$ commutes with 
the quantum multiplication and therefore we have 
\[
\lim_{z\to 0} \bS_k(\tau) =  (S_k(\tau) \star_\tau).  
\] 
By part (3) of Proposition \ref{prop:shift}, we have 
$S_k(\tau) \star_\tau S_l(\tau) = Q^{d(k,l)} S_{k+l}(\tau)$. 
This gives the \emph{Seidel representation} 
\cite{Seidel:pi1,McDuff-Tolman,Iritani:shift} 
of the monoid of semi-negative cocharacters on 
equivariant quantum cohomology. 

\begin{definition}[commuting vector fields {\cite[\S 4.3]{Iritani:shift}}] 
\label{def:commuting} 
For a semi-negative cocharacter $k$ of $T$, we define 
a vector field $\bV_k$ on 
$H^*_T(X) \times H^*_\hT(X) = H^*_T(X) \times 
H^*_T(X)[z]$ by 
\[
(\bV_k)_{\tau,\Upsilon}  = (S_k(\tau), [z^{-1} \bS_k(\tau)]_+ \Upsilon) 
\]
where $(\tau,\Upsilon) \in H_T^*(X) \times H^*_\hT(X)$ 
and $[z^{-1} \bS_k(\tau)]_+\Upsilon := z^{-1} \bS_k(\tau) \Upsilon- 
z^{-1} S_k(\tau) \star_\tau \Upsilon$. The vector fields $\bV_k$ 
commute each other: $[\bV_k,\bV_l]= 0$. 
\end{definition} 
\begin{remark} 
\label{rem:Givental_cone} 
The fundamental solution $M(\tau,z)$ in \eqref{eq:fundsol} defines a map 
\[
H_T^*(X) \times H^*_\hT(X) \longrightarrow H^*_\hT(X)_\loc, 
\quad (\tau,\Upsilon) 
\longmapsto z M(\tau,z) \Upsilon. 
\]
The image of this map is known as the \emph{Givental cone}  
\cite{Givental:symplectic}. Under this map, 
the vector field $\bV_k$ corresponds to the linear vector field 
on $H_\hT(X)_\loc$ given by $\bbf \mapsto z^{-1}\cS_k \bbf$ 
(see \cite[\S 4.3]{Iritani:shift}). 
The commutativity of the vector fields $\bV_k$ follows 
by Proposition \ref{prop:shift}, (3). 
\end{remark} 

\section{Equivariant mirrors of toric manifolds} 
\label{sec:mirrors} 
\subsection{Toric manifolds} 
We collect basic definitions and facts about 
toric manifolds, for which we refer 
the reader to \cite{Oda:toric,CLS}. 
Let $\bN$ be a free abelian group of finite rank. 
Consider a rational simplicial fan $\Sigma$ 
in $\bN_\R = \bN\otimes\R$ such that 
\begin{itemize} 
\item each cone of $\Sigma$ is generated by part of 
a $\Z$-basis of $\bN$; 
\item the support $|\Sigma|=\bigcup_{\sigma \in \Sigma} \sigma$ 
of the fan $\Sigma$ is full-dimensional and convex; 
\item there exists a strictly convex piecewise linear function 
$f \colon |\Sigma| \to \R$ which is linear on each cone 
of $\Sigma$. 
\end{itemize} 
These conditions ensure that the corresponding toric 
variety $X_\Sigma$ is smooth and satisfies the conditions 
in \S \ref{sec:generalities}. 
Let $b_1,\dots,b_m\in \bN$ denote primitive generators of 
the one-dimensional cones of $\Sigma$. 
These define the fan sequence 
\begin{equation} 
\label{eq:fanseq}
\begin{CD} 
0@>>> \LL @>>> \Z^m @>{(b_1,\dots,b_m)}>> \bN @>>> 0
\end{CD} 
\end{equation} 
where the third arrow sends the standard basis $e_i\in \Z^m$ 
to $b_i \in \bN$ and $\LL$ is the kernel of $\Z^m \to \bN$. 
For a subset $I\subset \{1,\dots,m\}$, we write $\sigma_I$ 
for the cone generated by $\{b_i : i \in I\}$. 
Define $K:= \LL \otimes \C^\times$. Since $\LL$ is a subgroup 
of $\Z^m$, $K$ is a subgroup of $(\C^\times)^m$ and thus 
$K$ acts on $\C^m$. 
The toric variety $X_\Sigma$ 
is defined as the quotient 
\[
X_\Sigma = (\C^m \setminus Z)/K 
\]
where $Z\subset \C^m$ is defined as the zero set 
of the ideal 
generated by monomials $\prod_{1\le i\le m, i\notin I} z_i$ 
with $I\subset \{1,\dots,m\}$ such that the cone 
$\sigma_{I}$ belongs to $\Sigma$. 
Here $z_1,\dots,z_m$ are the standard co-ordinates on $\C^m$. 
The torus $T := (\C^\times)^m/K$ naturally acts on $X_\Sigma$.  
By tensoring the exact sequence \eqref{eq:fanseq} with $\C^\times$,  
we find $T \cong \bN\otimes \C^\times$. 
In particular the lattice $\Hom(\C^\times, T)$ 
of cocharacters is identified with $\bN$. 
The toric variety $X_\Sigma$ contains the torus 
$T =(\C^\times)^m/K$, and a character $\chi \in 
\Hom(T,\C^\times) = \bN^*$ of $T$ extends to 
a regular function on $X_\Sigma$ if and only if 
$\chi \cdot v \ge 0$ for all $v\in |\Sigma| \subset \bN\otimes \R$. 
The space $H^0(X_\Sigma, \cO)$ of regular functions 
is generated by such characters, and therefore we find 
that a cocharacter $k\in \bN$ of $T$ 
is semi-negative\footnote
{Recall the convention on the $T$-action on $H^0(X_\Sigma,\cO)$ at 
the beginning of \S \ref{subsec:shift}.} if and only if $k\in |\Sigma|$. 

\begin{notation} 
\label{nota:Psi}
For $k\in \bN \cap |\Sigma|$, take a cone $\sigma_I \in \Sigma$ 
containing $k$ and write $k = \sum_{i\in I} n_i b_i$. Define 
a vector $\Psi(k) = (\Psi_1(k),\dots,\Psi_m(k)) \in (\Z_{\ge 0})^m$ as 
\[
\Psi_i(k) = \begin{cases} n_i & \text{if $i\in I$;} \\ 
0 & \text{otherwise.} 
\end{cases} 
\]
and set $|k| := \sum_{i\in I} n_i$. 
\end{notation} 

For $1\le i\le m$, let $u_i\in H^2_T(X_\Sigma)$ denote the 
Poincar\'{e} dual of the $T$-invariant divisor $\{z_i = 0\}/K \subset X_\Sigma$. 
The $T$-equivariant cohomology ring of $X_\Sigma$ is generated by 
$u_1,\dots,u_m$ over $\C$ and has the following presentation: 
\[
H^*_T(X_\Sigma)= \C[u_1,\dots,u_m]/\frI_{\rm SR}
\]
where $\frI_{\rm SR}$ is the ideal generated by $\prod_{i\in I} u_i$ such that 
the cone $\sigma_I$ does not belong to $\Sigma$. 
An element $\chi \in H^2_T(\pt,\Z) \cong \bN^*$ can be expressed 
as a linear combination of $u_i$'s: 
\begin{equation}
\label{eq:equiv_parameter}
\chi = \sum_{i=1}^m (\chi \cdot b_i) u_i.
\end{equation}
For each $k\in \bN \cap |\Sigma|$, 
define $\phi_k:= \prod_{i=1}^m u_i^{\Psi_i(k)} \in H^*_T(X_\Sigma)$. 
The following lemma is obvious from the above
presentation of $H^*_T(X_\Sigma)$. 
\begin{lemma}
\label{lem:basis} 
The set $\{\phi_k : k \in \bN \cap |\Sigma|\}$ is a basis of 
$H_T^*(X_\Sigma)$ over $\C$.  
\end{lemma}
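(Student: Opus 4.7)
The plan is to reduce the statement to the standard Stanley--Reisner monomial basis of $\C[u_1,\dots,u_m]/\frI_{\rm SR}$, then exhibit a bijection between $\bN \cap |\Sigma|$ and the exponent vectors of the basis monomials. Since $\frI_{\rm SR}$ is a monomial ideal, a monomial $\prod_{i=1}^m u_i^{a_i}$ lies in $\frI_{\rm SR}$ iff its support $J := \{i : a_i > 0\}$ contains some $I$ with $\sigma_I \notin \Sigma$. Because $\Sigma$ is closed under taking faces, this happens iff $\sigma_J \notin \Sigma$. Hence the monomials $\prod u_i^{a_i}$ such that $\sigma_{\mathrm{supp}(a)} \in \Sigma$ descend to a $\C$-basis of $H^*_T(X_\Sigma)$.

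Next, I would verify that $\Psi$ is well-defined and that it gives a bijection
\[
\Psi \colon \bN \cap |\Sigma| \;\longrightarrow\;
\left\{ (a_1,\dots,a_m) \in (\Z_{\ge 0})^m : \sigma_{\mathrm{supp}(a)} \in \Sigma \right\}.
\]
For well-definedness: any $k \in \bN \cap |\Sigma|$ lies in a unique minimal cone $\sigma_{I_0}$ of $\Sigma$ (the cone whose relative interior contains $k$); if $\sigma_I$ is any cone containing $k$, then $\sigma_{I_0}$ is a face of $\sigma_I$, so writing $k = \sum_{i\in I} n_i b_i$ forces $n_i = 0$ for $i \in I\setminus I_0$. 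The smoothness hypothesis (each cone is generated by part of a $\Z$-basis of $\bN$) then makes the coefficients $n_i$ unique, so $\Psi(k)$ is independent of the choice of $\sigma_I$. Injectivity is immediate since $k = \sum_i \Psi_i(k) b_i$. For surjectivity, given $(a_1,\dots,a_m)$ with $J := \mathrm{supp}(a)$ and $\sigma_J \in \Sigma$, the point $k := \sum_{i \in J} a_i b_i$ lies in $\sigma_J \subset |\Sigma|$ and in $\bN$ (again by smoothness of $\sigma_J$), and unfolding the definition gives $\Psi(k) = (a_1,\dots,a_m)$.

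Combining these two steps, $\{\phi_k = \prod_i u_i^{\Psi_i(k)} : k \in \bN \cap |\Sigma|\}$ coincides with the set of Stanley--Reisner basis monomials, proving the lemma. There is no serious obstacle here; the only non-formal point is the well-definedness of $\Psi(k)$ for boundary lattice points, which is handled by the simpliciality and smoothness of $\Sigma$ as above. The rest is bookkeeping.
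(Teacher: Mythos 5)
Your argument is correct and is precisely the expansion of what the paper leaves implicit: the paper simply declares the lemma ``obvious from the above presentation'' of $H^*_T(X_\Sigma)$ as the Stanley--Reisner ring, and your proof supplies the two steps that make this obvious (the standard monomial basis of a quotient by a monomial ideal, and the bijection $k \mapsto \Psi(k)$ between $\bN\cap|\Sigma|$ and the admissible exponent vectors, with well-definedness of $\Psi$ secured by simpliciality and smoothness of the cones). Nothing further is needed.
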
 

We have that 
$H_T^2(X_\Sigma,\Z)$ is a free $\Z$-module with 
basis $u_1,\dots,u_m$. In particular the equivariant homology 
$H^T_2(X_\Sigma,\Z)$ is identified with $\Z^m$ via the dual
basis of $u_1,\dots,u_m$. Moreover the fan sequence 
\eqref{eq:fanseq} is identified with: 
\[
\begin{CD} 
0@>>> H_2(X_\Sigma,\Z) \cong\LL @>>> H_2^T(X_\Sigma,\Z) 
\cong\Z^m @>>> H_2^T(\pt,\Z) @>>> 0.  
\end{CD} 
\]
For a cone $\sigma_I \in \Sigma$, we set 
$C_I = \{d \in H_2(X_\Sigma,\R): d \cdot u_i \ge 0 \text{ for}\ i\notin I\}$.
Then the cone of effective curves is generated by $C_I$ with $\sigma_I \in \Sigma$ 
and we have: 
\begin{equation} 
\label{eq:eff_cone} 
\Eff(X_\Sigma) = \LL \cap \sum_{\sigma_I \in \Sigma} C_I. 
\end{equation} 
Let $k\in \bN \cap |\Sigma|$ be a semi-negative cocharacter 
of $T$ and let $E_k$ be the associated $X_\Sigma$-bundle 
as in \eqref{eq:associated_bundle}.

\begin{lemma} 
\label{lem:minimal_section} 
The minimal section class $\sigma_{\min}(k)$ of 
$E_k$ is identified with the element $-\Psi(k)\in 
\Z^m \cong H_2^T(X_\Sigma,\Z)$ under the natural inclusion 
$H_2^\sec(E_k) \hookrightarrow H_2^T(X_\Sigma)$ described in the 
paragraph after Proposition \ref{prop:shift}. 
\end{lemma}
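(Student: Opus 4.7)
The plan is to compute $\sigma_{\min}(k)\cdot u_i$ for each $i$ by $\hT$-equivariant localization on a well-chosen representative of $\sigma_{\min}(k)$, then compare with $-\Psi(k) \in \Z^m$ under the identification $d \mapsto (d\cdot u_1,\dots,d\cdot u_m)$.

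First I locate a convenient $T$-fixed point in $F_{\min}$. Let $\sigma_I \in \Sigma$ be the minimal cone containing $k$, so that $k = \sum_{i \in I} n_i b_i$ with $n_i > 0$. Pick any maximal cone $\sigma_J \supset \sigma_I$; then the $T$-fixed point $x_J = V(\sigma_J)$ lies in $V(\sigma_I) = F_{\min}$. Writing $k = \sum_{j \in J} m_j b_j$ in the basis $\{b_j\}_{j \in J}$ of $\bN_\R$, one gets $m_j = n_j$ for $j \in I$ and $m_j = 0 = n_j$ for $j \in J \setminus I$, so $m_i = \Psi_i(k)$ for every $i\in \{1,\dots,m\}$ (both sides vanish for $i \notin J$). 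Because $F_{\min}$ is connected and $\sigma_x = \{x\} \times \PP^1 \subset E_k$ depends continuously on $x \in F_{\min}$, the section class $\sigma_{\min}(k)$ equals $\sigma_{x_J}$.

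Next I localize on $\sigma_{x_J} \cong \PP^1$. The $\hT$-action $(t,u)\cdot[x,(v_1,v_2)] = [tx,(v_1,uv_2)]$ fixes two points $x_J^0 \in X_0$ and $x_J^\infty \in X_\infty$ on $\sigma_{x_J}$, with tangent weights $+z$ and $-z$ respectively. I pair $\sigma_{x_J}$ with $u_i \in H^2_T(X_\Sigma) = H^2(X_T)$ by using the natural $\hT$-equivariant lift $\hat{u}_i \in H^*_\hT(E_k)$ characterized by $\hat{u}_i|_{X_0} = u_i$ and $\hat{u}_i|_{X_\infty} = \Phi_k(u_i)$, which coincides with the pullback of $u_i$ via the natural map $E_k \to X_T$. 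The standard restriction formula for toric varieties gives $u_i|_{x_J} = b_i^{*,J}$ if $i \in J$ and $0$ otherwise, where $b_i^{*,J} \in \bN^*$ denotes the dual basis element to $b_i$ with respect to $\{b_j\}_{j \in J}$. Since $\Phi_k$ shifts $\lambda \mapsto \lambda + kz$, the restriction at $x_J^\infty$ becomes $b_i^{*,J} + b_i^{*,J}(k) z = b_i^{*,J} + m_i z$ for $i \in J$.

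The Atiyah--Bott localization formula then gives, for $i \in J$,
\[
\sigma_{x_J} \cdot u_i \;=\; \frac{b_i^{*,J}}{+z} + \frac{b_i^{*,J} + m_i z}{-z} \;=\; -m_i,
\]
and $\sigma_{x_J} \cdot u_i = 0$ for $i \notin J$. Since $m_i = \Psi_i(k)$ for every $i$, this yields $\sigma_{\min}(k) = -(m_1,\dots,m_m) = -\Psi(k)$ in $\Z^m \cong H_2^T(X_\Sigma,\Z)$. The main subtlety is bookkeeping of signs: the combination of the shift $\lambda \mapsto \lambda + kz$ in $\Phi_k$ with the tangent weight $-z$ at $x_J^\infty$ is precisely what produces the minus sign in $-\Psi(k)$, so verifying that $\hat u_i$ really is the right equivariant lift of $u_i$ (i.e.\ that it matches the natural pullback from $X_T$) is the only step requiring genuine care.
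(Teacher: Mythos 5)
Your proof is correct, and its skeleton coincides with the paper's: identify $F_{\min}$ with the orbit closure $V(\sigma_I)=\bigcap_{i\in I}\{z_i=0\}$, pick a representative section $\sigma_x=\{x\}\times\PP^1$, and compute its pairing with each $u_i$. Where you diverge is in how that pairing is evaluated. The paper pulls $u_i$ back to the toric divisor $\cD_i=\{z_i=0\}\times_{\C^\times}(\C^2\setminus\{0\})$ in $E_k$ and asserts the intersection number $\cD_i\cdot\sigma_{\min}(k)=-n_i$ (for $i\in I$) by a direct geometric count, which it leaves as ``easy to see.'' You instead choose a $T$-fixed point $x_J$ in $F_{\min}$ and run Atiyah--Bott localization on the section $\PP^1$ against the $\hT$-equivariant lift $\hat u_i$ with $\hat u_i|_{X_0}=u_i$, $\hat u_i|_{X_\infty}=\Phi_k(u_i)$; this lift is exactly the equivariant class of $\cD_i$, so the two computations agree. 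The localization route costs you the verification that $\hat u_i$ is the correct lift (which you rightly flag as the delicate point, and which follows from the definition of $\Phi_k$ via the automorphism $(t,u)\mapsto(tu^k,u)$), but in exchange it makes the provenance of the minus sign completely explicit — it arises from the shift $\lambda\mapsto\lambda+kz$ at $X_\infty$ combined with the tangent weight $-z$ there — rather than being absorbed into an unstated intersection computation.
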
 
\begin{proof} 
Let $\sigma_I\in \Sigma$ be a cone containing $k$ 
and write $k = \sum_{i\in I} n_i b_i$. 
The minimal section $\sigma_{\min}(k)$ of $E_k$ 
is associated to a point in the toric subvariety 
$\bigcap_{i\in I}\{z_i = 0\}$ whose normal bundle 
has $\C^\times$ weights $\{n_i\}_{i\in I}$. 
The class $u_i\in H^2_T(X_\Sigma)$ 
corresponds, via the natural map 
$E_k \to (X_\Sigma)_T$, to the toric divisor $\cD_i = 
\{z_i = 0\} \times_{\C^\times} 
(\C^2 \setminus \{0\})$ in $E_k$. 
It suffices to compute the intersection number of $\sigma_{\min}(k)$ 
and $\cD_i$. It is easy to see that $\cD_i \cdot \sigma_{\min}(k)$ 
equals $-n_i$ if $i\in I$ and zero otherwise. The conclusion follows. 
\end{proof} 

The above lemma and Lemma \ref{lem:d(k,l)} imply:  

\begin{corollary} 
\label{cor:d(k,l)_toric}
The class $d(k,l)\in H_2(X_\Sigma,\Z)$ in Proposition \ref{prop:shift} 
is given by $d(k,l) = \Psi(k) + \Psi(l) - \Psi(k+l)$ under 
the inclusion $H_2(X_\Sigma,\Z) \hookrightarrow H_2^T(X_\Sigma,\Z) 
\cong \Z^m$. 
In particular $d(k,l) \in \Eff(X_\Sigma)$ by \eqref{eq:eff_cone}. 
\end{corollary}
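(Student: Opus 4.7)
The plan is to combine the two immediately preceding lemmas and then add a short convexity argument for the effectivity assertion.

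First I would invoke Lemma \ref{lem:d(k,l)}, which already identifies $d(k,l)$ with $\sigma_{\min}(k+l) - \sigma_{\min}(k) - \sigma_{\min}(l)$ inside $H_2^T(X_\Sigma,\Z)$. Since the support $|\Sigma|$ is assumed convex, semi-negativity of $k$ and $l$ implies that $k+l \in |\Sigma|$ as well, so all three minimal section classes are defined. Applying Lemma \ref{lem:minimal_section} to each summand gives $\sigma_{\min}(k) = -\Psi(k)$, $\sigma_{\min}(l) = -\Psi(l)$, $\sigma_{\min}(k+l) = -\Psi(k+l)$ in $\Z^m \cong H_2^T(X_\Sigma,\Z)$, and a one-line subtraction yields $d(k,l) = \Psi(k) + \Psi(l) - \Psi(k+l)$. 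The fact that this element actually lies in the sublattice $\LL = H_2(X_\Sigma,\Z)$ is not new information: it is already contained in Proposition \ref{prop:shift}(3).

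For the effectivity statement I would use the presentation \eqref{eq:eff_cone}. Pick any cone $\sigma_J \in \Sigma$ containing $k+l$; such a cone exists because $k+l \in |\Sigma|$. For every $i \notin J$ the expansion of $k+l$ in the basis $\{b_j : j \in J\}$ has no $b_i$-contribution, so $\Psi_i(k+l) = 0$, while $\Psi_i(k), \Psi_i(l) \ge 0$ by definition. Under the identification $H_2^T(X_\Sigma,\Z) \cong \Z^m$ via the basis dual to $u_1,\dots,u_m$, the $i$-th component of $d(k,l)$ is exactly $d(k,l)\cdot u_i = \Psi_i(k) + \Psi_i(l) - \Psi_i(k+l)$, which is therefore non-negative for $i \notin J$. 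This shows $d(k,l) \in C_J$, and combined with $d(k,l) \in \LL$ we obtain $d(k,l) \in \LL \cap C_J \subset \Eff(X_\Sigma)$.

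There is no real obstacle: everything of substance has been done in Lemmas \ref{lem:d(k,l)} and \ref{lem:minimal_section}. The only thing to watch carefully is the sign convention in Lemma \ref{lem:minimal_section} (the minimal section corresponds to $-\Psi(k)$, not $+\Psi(k)$), which is what produces the correct expression $\Psi(k)+\Psi(l)-\Psi(k+l)$ rather than its negative, and the observation that convexity of $|\Sigma|$ together with $k,l \in |\Sigma|$ automatically puts $k+l$ in $|\Sigma|$ so that $\Psi(k+l)$ is well defined.
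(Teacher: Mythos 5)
Your proof is correct and follows exactly the route the paper intends: the paper derives this corollary by simply combining Lemma \ref{lem:minimal_section} with Lemma \ref{lem:d(k,l)}, which is your first paragraph. Your second paragraph merely fills in the effectivity detail (choosing a cone $\sigma_J\ni k+l$ and checking $d(k,l)\in C_J\cap\LL$) that the paper leaves implicit in its citation of \eqref{eq:eff_cone}, and that verification is also correct.
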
 

\begin{corollary} 
\label{cor:Sk} 
We have $\bS_k(\tau) = \prod_{i=1}^m \bS_{b_i}(\tau)^{\Psi_i(k)}$. 
\end{corollary}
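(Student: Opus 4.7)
The plan is to deduce the statement from the composition rule of shift operators in Proposition \ref{prop:shift}(3), combined with the explicit formula for the class $d(k,l)$ in Corollary \ref{cor:d(k,l)_toric}. Since $b_1,\dots,b_m$ all lie in $|\Sigma|$, each $b_i$ is a semi-negative cocharacter, so $\bS_{b_i}(\tau)$ is defined; moreover, Proposition \ref{prop:shift}(3) guarantees that these operators commute with each other, so the product $\prod_{i=1}^m \bS_{b_i}(\tau)^{\Psi_i(k)}$ makes unambiguous sense.

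First I would establish by induction on $r$ the iterated composition formula
\[
\bS_{k_1}(\tau) \circ \cdots \circ \bS_{k_r}(\tau) =
Q^{\sum_{j=1}^r \Psi(k_j) - \Psi(k_1 + \cdots + k_r)}
\, \bS_{k_1+\cdots+k_r}(\tau),
\]
for semi-negative cocharacters $k_1,\dots,k_r$ with $k_1+\cdots+k_r \in |\Sigma|$. The base case $r=2$ is precisely the content of Corollary \ref{cor:d(k,l)_toric}. The inductive step is a telescoping computation: the formula $d(k,l) = \Psi(k)+\Psi(l)-\Psi(k+l)$ ensures that the exponents of $Q$ collapse, namely $d(k_1+\cdots+k_{r-1},k_r)$ combined with the previously accumulated exponent $\sum_{j=1}^{r-1}\Psi(k_j) - \Psi(k_1+\cdots+k_{r-1})$ gives $\sum_{j=1}^r \Psi(k_j) - \Psi(k_1+\cdots+k_r)$. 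Along the way I need to check that at each stage the partial sum $k_1+\cdots+k_j$ lies in $|\Sigma|$ so that the operators are defined, which holds because when one takes the $k_j$'s to be the $b_i$'s occurring in a fixed cone $\sigma_I$ containing $k$, all partial sums stay in that cone.

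Next I apply this formula to the sequence in which each $b_i$ appears $\Psi_i(k)$ times. Fix a cone $\sigma_I \in \Sigma$ containing $k$ with $k=\sum_{i\in I} n_i b_i$, so that $\Psi_i(k)=n_i$ for $i\in I$ and $\Psi_i(k)=0$ otherwise. The sum of the chosen cocharacters is $\sum_i \Psi_i(k)\, b_i = k$. Moreover, $\Psi(b_i) = e_i$ (the $i$th standard basis vector in $\Z^m$), and therefore
\[
\sum_{i=1}^m \Psi_i(k)\, \Psi(b_i) = \sum_{i=1}^m \Psi_i(k)\, e_i = \Psi(k).
\]
Substituting into the iterated formula, the exponent of $Q$ becomes $\Psi(k)-\Psi(k) = 0$, yielding $\prod_{i=1}^m \bS_{b_i}(\tau)^{\Psi_i(k)} = \bS_k(\tau)$.

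There is no real obstacle; the argument is purely formal, and the only thing to verify carefully is that the telescoping of $d$ via the identity in Corollary \ref{cor:d(k,l)_toric} cancels exactly against the contribution from $\Psi(b_i)=e_i$. The independence of the output from the choice of cone $\sigma_I$ and from the ordering of the factors is automatic from the commutativity asserted in Proposition \ref{prop:shift}(3).
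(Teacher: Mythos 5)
Your proof is correct and follows essentially the same route as the paper: both rest on the composition rule $\bS_k(\tau)\circ\bS_l(\tau)=Q^{d(k,l)}\bS_{k+l}(\tau)$ from Proposition \ref{prop:shift}(3) together with the formula $d(k,l)=\Psi(k)+\Psi(l)-\Psi(k+l)$ of Corollary \ref{cor:d(k,l)_toric}. The paper simply shortcuts your explicit telescoping by observing that $d(k,l)=0$ whenever $k$ and $l$ lie in a common cone (since $\Psi$ is additive there), which makes every factor of $Q$ vanish at once; your more detailed bookkeeping arrives at the same cancellation.
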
 
\begin{proof} 
By Corollary \ref{cor:d(k,l)_toric}, $d(k,l) =0$ whenever $k$ and $l$ 
belong to the same cone. 
The conclusion follows by 
the property $\bS_k(\tau) \circ \bS_l(\tau) = Q^{d(k,l)}\bS_{k+l}(\tau)$. 
\end{proof}

\subsection{Mirror map} 
\label{subsec:mirrormap}
We introduce an infinite set 
$\by = \{y_k : k \in \bN\cap |\Sigma|\}$ of variables 
which forms a natural co-ordinate system of the B-model. 
We set $y_i := y_{b_i}$ for $1\le i\le m$ 
and $G : = (\bN \cap |\Sigma|) \setminus \{b_1,\dots,b_m\}$. 
By abuse of notation, 
we write $M[\![\by]\!]$ for the space of formal power series in the variables 
$\{ \log y_{1},\dots, \log y_{m}\} \cup \{ y_k : k\in G\}$ with coefficients 
in $M$ (see \S\ref{subsec:power_series}). 
We consider the lattice of infinite rank:
\[
\hLL = \left\{ 
\ell  = (\ell_k)_{k \in \bN\cap |\Sigma|} 
\in \Z^{\oplus \bN\cap|\Sigma|} : \sum_{k \in \bN \cap |\Sigma|} \ell_k k =0
\right\}.  
\]
By the inclusion $\{b_1,\dots,b_m\} \subset \bN\cap |\Sigma|$, 
we can regard $\LL$ as a sublattice of $\hLL$ and
we have $\hLL/\LL \cong \Z^{\oplus G}$. 
We define a splitting $\hLL \to \LL\cong H_2(X_\Sigma,\Z)$, 
$\ell \mapsto d(\ell)$ by 
\[
u_i \cdot d(\ell) = \sum_{k \in \bN\cap |\Sigma|} \ell_k \Psi_i(k).
\]
Note that we have the linear relation
$\sum_{i=1}^m (u_i \cdot d(\ell)) b_i =0$. 
We set 
\begin{align*} 
\hLL_\eff & := \{\ell \in \hLL : d(\ell) \in \Eff(X_\Sigma), \ 
\ell_k \ge 0 \ (\forall k \in G)\} \\
& \cong \Eff(X_\Sigma) \times (\Z_{\ge 0})^{\oplus G}
\end{align*} 
where in the second line we used the splitting 
$\hLL \cong \LL \oplus \Z^{\oplus G}$. 
We set, for $\ell \in \Z^{\oplus(\bN \cap |\Sigma|)}$, 
\[
\by^\ell := \prod_{\ell\in \bN\cap|\Sigma|} y_k^{\ell_k} = 
\exp\left(\sum_{i=1}^m \ell_{b_i} \log y_i\right) \prod_{k\in G} y_k^{\ell_k}.  
\] 
Note that every neighbourhood of $0$ in $\C[\![Q]\!][\![\by]\!]$ contains 
all but finite elements of $\{\by^{\ell} Q^{d(\ell)} : \ell \in \hLL_\eff\}$, 
thus a power series of the form $\sum_{\ell \in \hLL_\eff} a_\ell \by^{\ell} 
Q^{d(\ell)}$ is well-defined. 
We introduce mirror maps as an integral submanifold of 
the commuting vector fields $\{\bV_k: k\in \bN\cap |\Sigma|\}$ 
from Definition \ref{def:commuting}. 
\begin{proposition} 
\label{prop:mirror_map}
There exist unique functions 
\[
\tau(\by) \in H^*_T(X_\Sigma)[\![Q]\!] [\![\by]\!], 
\quad 
\Upsilon(\by,z) \in H^*_\hT(X_\Sigma)[\![Q]\!][\![\by]\!] 
\]
of the form 
\begin{align}
\label{eq:tau_Upsilon_expansion}  
\begin{split} 
\tau(\by) & = \sum_{i=1}^m u_i \log y_i+ 
\sum_{\ell\in \hLL_\eff \setminus \{0\}} 
\tau_\ell \by^{\ell} Q^{d(\ell)}  \\
\Upsilon(\by,z) & = 1 + 
\sum_{\ell \in \hLL_\eff \setminus\{0\}} \Upsilon_\ell(z) \by^{\ell} Q^{d(\ell)} 
\end{split} 
\end{align} 
with $\tau_\ell\in H^*_T(X_\Sigma)$, $\Upsilon_\ell(z) 
\in H^*_\hT(X_\Sigma)$ such that we have 
\begin{equation} 
\label{eq:flow} 
\parfrac{\tau(\by)}{y_k} = S_k(\tau(\by)), \qquad
\parfrac{\Upsilon(\by,z)}{y_k}= [z^{-1}\bS_k(\tau(\by))]_+ \Upsilon(\by,z)
\end{equation} 
for all $k\in \bN \cap |\Sigma|$. 
We call the function $\by \mapsto \tau(\by)$ the \emph{mirror map}. 
\end{proposition}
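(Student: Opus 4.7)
The plan is to interpret \eqref{eq:flow} as the equations characterizing the formal integral submanifold of the family of commuting vector fields $\{\bV_k\}_{k\in\bN\cap|\Sigma|}$ of Definition \ref{def:commuting}, parametrized by the coordinates $\by$. Since $[\bV_k,\bV_l]=0$, the existence and uniqueness of such an integral submanifold with the prescribed asymptotic expansion reduces to a formal Frobenius argument.

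Concretely, I would substitute the ansatz \eqref{eq:tau_Upsilon_expansion} into \eqref{eq:flow} and compare coefficients of $\by^\ell Q^{d(\ell)}$. Setting $t_i:=\log y_i$, the flow equation for $k=b_i$ becomes $\partial_{t_i}\tau=y_i S_{b_i}(\tau)$ and $\partial_{t_i}\Upsilon=y_i[z^{-1}\bS_{b_i}(\tau)]_+\Upsilon$, while for $k\in G$ it remains $\partial_{y_k}\tau=S_k(\tau)$ etc. At the lowest order ($\ell=0$), the equation reads $\partial_{t_i}\tau|_{\text{base}}=u_i$, which is consistent with the logarithmic ansatz $\sum_i u_i\log y_i$ provided that $S_{b_i}(0)|_{Q=0}=u_i$ in $H^*_T(X_\Sigma)$; this latter identity is a direct computation of the classical part of the toric Seidel element. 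Similarly, $\Upsilon_0=1$ is consistent with the vanishing at $\tau=0,Q=0$ of $[z^{-1}\bS_{b_i}]_+\cdot 1$.

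For $\ell\ne 0$, I would order $\hLL_\eff$ by a partial order refining componentwise comparison on $\Eff(X_\Sigma)\times(\Z_{\ge 0})^{\oplus G}$ and proceed by induction. Equating coefficients of $\by^\ell Q^{d(\ell)}$ in \eqref{eq:flow} yields, for each $k$, a linear equation determining $\tau_\ell$ (resp.\ $\Upsilon_\ell(z)$) in terms of $\{\tau_{\ell'},\Upsilon_{\ell'}:\ell'\prec\ell\}$ and the Taylor coefficients of $S_k,\bS_k$ at the base point. For the recursion to be consistent, the different choices of $k$ must yield the same value of $\tau_\ell,\Upsilon_\ell$; this is precisely the Frobenius integrability $[\bV_k,\bV_l]=0$, recorded in Definition \ref{def:commuting} as a formal consequence of Proposition \ref{prop:shift}(2),(3).

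The main obstacle is the bookkeeping: $\hLL_\eff$ has infinite rank, so one must verify that at each step of the induction only finitely many lower-order coefficients contribute to the right-hand side, and that the resulting formal sums converge in the topology of $\C[\![Q]\!][\![\by]\!]$ described in \S\ref{subsec:power_series}. Once the filtration is chosen appropriately, the remainder of the proof is a routine Frobenius/Picard argument in the formal power series category, driven entirely by the commutativity of the vector fields $\bV_k$.
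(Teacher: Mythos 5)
Your overall strategy---realizing $(\tau,\Upsilon)$ as the integral submanifold of the commuting vector fields $\bV_k$ and running a formal induction---is the same as the paper's, which constructs the solution by flowing out from the locus $\{y_k=0,\ k\in G\}$ (where existence and uniqueness are imported from \cite[Proposition 4.7]{Iritani:shift}) using the formal flow theorem for commuting vector fields. However, there is a genuine gap in your argument: you treat the monomials $\by^\ell Q^{d(\ell)}$, $\ell\in\hLL_\eff$, as if they were a free basis in which one may simply ``compare coefficients,'' but the expansion \eqref{eq:tau_Upsilon_expansion} is a \emph{constrained} form --- the $Q$-exponent is forced to equal $d(\ell)$, and $\ell_{b_i}$ may be negative. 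For your coefficient comparison to make sense, you must first show that when $\tau(\by)$ has the ansatz form, the right-hand sides $S_k(\tau(\by))$ and $[z^{-1}\bS_k(\tau(\by))]_+\Upsilon(\by,z)$ again expand only in monomials of the allowed type $\by^{\ell}Q^{d(\ell)}$ (shifted by $e_k$). A priori they could contain terms $\by^{\ell}Q^{d}$ with $d\neq d(\ell)$, in which case the equation \eqref{eq:flow} would have no solution of the claimed form, and your induction would break down (or would only produce a solution in the larger space of unconstrained power series in $\log y_i$, $y_k$, $Q$, leaving the specific shape \eqref{eq:tau_Upsilon_expansion} --- which is part of the statement --- unproved).

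The missing ingredient is exactly the divisor-equation computation in the paper's proof: writing $\tau(\by)=\sum_i u_i\log y_i+\tau'(\by)$, one has $\bS_k(\tau(\by);Q)=y^{-\Psi(k)}\bS_k(\tau'(\by);Qy)$, so the flow equation becomes $y^{\Psi(k)}\parfrac{\tau'}{y_k}=S_k(\tau';Qy)$; combined with the observation that $e_k-\Psi(k)\in\hLL_\eff$ and $d(e_k-\Psi(k))=0$, this shows that each step of the recursion stays inside the $\hLL_\eff$-indexed family of monomials. You correctly identify the role of $[\bV_k,\bV_l]=0$ in reconciling the equations for different $k$, and your lowest-order check $S_{b_i}(0)|_{Q=0}=u_i$ is right (it follows from Lemma \ref{lem:classical_shift}), but consistency of the recursion with the constrained expansion is a second, independent requirement that your proposal does not address.
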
 
\begin{proof} 
The existence and uniqueness of $\tau(\by)$, $\Upsilon(\by,z)$ along 
the locus $\{y_k = 0, \ \forall k\in G\}$ is proved in 
\cite[Proposition 4.7]{Iritani:shift}. 
Since the vector fields $\bV_k$ commute each other, we have a 
unique solution $(\tau(\by),\Upsilon(\by,z))$ to \eqref{eq:flow} which takes the form 
\eqref{eq:tau_Upsilon_expansion} along $\{y_k=0, \ \forall k\in G\}$ 
(see Theorem \ref{thm:formal_flow}). 
It suffices to show that $\tau(\by)$, $\Upsilon(\by,z)$ are expanded  
as in \eqref{eq:tau_Upsilon_expansion}. 
Write $\tau(\by) = \sum_{i=1}^m u_i \log y_i + \tau'(\by)$. 
By using the divisor equation \cite[Remark 3.12]{Iritani:shift}, we find that 
\[
\bS_k(\tau(\by);Q) = y^{-\Psi(k)} \bS_k(\tau'(\by); Qy) 
\]
where $y^{-\Psi(k)} = \prod_{i=1}^m y_i^{-\Psi_i(k)}$ 
and $\bS_k(\sigma;Qy)$ is obtained from $\bS_k(\sigma;Q)$ 
by replacing $Q^d$ with $Q^d y_1^{u_1\cdot d} \cdots y_m^{u_m\cdot d}$. 
The differential equation for $\tau$ reads: 
\[
y^{\Psi(k)}\parfrac{\tau'(\by)}{y_k} = S_k(\tau'(\by); Q y) 
\]
Notice that $e_k - \Psi(k)$ belongs to $\hLL_\eff$ and $d(e_k-\Psi(k))=0$, 
where $e_k \in \Z^{\oplus(\bN \cap |\Sigma|)}$ denotes the standard basis vector 
whose $l$th component is $\delta_{k,l}$. This shows, 
by induction on powers of the variables $\{y_k : k\in G\}$, 
that $\tau(\by)$ has an expansion of the form \eqref{eq:tau_Upsilon_expansion}. 
A similar argument shows that $\Upsilon(\by,z)$ also has 
an expansion of the form \eqref{eq:tau_Upsilon_expansion}. 
\end{proof} 

\begin{remark} 
\label{rem:linear_relation} 
By the definition of $\hLL$, 
$\tau(\by)$ and $\Upsilon(\by,z)$ satisfy the following 
equations:
\begin{align*} 
\sum_{k \in \bN\cap |\Sigma|} k \otimes y_k \parfrac{\tau(\by)}{y_k} & = 
\sum_{i=1}^m b_i \otimes u_i 
&& \text{in $\bN \otimes_\Z H^*_T(X_\Sigma)$,} 
\\
\sum_{k\in\bN \cap |\Sigma|} k \otimes y_k \parfrac{\Upsilon(\by,z)}{y_k} 
& = 0 
&& \text{in $\bN \otimes_\Z H^*_\hT(X_\Sigma)$.} 
\end{align*} 
Contracting the first equation 
with $\chi \in \bN^* \otimes \C \cong H^2_T(\pt)$ 
and using \eqref{eq:equiv_parameter}, we obtain 
\[
\sum_{k\in \bN \cap |\Sigma|} (\chi \cdot k) y_k 
\parfrac{\tau(\by)}{y_k} = \sum_{k\in \bN\cap |\Sigma|} 
(\chi \cdot k) y_k S_k(\tau(\by)) = \chi. 
\]
This is a generalization of the linear relation for Batyrev elements 
\cite{Gonzalez-Iritani:Selecta}. 
\end{remark}

\begin{lemma} 
\label{lem:classical_shift} 
The classical limit $Q\to 0$ of the shift operator $\bS_k(\tau)$ is
given by 
\[
\lim_{Q\to 0} \bS_k(\tau) f(u) = 
e^{(\tau(u -\Psi(k)z) -\tau(u))/z} 
f(u - \Psi(k)z)
\prod_{i=1}^m \prod_{c=0}^{\Psi_i(k)-1} (u_i - cz)
\]
where $f(u), \tau = \tau(u)
\in H^*_T(X_\Sigma)$ are equivariant cohomology classes  
expressed as polynomials in $u_1,\dots,u_m$ 
and $u- \Psi(k) z = (u_1 -\Psi_1(k)z, \dots, u_m - \Psi_m(k)z)$. 
In particular we have $
\lim_{Q\to 0} S_k(\tau) = 
\phi_k \exp(-\sum_{i=1}^m\Psi_i(k) \parfrac{\tau(u)}{u_i})$. 
\end{lemma}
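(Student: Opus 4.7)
The plan is to apply Proposition~\ref{prop:shift}(1), which gives $\bS_k(\tau) = M(\tau,z)^{-1}\circ \cS_k \circ M(\tau,z)$, and to compute the $Q\to 0$ limits of the three factors separately. For $M(\tau,z)|_{Q=0}$, only degree-zero stable maps contribute to \eqref{eq:fundsol}: using the factorization $X_{0,n+2,0} = X_\Sigma\times\overline{M}_{0,n+2}$ together with the standard evaluation $\int_{\overline{M}_{0,n+2}}\psi_{n+2}^c = \delta_{c,n-1}$, one finds $(M(\tau,z)\alpha,\beta)|_{Q=0} = \int_{X_\Sigma}e^{\tau/z}\cup\alpha\cup\beta$, so $M(\tau,z)|_{Q=0}$ and its inverse are multiplication by $e^{\pm\tau/z}$.

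The main step is the identification
\[
\cS_k|_{Q=0}\,f(u) = f(u-\Psi(k)z)\prod_{i=1}^m\prod_{c=0}^{\Psi_i(k)-1}(u_i-cz) \quad\text{in } H^*_\hT(X_\Sigma),
\]
which I would verify by comparing restrictions at each $T$-fixed point $p_I$ via the localization description in Definition~\ref{def:shift_Givental}. Let $\sigma_J\in\Sigma$ be the cone containing $k$ in its relative interior, so that $F_{\min} = V(\sigma_J)$. In the case $p_I\in F_{\min}$ (equivalently $J\subseteq I$), the $Q$-weight in $\Delta_I(k)$ is trivial, the Chern-class ratio collapses to $\prod_{i\in I}\prod_{c=0}^{\Psi_i(k)-1}(u_i|_{p_I}-cz)$ (matching the right-hand side at $p_I$, since $\Psi_i(k)=0$ for $i\notin J$), and the shift $e^{-kz\partial_\lambda}$, transcribed via \eqref{eq:equiv_parameter}, sends $u_i|_{p_I}$ to $u_i|_{p_I}-\Psi_i(k)z$ for $i\in I$, producing $f(u-\Psi(k)z)|_{p_I}$. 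In the case $p_I\notin F_{\min}$, the class $\sigma_I(k)-\sigma_{\min}(k)$ is a nonzero effective class (by Lemma~\ref{lem:minimal_section} and the inclusion $H_2(X_\Sigma)\subset H_2^T(X_\Sigma)\cong\Z^m$), killing the left-hand side at $Q=0$; on the other hand, any $j\in J\setminus I$ satisfies $\Psi_j(k)>0$ and $u_j|_{p_I}=0$, so the $c=0$ factor in the product forces the right-hand side to vanish as well.

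Composing and rearranging the three factors yields
\[
\bS_k(\tau)|_{Q=0}\,f(u) = e^{(\tau(u-\Psi(k)z)-\tau(u))/z}\,f(u-\Psi(k)z)\prod_{i=1}^m\prod_{c=0}^{\Psi_i(k)-1}(u_i-cz),
\]
which is the main formula. The Seidel-element statement then follows by sending $z\to 0$: the Chern-class product limits to $\prod_i u_i^{\Psi_i(k)}=\phi_k$, while Taylor expansion gives $(\tau(u-\Psi(k)z)-\tau(u))/z \to -\sum_i\Psi_i(k)\partial_{u_i}\tau$. I expect the main obstacle to be the localization verification of the formula for $\cS_k|_{Q=0}$, particularly the case $p_I\notin F_{\min}$, where both sides must be shown to vanish by separate mechanisms---the $Q$-factor on the left (using the effectivity consequence of Lemma~\ref{lem:minimal_section}) and a combinatorial zero $u_j|_{p_I}=0$ on the right.
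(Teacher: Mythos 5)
Your proof is correct, but it takes a genuinely different route from the paper's. The paper disposes of this lemma in two lines: it cites the case $k=b_i$ from the earlier work \cite[Lemma 4.5]{Iritani:shift} and then reduces the general case to products of the $\bS_{b_i}(\tau)$ via Corollary \ref{cor:Sk} (which rests on $d(k,l)=0$ for $k,l$ in a common cone). You instead give a self-contained computation: conjugating by the fundamental solution via Proposition \ref{prop:shift}(1), identifying $M(\tau,z)|_{Q=0}$ with $e^{\tau/z}\cup$ through the degree-zero invariants and $\int_{\overline{M}_{0,n+2}}\psi_{n+2}^c=\delta_{c,n-1}$, and computing $\cS_k|_{Q=0}$ fixed point by fixed point from Definition \ref{def:shift_Givental}. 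All the individual steps check out: the identification $F_{\min}=V(\sigma_J)$, the collapse of the weight ratio to $\prod_{c=0}^{\Psi_i(k)-1}(u_i-cz)$ when $-\alpha\cdot k=-\Psi_i(k)\le 0$, the transcription of $e^{-kz\partial_\lambda}$ as $u_i|_{p_I}\mapsto u_i|_{p_I}-\Psi_i(k)z$ (using that $\{u_j|_{p_I}\}_{j\in I}$ is dual to $\{b_j\}_{j\in I}$), and the two separate vanishing mechanisms at $p_I\notin F_{\min}$; the passage from agreement at all fixed points to equality in $H^*_{\hT}(X_\Sigma)$ uses the injectivity of $\iota^*$ guaranteed by equivariant formality, which you should state explicitly. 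What your approach buys is independence from the external reference and from Corollary \ref{cor:Sk}, and it makes visible exactly the localization data $\Delta_x(k)$ that the paper itself recomputes later in the proof of Proposition \ref{prop:ext_I}; what it costs is length, and a small amount of care in justifying that the nonzero effectivity of $\sigma_{p_I}(k)-\sigma_{\min}(k)$ for $p_I\notin F_{\min}$ follows from the explicit identification $\sigma_{p_I}(k)=(-u_i(p_I)\cdot k)_{i=1}^m$ in $\Z^m$ (argued as in Lemma \ref{lem:minimal_section}) together with $\Eff(E_k)^{\sec}=\sigma_{\min}(k)+\Eff(X_\Sigma)$ --- a point you gesture at but could spell out.
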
 
\begin{proof} 
This is proved when $k= b_i$ in \cite[Lemma 4.5]{Iritani:shift}. 
Note that we considered a redundant $(\C^\times)^m$-action on $X_\Sigma$ 
in \cite{Iritani:shift} and there is some difference in notation. 
The conclusion follows from Corollary \ref{cor:Sk} 
and the case where $k=b_i$. 
\end{proof}

We consider the co-ordinates 
$\bt=\{t_k:k\in \bN\cap |\Sigma|\}$ on the equivariant 
cohomology $H_T^*(X_\Sigma)$ given by 
\begin{equation} 
\label{eq:bt} 
\bt \mapsto \tau = \sum_{k\in \bN \cap |\Sigma|} t_k \phi_k 
\in H^*_T(X_\Sigma) 
\end{equation} 
where $\{\phi_k\}$ is the basis in Lemma \ref{lem:basis}. 
We define the formal neighbourhood of the origin of $H_T^*(X_\Sigma)[\![Q]\!]$ 
to be $\Spf(\C[\![Q]\!][\![\bt]\!])$. 
The mirror map identifies $\Spf(\C[\![Q]\!][\![\bt]\!])$ with 
the formal neighbourhood $\Spf (\C[\![Q]\!][\![\by]\!])$ 
of the base point $\ybase$ in the $\by$-parameter space. 
\begin{equation}
\label{eq:y_origin}
\ybase := \{y_1  = \cdots  = y_m =1, \ y_k = 0 \ (\forall k\in G)\}.  
\end{equation}

\begin{lemma} 
\label{lem:mirror_map} 
The mirror map $\by \mapsto \tau(\by)$ in Proposition \ref{prop:mirror_map} 
defines an isomorphism 
between $\Spf( \C[\![Q]\!][\![\by]\!])$ and $\Spf(\C[\![Q]\!][\![\bt]\!])$. 
\end{lemma}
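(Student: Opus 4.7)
The plan is to apply the formal inverse function theorem to the continuous $\C[\![Q]\!]$-algebra homomorphism $\varphi\colon \C[\![Q]\!][\![\bt]\!] \to \C[\![Q]\!][\![\by]\!]$ defined by $t_k \mapsto t_k(\by)$, where $\tau(\by) = \sum_{k\in\bN\cap|\Sigma|} t_k(\by)\,\phi_k$ expands the mirror map in the basis $\{\phi_k\}$ of Lemma \ref{lem:basis}. First I would check that $\varphi$ maps the maximal ideal into the maximal ideal: evaluating \eqref{eq:tau_Upsilon_expansion} at $\by = \ybase$ kills every monomial $\by^\ell$ with $\ell_k > 0$ for some $k\in G$, and the only remaining terms have $\ell \in \LL \setminus \{0\}$ and carry a factor $Q^{d(\ell)}$ with $d(\ell)\ne 0$ effective; hence $\tau(\ybase) \in (Q)$. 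Since both rings are $Q$-adically complete, it then suffices to verify that the reduction $\overline\varphi\colon \C[\![\bt]\!] \to \C[\![\by]\!]$ modulo $Q$ is an isomorphism.

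The core computation is the linearization of $\overline\varphi$ at the base point. Applying the ``In particular'' clause of Lemma \ref{lem:classical_shift} with $\tau = 0$ (so the exponential factor becomes $1$) yields $\lim_{Q\to 0} S_k(0) = \phi_k$ for every $k \in \bN\cap|\Sigma|$. Combining this with the flow equation \eqref{eq:flow} and the observation that $\tau(\ybase)|_{Q=0} = 0$, I obtain
\[
\parfrac{\tau(\by)}{\log y_i}\bigg|_{\ybase,\,Q=0} = y_i\,S_{b_i}(\tau(\by))\big|_{\ybase,\,Q=0} = \phi_{b_i} = u_i,\qquad
\parfrac{\tau(\by)}{y_k}\bigg|_{\ybase,\,Q=0} = \phi_k\quad (k\in G).
\]
In the $\{\phi_k\}$-expansion this says $t_{b_i}(\by) \equiv \log y_i$ and $t_k(\by) \equiv y_k$ ($k\in G$) modulo the square of the maximal ideal of $\C[\![\by]\!]$ (and modulo $Q$).

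Therefore $\overline\varphi$ sends each generator of $\C[\![\bt]\!]$ to itself plus terms of strictly higher order in the natural generators $\{\log y_1,\dots,\log y_m\}\cup\{y_k : k\in G\}$ of $\C[\![\by]\!]$; this is a ``unipotent'' change of variables, hence an isomorphism by the formal inverse function theorem. The inverse is constructed by recursive substitution---since the leading Jacobian is the identity, no matrix inversion is required at any step, and each coefficient of $\overline\varphi^{-1}$ stabilizes after finitely many iterations. Lifting back via $Q$-adic completeness then gives that $\varphi$ itself is an isomorphism. The main subtlety I would worry about is the bookkeeping for the countably many variables in $\by$, but since every element of $\C[\![\by]\!]$ involves only finitely many $y_k$ in each monomial, the recursion at any fixed total degree is a finite problem, so it converges in the linear topology of \S\ref{subsec:power_series} without difficulty.
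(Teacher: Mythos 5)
Your proposal is correct and follows essentially the same route as the paper: one checks the map is a well-defined morphism of formal schemes, computes the Jacobian at $(\ybase, Q=0)$ via the flow equation \eqref{eq:flow} and the classical limit $\lim_{Q\to 0}S_k(0)=\phi_k$ from Lemma \ref{lem:classical_shift} (obtaining exactly \eqref{eq:tau_Jacobi}), and concludes by the formal inverse function theorem (Theorem \ref{thm:formal_inverse_function}). Your intermediate reduction modulo $Q$ is only a presentational variant --- the paper applies Theorem \ref{thm:formal_inverse_function} directly over the base $R=\C[\![Q]\!]$ --- and does not change the substance of the argument.
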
 
\begin{proof} 
Expand the mirror map as 
$\tau(\by) = \sum_{k\in \bN\cap |\Sigma|} t_k(\by) \phi_k$. 
One can check using the expansion \eqref{eq:tau_Upsilon_expansion} 
that $\lim_{|k|\to \infty} t_k(\by) = 0$ 
in the topology of $\C[\![Q]\!][\![\by]\!]$ (see \S\ref{subsec:power_series}). 
We also have $t_k(\by) |_{\by= \ybase, Q=0} = 0$. 
Therefore the map $\by \mapsto \tau(\by)$ defines a well-defined 
morphism $\Spf \C[\![Q]\!][\![\by]\!]\to \Spf \C[\![Q]\!][\![\bt]\!]$ 
of formal schemes. 
By Lemma \ref{lem:classical_shift}, we have  
\begin{equation} 
\label{eq:tau_Jacobi}
\left. \parfrac{\tau(\by)}{y_k} \right|_{\by = \ybase, Q=0} 
= \lim_{Q\to 0} S_k(0) = \phi_k \qquad 
\text{for } k \in \bN \cap |\Sigma|. 
\end{equation} 
The conclusion follows by the formal inverse function theorem 
(Theorem \ref{thm:formal_inverse_function}). 
\end{proof} 

\begin{remark} 
The formal neighbourhood $\Spf \C[\![Q]\!][\![\bt]\!]$ 
of the origin in $H^*_T(X_\Sigma)[\![Q]\!]$ is an infinite dimensional 
non-Noetherian and non-adic formal scheme. 
All formal schemes in this paper are the formal spectra of 
admissible rings (in the sense of \cite[Chapter 1, \S 10]{EGA:1960}), 
and thus these spaces make sense in Grothendieck's theory. 
See \cite{McQuillan:formalformal, Yasuda:non-adic} 
for more recent approaches to non-Noetherian 
formal schemes. 
\end{remark} 

\begin{lemma} 
\label{lem:homogeneity}
The cohomology classes $\tau_\ell$, $\Upsilon_\ell(z)$ 
appearing in equation \eqref{eq:tau_Upsilon_expansion} are homogeneous. We have 
$\deg \tau_\ell = 2 ( 1 - \sum_{k\in \bN\cap|\Sigma|} \ell_k)$ 
and $\deg \Upsilon_\ell(z) = - 2 \sum_{k\in \bN \cap |\Sigma|} \ell_k$.  
\end{lemma}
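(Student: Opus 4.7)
My approach is to introduce a grading on $H^*_\hT(X_\Sigma)[\![Q]\!][\![\by]\!]$ under which both sides of the defining ODE system \eqref{eq:flow} are visibly homogeneous, and then extract the claimed degrees of $\tau_\ell$ and $\Upsilon_\ell(z)$ from the uniqueness statement in Proposition \ref{prop:mirror_map}. Concretely, I will assign $u_i, \lambda_j, z$ degree $2$, $\log y_i$ degree $0$ (for $1 \le i \le m$), $y_k$ degree $2(1 - |k|)$ for $k \in G$, and $Q^d$ degree $2 c_1(X_\Sigma) \cdot d = 2 \sum_i (u_i \cdot d)$. Using $u_i \cdot d(\ell) = \sum_k \ell_k \Psi_i(k)$ together with $\sum_i \Psi_i(k) = |k|$, each monomial $\by^\ell Q^{d(\ell)}$ (with $\ell \in \hLL_\eff$) then has total degree $2 \sum_k \ell_k$. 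Hence the conclusions of the lemma become equivalent to the assertions that $\tau(\by)$ is homogeneous of degree $2$ (matching the leading term $\sum u_i \log y_i$) and $\Upsilon(\by, z)$ is homogeneous of degree $0$ (matching the leading $1$).

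Next I would establish two grading-compatibility facts. First, the Seidel element $S_k(\tau)$, viewed via the coordinates \eqref{eq:bt} as a power series in $\{t_k\}$ with $t_k$ assigned degree $2(1 - |k|)$ (so that $\tau = \sum_k t_k \phi_k$ becomes homogeneous of degree $2$, using $\deg \phi_k = 2|k|$), is homogeneous of degree $2|k|$. Second, the operator $[z^{-1}\bS_k(\tau)]_+$ shifts degree by $2|k| - 2$. Both statements follow from the genus-zero virtual dimension formula applied to the equivariant Gromov-Witten correlators on $E_k$ defining $\tbS_k(\tau)$, after inserting $\sigma_{\min}(k) = -\Psi(k)$ from Lemma \ref{lem:minimal_section} and computing $c_1(E_k)$ via the short exact sequence for the vertical tangent bundle; the normalization $Q^{\hd - \sigma_{\min}(k)}$ in that definition is precisely what produces the shift $2|k|$. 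The $Q = 0$ classical limit of Lemma \ref{lem:classical_shift}, in which $S_k(\tau)$ reduces to $\phi_k e^{-\sum_i \Psi_i(k) \partial_{u_i} \tau}$ of manifest degree $2|k|$, serves as a useful cross-check.

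Granted these facts, one checks directly that the equations \eqref{eq:flow} respect the grading: the left-hand side $\partial_{y_k} \tau$ has degree $2 - 2(1-|k|) = 2|k|$, matching $\deg S_k(\tau)$, and $\partial_{y_k} \Upsilon$ has degree $-2(1-|k|) = 2|k| - 2$, matching $\deg([z^{-1}\bS_k(\tau)]_+ \Upsilon)$. Decomposing the solutions into homogeneous parts $\tau(\by) = \sum_D \tau^{(D)}(\by)$ and $\Upsilon(\by, z) = \sum_D \Upsilon^{(D)}(\by, z)$, the ODE system splits by degree, and the pair $(\tau^{(2)}, \Upsilon^{(0)})$ satisfies the same system with the same leading asymptotics $\sum u_i \log y_i$ and $1$ as the original solution. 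The uniqueness statement of Proposition \ref{prop:mirror_map} then forces $\tau = \tau^{(2)}$ and $\Upsilon = \Upsilon^{(0)}$, which is the assertion of the lemma. The main obstacle is the grading bookkeeping for $S_k$ and $\bS_k$ in the second step; once the normalization $Q^{\hd - \sigma_{\min}(k)}$ and the equivariant identity $c_1^T(X_\Sigma) = \sum_i u_i$ are properly accounted for, the required degree shifts are forced by the standard equivariant dimension formula.
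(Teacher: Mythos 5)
Your plan is correct and rests on exactly the computation the paper uses: the homogeneity of the shift operator, extracted from the genus-zero virtual dimension formula on $E_k$ together with $c_1(E_k)\cdot\sigma_{\min}(k)=2-|k|$ and the normalization $Q^{\hd-\sigma_{\min}(k)}$ in the definition of $\tbS_k(\tau)$. The differences from the paper are largely a matter of normalization. You use the ``standard'' grading $\deg Q^d = 2c_1(X_\Sigma)\cdot d$, $\deg t_k = 2-\deg\phi_k$, under which $\bS_k(\tau)$ shifts degree by $2|k|$; the paper instead sets $\deg Q^d=0$ and $\deg y_k=2$ uniformly (the grading adapted to $\cE_\by$), under which $\bS_k(\tau)$ is a degree-zero endomorphism. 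The two conventions assign the same degree $2\sum_k\ell_k$ to every monomial $\by^\ell Q^{d(\ell)}$ with $\ell\in\hLL_\eff$, so they lead to the same conclusion; yours handles the leading term $\sum_i u_i\log y_i$ without treating it as an exception, while the paper's avoids introducing a nontrivial Novikov degree. One step should be tightened: since $\tau\mapsto S_k(\tau)$ is nonlinear, the system \eqref{eq:flow} does not literally ``split by degree'' when you decompose $\tau=\sum_D\tau^{(D)}$ --- the degree-$2|k|$ component of $S_k(\tau)$ receives cross-contributions from pairs of components $\tau^{(D)}$ with $D\neq 2$, so it is not immediate that $(\tau^{(2)},\Upsilon^{(0)})$ solves the same system. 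The intended argument is sound but should be phrased either as the paper does (the coefficients $\tau_\ell$, $\Upsilon_\ell(z)$ are determined recursively by \eqref{eq:flow} through degree-preserving operations, so homogeneity propagates by induction on $\ell$), or via the equivalent scaling-symmetry form: rescaling all variables according to their degrees sends the solution of \eqref{eq:flow} to another solution with the same asymptotics \eqref{eq:tau_Upsilon_expansion}, and the uniqueness in Proposition \ref{prop:mirror_map} then forces homogeneity.
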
 
\begin{proof} 
Note that the lemma implies the homogeneity of $\tau(\by)$ and 
$\Upsilon(\by,z)$ with respect to the degree $\deg y_k = 2$ of 
variables 
(except for the leading term $\sum_{i=1}^m u_i \log y_i$ of $\tau(\by)$). 
We start with the homogeneity of $\bS_k(\tau)$. 
Let $\gamma_1,\dots,\gamma_s$ be classes in $H_T^*(X_\Sigma)$. 
We claim that 
for $\tau = \sum_{i=1}^m u_i \log y_i  + \sum_{j=1}^s x_j \gamma_j$, 
$\bS_k(\tau)$ is a homogeneous endomorphism of degree $0$ if we define  
the degree of $x_j$ to be $2 - \deg \gamma_j$ and the degree of 
$y_i$ to be $2$. 
Using the divisor equation \cite[Remark 3.12]{Iritani:shift}, 
we can write $(\tbS_k(\tau)\alpha,\beta)$ 
as the sum of the following terms: 
\[
\corr{\iota_{0*}\alpha,\hat{\gamma}_{j_1},\dots, \hat{\gamma}_{j_n}, 
\iota_{\infty*}\beta}_{0,n+2,d+\sigma_{\min}}^{E_k,\hT} 
\frac{Q^d}{n!} x_{j_1} \cdots x_{j_n} 
\prod_{i=1}^m y_i^{u_i\cdot d - \Psi_i(k)} 
\]
with $d\in \Eff(X_\Sigma)$, $1\le j_1,\dots,j_n \le s$ and $n\ge 0$. 
By the virtual dimension formula of the moduli space of stable maps, 
the degree of this term is 
$\deg \alpha + \deg \beta - 2 \dim X_\Sigma$, 
where we used $c_1(E_k) \cdot \sigma_{\min} = 2 -|k|$. 
Therefore the claim follows. 
The lemma follows from this claim and the recursive construction 
of $\tau_\ell$, $\Upsilon_\ell(z)$ in \cite[Proposition 4.6]{Iritani:shift} 
and in Proposition \ref{prop:mirror_map}. 
\end{proof} 

Define the Euler vector field on the $\by$-space and on 
the $\bt$-space (i.e.~$H^*_T(X_\Sigma)$)  
by the formula: 
\begin{equation} 
\label{eq:Euler} 
\cE_\by = \sum_{k\in \bN\cap |\Sigma|} y_k \parfrac{}{y_k},
\qquad 
\cE_\bt = \sum_{i=1}^m \parfrac{}{t_{b_i}} + 
\sum_{k\in \bN \cap |\Sigma|}
\left(1 - \frac{1}{2} \deg \phi_k\right) t_k \parfrac{}{t_k}. 
\end{equation} 
Note that $\frac{1}{2} \deg \phi_k = |k| := \sum_{i=1}^m \Psi_i(k)$. 
Let $\Gr_0 \colon H_\hT^*(X_\Sigma) \to 
H_\hT^*(X_\Sigma)$ be the grading operator defined 
by $\Gr_0(\alpha) = p \alpha$ for $\alpha 
\in H_\hT^{2p}(X_\Sigma)$. 
Note that $\Gr_0(z\alpha) = z \alpha+ z \Gr_0(\alpha)$. 
Lemma \ref{lem:homogeneity} together with its proof implies 
the following: 
\begin{lemma} 
\label{lem:grading}
The mirror map $\tau(\by)$ preserves the Euler vector 
field: $\tau_* \cE_\by = \cE_\bt$. Moreover, we have 
\[  
\left[\cE_\bt + \Gr_0, \bS_k(\tau) \right] =0,  
\qquad 
(\cE_\by + \Gr_0) \Upsilon(\by,z) = 0. 
\]
In particular we have 
$[\cE_\by + \Gr_0, \bS_k(\tau(\by))]= 0$. 
\end{lemma}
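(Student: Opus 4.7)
My approach is to derive all three assertions from the homogeneity proved in Lemma \ref{lem:homogeneity}, with the final ``in particular'' claim following by the chain rule.

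For the first identity $\tau_*\cE_\by = \cE_\bt$, I would expand $\tau(\by) = \sum_{i=1}^m u_i \log y_i + \sum_{\ell\neq 0} \tau_\ell \by^\ell Q^{d(\ell)}$ and write $\tau_\ell = \sum_k c_{\ell,k} \phi_k$ in the $\C$-basis of Lemma \ref{lem:basis}. The homogeneity $\deg \tau_\ell = 2(1 - \sum_j \ell_j)$ forces $c_{\ell,k} = 0$ unless $\sum_j \ell_j = 1 - |k|$, so
\[
\cE_\by t_k(\by) = \delta_{k\in\{b_1,\dots,b_m\}} + (1-|k|)\,t_k(\by),
\]
and comparison with \eqref{eq:Euler} yields $\tau_*\cE_\by = \cE_\bt$ immediately.

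For $[\cE_\bt + \Gr_0, \bS_k(\tau)] = 0$, I would invoke the proof of Lemma \ref{lem:homogeneity}, which shows that in the parametrisation $\tau = \sum_i u_i \log y_i + \sum_j x_j \gamma_j$ (with $\deg x_j = 2 - \deg \gamma_j$ and $\deg y_i = 2$), the operator $\bS_k(\tau)$ is homogeneous of degree $0$. This is the $\C^\times$-equivariance of $\bS_k(\tau)$ under the one-parameter group rescaling $x_j$ by $\lambda^{1 - \deg\gamma_j/2}$ and translating $\log y_i \mapsto \log y_i + \log \lambda$; its infinitesimal generator on the parameter-plus-cohomology space is exactly $\cE_\bt + \Gr_0$, where the translational piece $\sum_i \partial_{t_{b_i}}$ in $\cE_\bt$ comes from the action on the logarithmic leading terms $t_{b_i} = \log y_i + \cdots$. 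Commutation with this generator gives the desired identity.

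For $(\cE_\by + \Gr_0)\Upsilon(\by,z) = 0$, Lemma \ref{lem:homogeneity} gives $\deg\Upsilon_\ell(z) = -2\sum_k\ell_k$, hence $\cE_\by(\by^\ell Q^{d(\ell)}) = (\sum_k\ell_k)\by^\ell Q^{d(\ell)}$ and $\Gr_0(\Upsilon_\ell(z)) = -(\sum_k\ell_k)\Upsilon_\ell(z)$, which cancel term by term in the expansion \eqref{eq:tau_Upsilon_expansion}. For the ``in particular'' statement, the chain rule combined with $\tau_*\cE_\by = \cE_\bt$ gives $\cE_\by \bS_k(\tau(\by)) = (\cE_\bt \bS_k)(\tau)|_{\tau=\tau(\by)}$, whence $[\cE_\by + \Gr_0, \bS_k(\tau(\by))] = [\cE_\bt + \Gr_0, \bS_k(\tau)]|_{\tau=\tau(\by)} = 0$. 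The main obstacle is keeping track of the degree conventions consistently: specifically, one must verify that $Q$ is treated as having degree $0$ under $\Gr_0$ and that the translational piece of $\cE_\bt$ genuinely corresponds to the logarithmic leading terms of $\tau(\by)$, so that the homogeneity of Lemma \ref{lem:homogeneity} translates into the asserted commutation relations.
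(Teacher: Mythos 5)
Your proposal is correct and follows exactly the route the paper intends: the paper offers no separate argument for Lemma \ref{lem:grading} beyond the sentence that it follows from Lemma \ref{lem:homogeneity} ``together with its proof,'' and your write-up supplies precisely the missing details --- the degree constraint $\sum_j \ell_j = 1 - |k|$ on the coefficients of $\tau_\ell$ giving $\tau_*\cE_\by = \cE_\bt$, the degree-zero homogeneity of $\bS_k(\tau)$ from the proof of Lemma \ref{lem:homogeneity} giving the first commutator, termwise cancellation for $\Upsilon$, and the chain rule for the final claim. Your closing caveats (degree of $Q$ set to zero, translational part of $\cE_\bt$ matching the logarithmic terms) are exactly the conventions the paper adopts, so nothing is missing.
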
 

\begin{remark}
\label{rem:inverse_mirror} 
The relation $\tau_* \cE_\by = \cE_\bt$ implies the following equality: 
\begin{equation} 
\label{eq:inverse_mirror_map} 
\sum_{k\in \bN \cap |\Sigma|} y_k S_k(\tau) = 
\sum_{k\in \bN \cap |\Sigma|} y_k \parfrac{\tau(\by)}{y_k} 
= c_1^T(X_\Sigma) + 
\sum_{k\in \bN \cap |\Sigma|} (1-|k|)t_k \phi_k. 
\end{equation} 
Since $\{S_k(\tau): k\in \bN\cap |\Sigma|\}$ forms a 
$\C[\![Q]\!][\![\bt]\!]$-basis of $H_T^*(X_\Sigma)[\![Q]\!][\![\bt]\!]$, 
the inverse mirror map $y_k(\bt)$ is obtained as the 
expansion coefficients of the right-hand side in $\{S_k(\tau)\}$. 
\end{remark} 

\begin{remark}[divisor equation]
Let $Q_i\parfrac{}{Q_i}$ be the differential 
operator in the Novikov variable such that 
$(Q_i\parfrac{}{Q_i}) Q^d = (u_i \cdot d ) Q^d$. 
By \eqref{eq:tau_Upsilon_expansion}, 
$\tau(\by)$ and $\Upsilon(\by,z)$ satisfy the following 
analogue of the divisor equation: 
\begin{align*} 
Q_i\parfrac{}{Q_i} \tau(\by)
= \bD_i \tau(\by) - u_i, \qquad 
Q_i\parfrac{}{Q_i} \Upsilon(\by,z)= \bD_i \Upsilon(\by,z), 
\end{align*} 
where we set $\bD_i = \sum_{k\in \bN\cap |\Sigma|} \Psi_i(k) 
y_k \parfrac{}{y_k}$.  
The divisor equation for $\bS_k(\tau)$ can be written 
in the following form: 
\begin{align*} 
Q_i \parfrac{}{Q_i} \bS_k(\tau) & = \left(
\parfrac{}{t_{b_i}} + \Psi_i(k) \right) \bS_k(\tau),  \\ 
Q_i \parfrac{}{Q_i} \bS_k(\tau(\by)) 
& = (\bD_i + \Psi_i(k)) \bS_k(\tau(\by)).   
\end{align*} 
\end{remark} 
\begin{remark} 
In this paper, we do not consider the degree of the Novikov 
variable; it is simply set to be zero. 
Some people introduce the degree 
$\deg Q^d = c_1(X_\Sigma) \cdot d$, 
which corresponds to the part $\sum_{i=1}^m\parfrac{}{t_{b_i}}$ 
of the Euler vector field $\cE_\bt$ via the divisor equation. 
\end{remark}

\subsection{Gauss-Manin system}
\label{subsec:GM}
Regarding $\Eff(X_\Sigma) \subset H_2(X,\Z) \cong \LL$ 
as a subset of $\Z^m$, we consider the semigroup 
\[
\M = \Eff(X_\Sigma) + (\Z_{\ge 0})^m. 
\]
For a ring $R$, we introduce a certain completion 
$R\{\M\}$ of the semigroup ring $R[\M]$. 
We write $w_i$ for the element of $R[\M]$ corresponding 
to the $i$th basis vector $e_i\in (\Z_{\ge 0})^m$ 
and write $Q^d$ for the element of $R[\M]$ corresponding 
to $d\in \Eff(X_\Sigma)$. The uncompleted Novikov ring 
$R[Q] := R[\Eff(X_\Sigma)]$ is naturally contained in $R[\M]$. 
Consider the ideal of $R[\M]$ generated by $Q^d$ 
with $d\in \Eff(X_\Sigma)\setminus \{0\}$ and 
write $R\{\M\}$ for the completion with respect 
to this ideal. Then $R\{\M\}$ is an $R[\![Q]\!]$-algebra. 

The mirror Landau-Ginzburg model is defined on the 
space $\Spf(\C\{\M\})$.  
We introduce a convenient co-ordinate system $(x,Q)$ on it. 
We consider the semigroup ring $\C[\bN \cap |\Sigma|]$ 
of $\bN \cap |\Sigma|$ and denote by $x^k \in \C[\bN\cap |\Sigma|]$ 
for the element corresponding to $k\in \bN \cap |\Sigma|$. 
Choose a maximal cone $\sigma_{I_0}$ of $\Sigma$. 
Since $\{b_i :i\in I_0\}$ is a $\Z$-basis of $\bN$, we can 
define a splitting $\varsigma \colon \bN \to \Z^m$ of the 
fan sequence \eqref{eq:fanseq} by sending 
$b_i\in \bN$ with $i\in I_0$ to $e_i\in \Z^m$. 
This splitting defines an embedding 
$\C[\M] \hookrightarrow \C[Q]\otimes \C[\bN \cap |\Sigma|]$ 
of $\C[Q]$-algebras by the assignment: 
\[
w_j = Q^{e_j - \varsigma(b_j)} x^{b_j}.  
\]
Note that $e_j - \varsigma(b_j)\in \Z^m$ lies in $\Eff(X_\Sigma)$ 
(see \eqref{eq:eff_cone}). 
This exhibits $\C\{\M\}$ as a subalgebra 
of $\C[\bN\cap |\Sigma|][\![Q]\!]$. 
For $k\in \bN \cap |\Sigma|$, we write 
\[
w^{\Psi(k)} := \prod_{i=1}^m w_i^{\Psi_i(k)} = Q^{\beta(k)} x^k  
\]
where $\beta(k) := \Psi(k) - \varsigma(k) \in \Eff(X_\Sigma)$.  
Then we have $\C\{\M\} = (\bigoplus_{k\in \bN\cap |\Sigma|} 
\C w^{\Psi(k)})[\![Q]\!]$, i.e.~$\{w^{\Psi(k)}:k \in \bN\cap|\Sigma|\}$ 
is a topological $\C[\![Q]\!]$-basis of $\C\{\M\}$. 
We define the \emph{universal Landau-Ginzburg potential} by 
\[
F(x;\by) := \sum_{k\in \bN\cap |\Sigma|} y_k w^{\Psi(k)} 
= \sum_{k\in \bN \cap |\Sigma|} y_k Q^{\beta(k)} x^k.  
\]
This parameterizes all elements of $\C\{\M\}$ and belongs 
to $\C\{\M\}[\![\by]\!]$. 
We also consider the equivariant version:   
\[
F_\lambda(x;\by) = F(x;\by) - \lambda \cdot \log x 
\]
where $\lambda \in \bN\otimes \C = \Lie(T)$ is an 
equivariant parameter and $\log x$ is regarded as a 
point in $\bN^* \otimes\C$. Choosing an auxiliary basis 
of $\bN$, we write $x = (x_1,\dots,x_D)$ and 
$\lambda =(\lambda_1,\dots,\lambda_D)$ 
so that $\lambda \cdot \log x = \sum_{i=1}^D \lambda_i \log x_i$ 
where $D = \rank \bN$. 

\begin{definition} 
Set $\omega = \frac{dx_1}{x_1} \cdots \frac{dx_D}{x_D}$ 
and $\omega_i = \iota(x_i\parfrac{}{x_i}) \omega$. 
The \emph{(logarithmic) Gauss-Manin system} $\GM(F_\lambda)$ of 
$F_\lambda(x;\by)$ is defined to be the cokernel of the map 
\[
z d + dF_\lambda \wedge \colon 
\bigoplus_{i=1}^D 
\C[z]\{\M\}[\![\by]\!][\lambda] \omega_i 
\to \C[z]\{\M\}[\![\by]\!][\lambda] \omega. 
\]
where $d$ is the derivation with respect to the $x$-variables 
which is linear over the ground ring $\C[z][\![Q]\!][\![\by]\!][\lambda]$ 
and defined on generators by 
$d w^{\Psi(k)} = \sum_{i=1}^D k_i Q^{\beta(k)} x^k dx_i/x_i
= \sum_{i=1}^D k_i w^{\Psi(k)} dx_i/x_i$. 
The grading operator on the Gauss-Manin system is given by: 
\[
\Gr 
= \cE_\by  + z\parfrac{}{z} + \sum_{i=1}^D \lambda_i \parfrac{}{\lambda_i}, 
\]
where $\cE_\by$ is the Euler vector field in \eqref{eq:Euler}. 
\end{definition}

\begin{remark} 
The term $\lambda \cdot \log x$ in $F_\lambda(x;\by)$ depends on 
the choice of a splitting $\varsigma \colon\bN \to \Z^m$, but 
the Gauss-Manin system $\GM(F_\lambda)$ itself does not. 
\end{remark} 

\begin{remark} 
\label{rem:oscillatory} 
Each element $f(z,x,\by)\omega\in \GM(F_\lambda)$ 
associates the following oscillatory integral: 
\[
\int e^{F_\lambda(x;\by)/z} f(z,x,\by) \omega. 
\]
The image of $z d + dF_\lambda\wedge$ corresponds to 
exact oscillatory forms. 
The cohomology with respect to the twisted de Rham differential 
$zd + dF_\lambda \wedge$ has been used in singularity theory 
\cite{SaitoK:higherresidue, Sabbah:tame} and 
also in the context of the GKZ system 
\cite{Adolphson, Borisov-Horja:bbGKZ, Reichelt-Sevenheck:logFrob, 
Reichelt-Sevenheck:nonaffine}. 
\end{remark} 

The image of $z d + dF_\lambda \wedge$ is topologically generated by 
\begin{equation} 
\label{eq:lambda_action} 
(zd + dF_\lambda \wedge) w^{\Psi(l)} \omega_i = 
\left(z l_i w^{\Psi(l)} + \sum_{k\in \bN\cap |\Sigma|} k_i y_k w^{\Psi(k)+\Psi(l)} 
- \lambda_i w^{\Psi(l)}\right) \omega  
\end{equation} 
as a $\C[z][\![Q]\!][\![\by]\!][\lambda]$-module 
where $k_i$, $l_i$ denote the $i$th components of $k,l\in \bN\cap |\Sigma|$ 
(with respect to the auxiliary basis of $\bN$). 
This gives a relation in $\GM(F_\lambda)$ and 
can be thought of as \emph{defining} the action 
of $\lambda_i$. Therefore, we have:  
\[
\GM(F_\lambda) \cong \C[z]\{\M\}[\![\by]\!] \omega.  
\] 
\begin{proposition} 
There exists a unique connection
$\nabla_{\parfrac{}{y_k}} \colon\GM(F_\lambda) 
\to z^{-1} \GM(F_\lambda)$ in the $\by$-direction  
which satisfies 
\begin{align*} 
\nabla_{\parfrac{}{y_k}} (f s) & = 
\parfrac{f}{y_k} s + f \nabla_{\parfrac{}{y_k}} s,  \\ 
\left[\nabla_{\parfrac{}{y_k}}, \nabla_{\parfrac{}{y_l}}\right] & =0, 
\qquad 
\left[\nabla_{\parfrac{}{y_k}}, \Gr\right] = \nabla_{\parfrac{}{y_k}}, 
\\
\nabla_{\parfrac{}{y_k}} w^{\Psi(l)}\omega &= z^{-1} \parfrac{F_\lambda}{y_k} 
w^{\Psi(l)} \omega
= z^{-1} w^{\Psi(k) + \Psi(l)} \omega
\end{align*} 
for $f\in \C[z][\![Q]\!][\![\by]\!][\lambda]$, $s\in \GM(F_\lambda)$, 
$k,l\in \bN\cap |\Sigma|$. We call $\nabla$ the \emph{Gauss-Manin 
connection}. 
\end{proposition}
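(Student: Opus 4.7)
The plan is to define the connection first on the free module $\C[z]\{\M\}[\![\by]\!][\lambda]\omega$ (and on the modules generated by the $\omega_i$) by the formula motivated by the oscillatory integral interpretation in Remark \ref{rem:oscillatory}: differentiating $e^{F_\lambda/z}f\omega$ in $y_k$ produces $e^{F_\lambda/z}(\partial_{y_k}f + z^{-1}(\partial_{y_k}F_\lambda)f)\omega$. Accordingly, I would set
\[
\tilde\nabla_{\parfrac{}{y_k}}(f\omega) := \parfrac{f}{y_k}\omega + z^{-1}w^{\Psi(k)} f\omega,
\]
and similarly $\tilde\nabla_{\parfrac{}{y_k}}(g\omega_i) := (\partial_{y_k}g + z^{-1}w^{\Psi(k)}g)\omega_i$. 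By construction $\tilde\nabla$ satisfies the Leibniz rule and takes the prescribed value $z^{-1}w^{\Psi(k)+\Psi(l)}\omega$ on $w^{\Psi(l)}\omega$. The only nonformal thing to check is that $\tilde\nabla_{\parfrac{}{y_k}}$ descends to the cokernel $\GM(F_\lambda)$, i.e.\ preserves the image of $zd + dF_\lambda \wedge$.

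The main step, and the ``hardest'' part (though really only a short calculation), will be verifying the operator identity $[\tilde\nabla_{\parfrac{}{y_k}},\, zd + dF_\lambda\wedge] = 0$ on the free module. Since $\partial/\partial y_k$ commutes with the $x$-derivation $d$ and $\partial_{y_k}F_\lambda = w^{\Psi(k)}$ is independent of $\by$, the only nontrivial contributions are
\[
\left[\parfrac{}{y_k},\, dF_\lambda \wedge\right] = d(w^{\Psi(k)})\wedge \qquad \text{and} \qquad \left[z^{-1}w^{\Psi(k)},\, z\,d\right] = -d(w^{\Psi(k)})\wedge,
\]
which cancel. This is precisely the assertion that $\tilde\nabla_{\parfrac{}{y_k}}$ preserves the image of $zd + dF_\lambda\wedge$, so it descends to the desired operator $\nabla_{\parfrac{}{y_k}} \colon \GM(F_\lambda) \to z^{-1}\GM(F_\lambda)$.

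The remaining properties then follow by direct computation on the free module. Flatness $[\nabla_{\parfrac{}{y_k}}, \nabla_{\parfrac{}{y_l}}] = 0$ reduces to $\partial_{y_k}\partial_{y_l}F_\lambda = 0$ (as $F_\lambda$ is linear in $\by$); writing out $\nabla_{\parfrac{}{y_k}}\nabla_{\parfrac{}{y_l}}(f\omega)$ gives an expression manifestly symmetric in $k,l$. For the grading commutator, one uses $[\partial/\partial y_k, \cE_\by] = \partial/\partial y_k$, giving $[\partial/\partial y_k, \Gr] = \partial/\partial y_k$, combined with $\Gr(z^{-1}w^{\Psi(k)}) = -z^{-1}w^{\Psi(k)}$ (since $w^{\Psi(k)}$ carries no $\by$- or $\lambda$-weight and $\Gr(z) = z$), giving $[z^{-1}w^{\Psi(k)}, \Gr] = z^{-1}w^{\Psi(k)}$; summing yields $[\nabla_{\parfrac{}{y_k}}, \Gr] = \nabla_{\parfrac{}{y_k}}$.

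Uniqueness is then immediate: $\{w^{\Psi(l)}\omega : l \in \bN \cap |\Sigma|\}$ is a topological $\C[z][\![Q]\!][\![\by]\!]$-basis of $\GM(F_\lambda)$, so the prescribed values on these generators together with the Leibniz rule pin down the connection. Continuity with respect to the $Q$-adic topology on $\C\{\M\}$ (both $\partial/\partial y_k$ and multiplication by $z^{-1}w^{\Psi(k)}$ are continuous) ensures the definition extends unambiguously to infinite $\C[z][\![Q]\!][\![\by]\!][\lambda]$-linear combinations.
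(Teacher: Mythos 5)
Your proposal is correct and follows essentially the same route as the paper: define the connection on the free module $\C[z]\{\M\}[\![\by]\!][\lambda]\,\omega$ via the topological basis $\{w^{\Psi(l)}\omega\}$ and the Leibniz rule, then verify that it descends to the cokernel, with uniqueness immediate from the basis. The only cosmetic difference is that you package the descent check as the commutator identity $[\tilde\nabla_{\parfrac{}{y_k}},\, zd+dF_\lambda\wedge]=0$, whereas the paper computes $z\nabla_{\parfrac{}{y_k}}$ directly on the generators $(zd+dF_\lambda\wedge)w^{\Psi(l)}\omega_i$ of the image using $w^{\Psi(k)}w^{\Psi(l)}=Q^{d(k,l)}w^{\Psi(k+l)}$; both verifications reduce to the same Leibniz-rule cancellation.
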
 
\begin{proof} 
Since $\{w^{\Psi(k)}: k\in \bN\cap |\Sigma|\}$ is a topological 
$\C[z][\![Q]\!][\![\by]\!][\lambda]$-basis of $\C[z]\{\M\}[\![\by]\!][\lambda]$, 
we have a unique connection $\nabla_{\parfrac{}{y_k}}$ on 
$\C[z]\{\M\}[\![\by]\!][\lambda]\omega$ 
satisfying the above properties. 
It suffices to check that $z\nabla_{\parfrac{}{y_k}}$ 
preserves the image of $(z d + dF_\lambda\wedge)$. 
Using Corollary \ref{cor:d(k,l)_toric}, we find that 
\[
z \nabla_{\parfrac{}{y_k}}\left [ (zd + dF_\lambda\wedge) 
w^{\Psi(l)}\omega_i \right] = Q^{d(k,l)} (zd + dF_\lambda \wedge) 
w^{\Psi(k+l)} \omega_i. 
\]
The conclusion follows. 
\end{proof} 

We can introduce shift operators for the Gauss-Manin system. 
\begin{proposition} 
\label{prop:shift_GM} 
Consider the operator $w^{\Psi(k)}\colon \GM(F_\lambda) 
\to\GM(F_\lambda)$ with $k\in \bN\cap |\Sigma|$ 
given by multiplication by $w^{\Psi(k)}$ 
on $\C[z]\{\M\}[\![\by]\!]\omega \cong \GM(F_\lambda)$. 
This satisfies the following properties: 
\begin{align*}
w^{\Psi(k)} \circ w^{\Psi(l)} & = w^{\Psi(l)} \circ w^{\Psi(k)} = 
Q^{d(k,l)} w^{\Psi(k+l)} \\
w^{\Psi(k)} \circ f(Q,\by,\lambda) &= f(Q,\by,\lambda-kz) \circ w^{\Psi(k)} 
\\
w^{\Psi(k)} \circ \nabla_{\parfrac{}{y_k}} 
&= \nabla_{\parfrac{}{y_k}} \circ w^{\Psi(k)}, 
\qquad 
w^{\Psi(k)} \circ \Gr = \Gr \circ w^{\Psi(k)}, 
\end{align*} 
where $k,l\in \bN\cap |\Sigma|$ and 
$f(Q,\by,\lambda) \in \C[\![Q]\!][\![\by]\!][\lambda]$. 
\end{proposition}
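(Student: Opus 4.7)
The approach is to exploit the presentation $\GM(F_\lambda) \cong \C[z]\{\M\}[\![\by]\!]\omega$ and check everything on the topological basis $\{w^{\Psi(l)}\omega : l \in \bN\cap|\Sigma|\}$. Identity (i) is essentially a restatement of Corollary \ref{cor:d(k,l)_toric}: since the $w_i$'s are commuting generators of $\C\{\M\}$, one has $w^{\Psi(k)} w^{\Psi(l)} = w^{\Psi(k)+\Psi(l)} = w^{\Psi(l)} w^{\Psi(k)}$ as literal monomials in $w_1,\dots,w_m$; via $w^{\Psi(n)} = Q^{\beta(n)} x^n$ together with the identity $\Psi(k)+\Psi(l)-\Psi(k+l) = d(k,l)$, this monomial equals $Q^{d(k,l)} w^{\Psi(k+l)}$ in $\C\{\M\}$.

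For identity (ii), the point is that the $\lambda_i$-action on $\GM(F_\lambda)$ is fixed by the relation \eqref{eq:lambda_action}, i.e.
\[
\lambda_i \cdot w^{\Psi(l)}\omega = z l_i\, w^{\Psi(l)}\omega + \sum_{m \in \bN\cap|\Sigma|} m_i\, y_m\, w^{\Psi(m)+\Psi(l)}\omega.
\]
Multiplying by $w^{\Psi(k)}$ on the left and using (i), I would compute
\[
w^{\Psi(k)} \lambda_i\, w^{\Psi(l)}\omega = z l_i\, w^{\Psi(k)+\Psi(l)}\omega + \sum_m m_i\, y_m\, w^{\Psi(k)+\Psi(m)+\Psi(l)}\omega.
\]
Independently, applying $(\lambda_i - k_i z)$ to $w^{\Psi(k)+\Psi(l)}\omega$ via the same defining relation produces the identical right-hand side (the $z(k+l)_i = zl_i + zk_i$ term absorbs the shift). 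Hence $w^{\Psi(k)} \circ \lambda_i = (\lambda_i - k_i z) \circ w^{\Psi(k)}$; induction on the polynomial degree of $f$ extends this to all $f(\lambda) \in \C[\lambda]$, and $\C[\![Q]\!][\![\by]\!]$-linearity (automatic because $w^{\Psi(k)}$ is independent of $Q, \by$) finishes (ii).

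Identity (iii) then follows cleanly. Flatness $[w^{\Psi(k)}, \nabla_{\partial/\partial y_l}] = 0$ (for any $l$) is verified on generators using $\nabla_{\partial/\partial y_l}(w^{\Psi(m)}\omega) = z^{-1} w^{\Psi(l)+\Psi(m)}\omega$ and commutativity of the $w$-monomials; for a coefficient with $\lambda$-dependence, the Leibniz rule combined with (ii) matches both sides because the $y_l$-derivative of $w^{\Psi(k)}$ vanishes. For $[\Gr, w^{\Psi(k)}] = 0$, since $\Gr = \cE_\by + z\partial_z + \sum_i \lambda_i \partial_{\lambda_i}$ is a derivation and $w^{\Psi(k)}$ depends on none of $\by, z, \lambda$, the commutator reduces to showing $\Gr$ commutes with the formal shift $T_k \colon f(\lambda, z) \mapsto f(\lambda - kz, z)$; this is a short direct check, in which the $-k_i z \partial_{\lambda_i}$ produced by conjugating $z \partial_z$ through $T_k$ is precisely cancelled by the shift introduced in $\sum_i \lambda_i \partial_{\lambda_i}$. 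The only real obstacle is the bookkeeping in (ii): once the shift $\lambda \mapsto \lambda - kz$ through $w^{\Psi(k)}$ is established, both halves of (iii) are formal consequences and (i) is built into the setup.
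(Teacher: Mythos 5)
Your proof is correct and follows the same route as the paper: the paper's own proof simply cites Corollary \ref{cor:d(k,l)_toric} for the first identity, the relation \eqref{eq:lambda_action} for the second, and declares the last two obvious. You have merely written out in full the computations that the paper leaves implicit, and all of them check out.
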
 
\begin{proof} 
The first equation follows from Corollary \ref{cor:d(k,l)_toric}.  
The second equation follows from the relation \eqref{eq:lambda_action} 
defining the action of $\lambda_i$. The third and the fourth are obvious. 
\end{proof} 

We now state our main result. 
Recall that $\bt = \{t_k : k \in \bN \cap |\Sigma|\}$ 
is a co-ordinate system on $H_T^*(X_\Sigma)$ given by \eqref{eq:bt} 
and the mirror map gives a formal invertible change of variables 
between $\by$ and $\bt$ (Lemma \ref{lem:mirror_map}). 

\begin{theorem} 
\label{thm:mirror_isom} 
We identify the parameters $\by=\{y_k\}$ 
and $\bt = \{t_k\}$ 
via the mirror map in Proposition \ref{prop:mirror_map}. 
Then we have a unique isomorphism 
\[
\Theta \colon \GM(F_\lambda) \xrightarrow{\cong} 
H^*_T(X_\Sigma)[z][\![Q]\!][\![\bt]\!] 
\] 
of $\C[z][\![Q]\!][\![\by]\!]$-modules such that 
\begin{enumerate}
\item $\Theta(\omega) = \Upsilon(\by,z)$;  
\item $\Theta$ intertwines $w^{\Psi(k)}$ with $\bS_k(\tau(\by))$ for 
$k\in \bN\cap |\Sigma|$;   
\item $\Theta$ intertwines the Gauss-Manin connection with 
the quantum connection;
\item $\Theta$ intertwines the action of equivariant parameters 
$\lambda_i$, $i=1,\dots,D$; 
\item $\Theta$ preserves the grading, i.e.~$\Theta \circ \Gr = 
(\cE_\by + \Gr_0) \circ \Theta$, 
\end{enumerate} 
where $\Upsilon(\by,z)$ is as in Proposition \ref{prop:mirror_map}. 
\end{theorem}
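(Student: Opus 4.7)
\medskip

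\noindent\textbf{Proof plan.}
The natural approach is to \emph{define} $\Theta$ on generators by
\[
\Theta(w^{\Psi(k)}\omega) := \bS_k(\tau(\by))\,\Upsilon(\by,z)
\qquad (k\in \bN\cap|\Sigma|)
\]
and extend $\C[z][\![Q]\!][\![\by]\!]$-linearly using the fact that $\{w^{\Psi(k)}\omega\}_k$ is a topological basis of $\C[z]\{\M\}[\![\by]\!]\omega\cong\GM(F_\lambda)$. Properties (1) and (2) are immediate: $\Psi(0)=0$ and $\bS_0=\id$ give $\Theta(\omega)=\Upsilon$, while the identity $w^{\Psi(k)}w^{\Psi(l)}=Q^{d(k,l)}w^{\Psi(k+l)}$ (Corollary~\ref{cor:d(k,l)_toric}) matches the semigroup law $\bS_k\bS_l=Q^{d(k,l)}\bS_{k+l}$ from Proposition~\ref{prop:shift}(3). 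Uniqueness is automatic, since any map satisfying (1) and (2) must agree with $\Theta$ on the topological basis.

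The first real check is well-definedness, namely that $\Theta$ kills the relation~\eqref{eq:lambda_action}. Applying $\Theta$ to the $\omega$-component of that relation and using Proposition~\ref{prop:shift}(3) to write $\bS_{k+l}=Q^{-d(k,l)}\bS_k\bS_l$, the needed identity reduces, after using $\bS_l(\lambda_i\Upsilon)=(\lambda_i-l_iz)\bS_l\Upsilon$, to
\[
\sum_{k\in\bN\cap|\Sigma|}k_i\,y_k\,\bS_k(\tau(\by))\,\Upsilon(\by,z)=\lambda_i\,\Upsilon(\by,z).
\]
Using the defining flow equation $\bS_k\Upsilon=z\partial_{y_k}\Upsilon+S_k(\tau)\star_\tau\Upsilon$, the left-hand side splits into $z\sum_k k_iy_k\partial_{y_k}\Upsilon$ plus $\big(\sum_k k_iy_kS_k(\tau)\big)\star_\tau\Upsilon$. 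The first sum vanishes by the linear relation for $\Upsilon$ in Remark~\ref{rem:linear_relation} (with $\chi$ the $i$-th coordinate character), and the second equals $\lambda_i\star_\tau\Upsilon=\lambda_i\Upsilon$ by the linear relation for $\tau$ in the same remark. Note that this computation simultaneously establishes property (4).

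For property (3) one verifies that $\Theta$ intertwines $\nabla^{\GM}_{\partial/\partial y_k}$ with the quantum-connection derivative. On the source, $\nabla^{\GM}_{\partial/\partial y_k}(w^{\Psi(l)}\omega)=z^{-1}w^{\Psi(k)+\Psi(l)}\omega$ maps to $z^{-1}\bS_k\bS_l\Upsilon$. On the target, differentiating $\bS_l(\tau(\by))\Upsilon$ in $y_k$ produces three terms. Using the commutation $[\nabla_\alpha,\bS_l]=0$ of Proposition~\ref{prop:shift}(2) to rewrite $\partial_{y_k}\bS_l=z^{-1}[\bS_l,S_k\star_\tau]$ and substituting the defining equation for $\partial_{y_k}\Upsilon$, the $S_k\star$-terms telescope and one is left with $z^{-1}\bS_l\bS_k\Upsilon=z^{-1}\bS_k\bS_l\Upsilon$, matching. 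Property (5) follows directly from the homogeneity statements in Lemma~\ref{lem:homogeneity} and Lemma~\ref{lem:grading}.

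Finally, bijectivity of $\Theta$ is checked by a formal Nakayama argument. Both source and target are complete topological $\C[z][\![Q]\!][\![\by]\!]$-modules with topological bases $\{w^{\Psi(k)}\omega\}$ and $\{\phi_k\}$ respectively (Lemma~\ref{lem:basis}). Passing to the quotient by the maximal ideal $(z,Q,\,y_i{-}1,\,y_k)$, Lemma~\ref{lem:classical_shift} together with $\tau(\ybase)=0$ and $\Upsilon(\ybase,z)=1$ gives $\Theta(w^{\Psi(k)}\omega)\equiv\phi_k$, so the induced map modulo the maximal ideal is the identity on bases and hence an isomorphism. Completeness then upgrades this to an isomorphism of the full modules. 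The main obstacle in this program is the well-definedness step of the second paragraph: one must be careful to extract the correct signs and shifts when commuting $\bS_l$ past equivariant parameters, and to recognize that the two pieces of Remark~\ref{rem:linear_relation} (for $\tau$ and for $\Upsilon$) are precisely what is needed to close the identity.
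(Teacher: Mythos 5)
Your construction of $\Theta$ and your verifications of properties (2)--(5) follow the paper's own proof essentially step for step: $\Theta$ is defined on the topological basis by $w^{\Psi(k)}\omega\mapsto\bS_k(\tau(\by))\Upsilon(\by,z)$; the $\lambda$-compatibility (equivalently, that $\Theta$ respects the relation \eqref{eq:lambda_action}) is reduced to the case $l=0$ via $\bS_l(\lambda_i\,\cdot\,)=(\lambda_i-l_iz)\bS_l(\cdot)$ and then to the two identities of Remark~\ref{rem:linear_relation}; property (3) comes from the flow equation \eqref{eq:flow} together with Proposition~\ref{prop:shift}; and (5) from Lemma~\ref{lem:grading}. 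Whether one labels the $\lambda$-computation ``well-definedness'' or ``property (4)'' is immaterial, since $\GM(F_\lambda)\cong\C[z]\{\M\}[\![\by]\!]\omega$ is free over $\C[z][\![Q]\!][\![\by]\!]$ and \eqref{eq:lambda_action} merely \emph{defines} the $\lambda_i$-action on it; the identity you must check is the same either way, and your check is correct.

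The one step that does not work as written is the bijectivity argument. You reduce modulo the ideal $(z,Q,y_i-1,y_k)$ and appeal to ``completeness,'' but the ground ring $\C[z][\![Q]\!][\![\by]\!]$ is \emph{not} $z$-adically complete: $z$ is an honest polynomial variable, not a topologically nilpotent one, so invertibility modulo an ideal containing $z$ does not imply invertibility (the $1\times 1$ matrix $1+z$ over $\C[z]$ is the standard counterexample). The correct argument --- which is what the paper's terse ``$=\phi_k+O(z)$'' is encoding --- is that at $\by=\ybase$, $Q=0$ one has $\Theta(w^{\Psi(k)}\omega)=\prod_{i=1}^m\prod_{c=0}^{\Psi_i(k)-1}(u_i-cz)=\phi_k+z\cdot(\text{combination of }\phi_l\text{ with }|l|<|k|)$, so the transition matrix to the basis $\{\phi_k\}$ is unipotent with respect to the filtration by $|k|$ (with finitely many $l$ at each level) and hence invertible over $\C[z]$ by a finite recursion. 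Only the subsequent reduction modulo $(Q,\,y_i-1,\,y_k)$ --- with respect to which the modules genuinely are complete --- is handled by the Nakayama-type step you describe. This is a one-line repair, but as stated your argument proves too little.
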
 
\begin{proof} 
Since $\{w^{\Psi(k)} \omega: k \in \bN\cap |\Sigma|\}$ gives 
a topological $\C[z][\![Q]\!][\![\by]\!]$-basis of $\GM(F_\lambda)$, 
we can define a $\C[z][\![Q]\!][\![\by]\!]$-module 
homomorphism $\Theta$ by setting 
$\Theta(w^{\Psi(k)}\omega) = \bS_k(\tau(\by)) \Upsilon(\by,z)$. 
By Lemma \ref{lem:classical_shift} and the form \eqref{eq:tau_Upsilon_expansion} 
of $\tau(\by)$, we find that 
\[
\bS_k(\tau(\by)) \Upsilon(\by,z)\big|_{\by = \ybase, Q=0} = 
\prod_{i=1}^m \prod_{c=0}^{\Psi_i(k)-1} (u_i -cz) = \phi_k + O(z) 
\] 
where $\ybase$ is given in \eqref{eq:y_origin}. Thus $\Theta$ is an isomorphism. 
This preserves the grading since 
we have $\Gr (w^{\Psi(k)}\omega) = 0$ and  
$(\cE_\by + \Gr_0) \bS_k(\tau(\by)) \Upsilon(\by,z) = 0$ 
by Lemma \ref{lem:grading}. 
Proposition \ref{prop:shift} (3) and Proposition \ref{prop:shift_GM} 
show that $\Theta$ intertwines $w^{\Psi(k)}$ with $\bS_k(\tau(\by))$. 
The differential equation \eqref{eq:flow} for $\Upsilon$ gives: 
\[
\parfrac{}{y_k} \Upsilon(\by,z) + z^{-1}
\parfrac{\tau(\by)}{y_k} \star_{\tau(\by)} \Upsilon(\by,z) 
= z^{-1}\bS_k(\tau(\by)) \Upsilon(\by,z) 
\]
where we used $\parfrac{\tau(\by)}{y_k} = S_k(\tau(\by))$. 
This implies that 
\[
\nabla^{\rm QC}_{\parfrac{}{y_k}} \Theta(\omega) = 
\Theta \left(\nabla^{\rm GM}_{\parfrac{}{y_k}} \omega \right) 
\]
where QC stands for quantum connection 
and GM stands for Gauss-Manin. 
Since $\nabla^{\rm QC}$ commutes with $\bS_k(\tau(\by))$, 
$\Theta$ intertwines $\bS_k(\tau(\by))$ with $w^{\Psi(k)}$, 
and $w^{\Psi(k)}$ commutes with $\nabla^{\rm GM}$, it 
follows that we can replace $\omega$ 
with $w^{\Psi(k)} \omega$ in this formula. 
Part (3) follows. 
In view of the relation \eqref{eq:lambda_action}, part (4) 
is equivalent to the relation: 
\[
\lambda_i \bS_l(\tau(\by))\Upsilon(\by,z) 
= z l_i \bS_l(\tau(\by)) \Upsilon(\by,z) + 
\sum_{k\in \bN\cap |\Sigma|} 
k_i y_k \bS_k(\tau(\by))\bS_l(\tau(\by)) \Upsilon(\by,z). 
\]
It suffices to show the equality for $l=0$. Using the differential 
equation \eqref{eq:flow} again, we find that this is equivalent to: 
\[
\lambda_i \Upsilon(\by,z) = \sum_{k\in \bN\cap |\Sigma|} 
z k_i y_k \parfrac{\Upsilon(\by,z)}{y_k} + 
\sum_{k\in \bN \cap |\Sigma|} 
k_i y_k \parfrac{\tau(\by)}{y_k} \star \Upsilon(\by,z). 
\]
This follows from Remark \ref{rem:linear_relation}. 
\end{proof} 

We define the Jacobi ring of $F_\lambda$ to be 
\[
J(F_\lambda) := \C\{\M\}[\![\by]\!][\lambda]/
\left(x_1\partial_{x_1} F_\lambda(x;\by), \dots, 
x_D \partial_{x_D} F_\lambda(x;\by) \right) 
\cong  \C\{\M\}[\![\by]\!]. 
\]
The following corollary gives a combinatorial description of the big 
equivariant quantum cohomology as an abstract ring. 

\begin{corollary} 
\label{cor:equiv_Jacobi_ring} 
We have a $\C[\![Q]\!][\![\by]\!][\lambda]$-algebra isomorphism 
$J(F_\lambda) \cong \C\{\M\}[\![\by]\!] \xrightarrow{\cong}  (H^*_T(X_\Sigma)[\![Q]\!][\![\bt]\!],\star_\tau)$ 
that sends $w^{\Psi(k)} = \partial_{y_k} F(x;\by)$ to 
$\parfrac{\tau(\by)}{y_k}$ for $k\in \bN \cap |\Sigma|$. 
\end{corollary}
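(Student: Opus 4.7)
The plan is to define the isomorphism directly by sending $w^{\Psi(k)} \mapsto S_k(\tau(\by)) = \partial\tau/\partial y_k$ (the second identity by Proposition \ref{prop:mirror_map}), and to verify the algebra property, $\lambda$-linearity, and bijectivity separately.

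First, I would show the assignment extends to a $\C[\![Q]\!][\![\by]\!]$-algebra map $\C\{\M\}[\![\by]\!] \to H^*_T(X_\Sigma)[\![Q]\!][\![\bt]\!]$. The source is topologically spanned by $\{w^{\Psi(k)}\}_{k\in\bN\cap|\Sigma|}$ with multiplication $w^{\Psi(k)} w^{\Psi(l)} = Q^{d(k,l)} w^{\Psi(k+l)}$ (Corollary \ref{cor:d(k,l)_toric}). By Proposition \ref{prop:shift}(3) together with $\lim_{z\to 0}\bS_k(\tau) = S_k(\tau)\star_\tau(\cdot)$, the Seidel elements obey $S_k(\tau)\star_\tau S_l(\tau) = Q^{d(k,l)} S_{k+l}(\tau)$, which matches. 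For compatibility with the $\lambda$-action, recall that the identification $J(F_\lambda) \cong \C\{\M\}[\![\by]\!]$ is obtained by eliminating each $\lambda_i$ via the Jacobi relation $\lambda_i = \sum_k k_i y_k w^{\Psi(k)}$ coming from $x_i\partial_{x_i} F_\lambda = 0$; applying Remark \ref{rem:linear_relation} with $\chi$ the coordinate function $\lambda_i\in H_T^2(\pt)$ gives $\sum_k k_i y_k S_k(\tau(\by)) = \lambda_i$ in the target, confirming that the map is $\lambda$-linear.

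For bijectivity, Lemma \ref{lem:classical_shift} gives that at $\by=\ybase$, $Q=0$ the image of $w^{\Psi(k)}$ equals $S_k(0)|_{Q=0} = \phi_k$, which is a $\C$-basis of $H^*_T(X_\Sigma)$ by Lemma \ref{lem:basis}. Under the mirror-map identification of $\Spf\C[\![Q]\!][\![\by]\!]$ with $\Spf\C[\![Q]\!][\![\bt]\!]$ (Lemma \ref{lem:mirror_map}), both $J(F_\lambda) \cong \C\{\M\}[\![\by]\!]$ and $H^*_T(X_\Sigma)[\![Q]\!][\![\bt]\!]$ are topologically free modules over this common base, and our map sends the topological basis $\{w^{\Psi(k)}\}$ to a family that reduces modulo the augmentation ideal to a $\C$-basis; a standard Nakayama-style completion argument then yields bijectivity.

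I do not expect a substantive obstacle here: the corollary essentially packages the classical limit $z\to 0$ of Theorem \ref{thm:mirror_isom}. A more conceptual alternative route would start from that theorem and reduce modulo $z$: relation \eqref{eq:lambda_action} at $z=0$ is precisely the Jacobi relation, so $\GM(F_\lambda)/z\,\GM(F_\lambda) \cong J(F_\lambda)\cdot\omega$ as $\C[\![Q]\!][\![\by]\!][\lambda]$-modules. The induced module isomorphism sends $\omega$ to $\Upsilon(\by,0)$, which is a unit in $H^*_T(X_\Sigma)[\![Q]\!][\![\bt]\!]$ by the expansion \eqref{eq:tau_Upsilon_expansion} with leading term $1$, and conjugating by this unit converts the module isomorphism into the desired ring isomorphism sending $w^{\Psi(k)} \mapsto S_k(\tau(\by))$.
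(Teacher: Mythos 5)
Your proposal is correct, but your primary route is genuinely different from the paper's. The paper proves the corollary in two lines by reducing Theorem \ref{thm:mirror_isom} modulo $z$: since $\GM(F_\lambda)/z\GM(F_\lambda)\cong J(F_\lambda)\cdot\omega$, the isomorphism $\Theta$ induces a module isomorphism onto $H^*_T(X_\Sigma)[\![Q]\!][\![\bt]\!]$ intertwining multiplication by $w^{\Psi(k)}$ with $\parfrac{\tau(\by)}{y_k}\star_{\tau(\by)}$, and the ring isomorphism follows --- this is exactly the ``alternative route'' in your last paragraph (your normalization by the unit $\Upsilon(\by,0)$ makes explicit a step the paper leaves implicit). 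Your main route instead builds the map directly from the Seidel representation: multiplicativity from $S_k(\tau)\star_\tau S_l(\tau)=Q^{d(k,l)}S_{k+l}(\tau)$ matching $w^{\Psi(k)}w^{\Psi(l)}=Q^{d(k,l)}w^{\Psi(k+l)}$, $\lambda$-linearity from the identification of the Jacobi relation $\lambda_i=\sum_k k_i y_k w^{\Psi(k)}$ with Remark \ref{rem:linear_relation}, and bijectivity from $S_k(0)|_{Q=0}=\phi_k$ plus a completion argument. All of these ingredients are available in the paper and the checks go through, so the direct argument is valid; what it buys is logical independence from the $\cD$-module isomorphism of Theorem \ref{thm:mirror_isom} (only Proposition \ref{prop:mirror_map}, Proposition \ref{prop:shift}(3), Corollary \ref{cor:d(k,l)_toric}, Remark \ref{rem:linear_relation} and Lemma \ref{lem:classical_shift} are used) and it makes the compatibility with the $\lambda$-action transparent, at the cost of being longer than the paper's derivation, which gets the ring structure for free once the full mirror isomorphism is in place.
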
 
\begin{proof} 
Note that we have $\GM(F_\lambda)/z \GM(F_\lambda) \cong 
J(F_\lambda) \cdot \omega$. Since $\Theta$ intertwines the Gauss-Manin 
connection with the quantum connection, it induces  
an isomorphism $J(F_\lambda) \cdot \omega 
\cong H^*_T(X_\Sigma)[\![Q]\!][\![\bt]\!]$ 
intertwining the action of $w^{\Psi(k)}$ with the 
quantum product $\parfrac{\tau(\by)}{y_k}\star_{\tau(\by)}$. 
\end{proof} 

\subsection{Primitive form} 
\label{subsec:primitive_form} 
We define the primitive form $\zeta\in \GM(F_\lambda)$ as 
the inverse image of the identity class $1$ 
\[
\zeta := \Theta^{-1}(1) 
= \sum_{k\in \bN\cap |\Sigma|}c_k(z,\by) w^{\Psi(k)} \omega
\]
under the isomorphism $\Theta$ in Theorem \ref{thm:mirror_isom}, 
where $c_k(z,\by) \in \C[z][\![Q]\!][\![\by]\!]$. 
Since $\Theta$ intertwines the Gauss-Manin connection 
with the quantum connection, we obtain: 

\begin{proposition}
When we identify the parameters $\by$ and $\bt$ via the mirror map, 
the primitive form satisfies the differential equation: 
\[
z \nabla_{\parfrac{}{t_k}} \nabla_{\parfrac{}{t_l}} 
\zeta = \sum_{j \in \bN\cap |\Sigma|} c_{k,l}^j(\bt) \nabla_{\parfrac{}{t_j}} \zeta 
\]
where $c_{k,l}^j(\bt)\in \C[\![Q]\!][\![\bt]\!]$ are the structure constants 
of the equivariant quantum product, i.e.~$\phi_k \star_\tau \phi_l 
= \sum_{j} c_{k,l}^j(\bt) \phi_j$. 
\end{proposition}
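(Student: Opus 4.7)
The plan is to push the identity through the mirror isomorphism $\Theta$ of Theorem \ref{thm:mirror_isom} and reduce to a trivial computation on the A-side. Because $\Theta(\zeta)=1$ and $\Theta$ is $\C[z][\![Q]\!][\![\by]\!]$-linear and intertwines $\nabla^{\rm GM}$ with $\nabla^{\rm QC}$, the proposed relation in $\GM(F_\lambda)$ is equivalent to
\[
z\,\nabla^{\rm QC}_{\partial/\partial t_k}\nabla^{\rm QC}_{\partial/\partial t_l}\,1
=\sum_{j\in\bN\cap|\Sigma|} c_{k,l}^{\,j}(\bt)\,\nabla^{\rm QC}_{\partial/\partial t_j}\,1
\]
in $H^*_T(X_\Sigma)[\![Q]\!][\![\bt]\!]$, after identifying $\by$ with $\bt$ via the mirror map.

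Next I would verify this identity by direct computation using the definition $\nabla^{\rm QC}_\alpha=\partial_\alpha+z^{-1}(\alpha\star_\tau)$ and the fact that, in the basis $\{\phi_k\}$ with dual coordinates $\{t_k\}$, one has $\partial/\partial t_k\mapsto \phi_k$. First, $\nabla^{\rm QC}_{\partial/\partial t_l}\,1=z^{-1}\phi_l$ since $\partial_{t_l}1=0$ and $\phi_l\star_\tau 1=\phi_l$. Applying $\nabla^{\rm QC}_{\partial/\partial t_k}$ and using that the basis element $\phi_l$ is $\bt$-independent yields
\[
z\,\nabla^{\rm QC}_{\partial/\partial t_k}\nabla^{\rm QC}_{\partial/\partial t_l}\,1
= \phi_k\star_\tau \phi_l = \sum_j c_{k,l}^{\,j}(\bt)\,\phi_j.
\]
Since the right-hand side of the displayed identity is also $\sum_j c_{k,l}^{\,j}(\bt)\,z^{-1}\phi_j$, the A-side identity holds.

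Finally, applying $\Theta^{-1}$ term by term and using once more that $\Theta^{-1}$ intertwines the two connections and is $\C[z][\![Q]\!][\![\by]\!]$-linear (so it commutes with multiplication by the scalar functions $c_{k,l}^{\,j}(\bt)$) transports the identity back to $\GM(F_\lambda)$ applied to $\zeta$, which is the claim. There is no serious obstacle here: the whole statement is a formal consequence of Theorem \ref{thm:mirror_isom}(3) together with the defining property $\Theta(\zeta)=1$, so the proof is essentially a one-line unwinding of the definitions once the mirror isomorphism is in place.
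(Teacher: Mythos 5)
Your proof is correct and takes essentially the same route as the paper: the paper deduces this proposition in a single line from the facts that $\Theta$ intertwines the Gauss-Manin connection with the quantum connection and that $\Theta(\zeta)=1$. Your explicit A-side verification $z\nabla_{\parfrac{}{t_k}}\nabla_{\parfrac{}{t_l}}1=\phi_k\star_\tau\phi_l=\sum_j c_{k,l}^j(\bt)\,\phi_j = z\sum_j c_{k,l}^j(\bt)\nabla_{\parfrac{}{t_j}}1$ is exactly the computation the paper leaves implicit.
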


We give an alternative description for the mirror isomorphism $\Theta$ 
and the primitive form. Introduce an infinite set 
$\by^+ = \{ y_{k,n} : k \in \bN \cap |\Sigma| , n =1,2,3,\dots\}$ 
of parameters and consider the formal deformation 
of $\omega$: 
\begin{equation} 
\label{eq:omega_y+} 
\omega(\by^+) = \exp\left( 
\sum_{k\in \bN\cap |\Sigma|} 
\sum_{n=1}^\infty y_{k,n} z^{n-1}  w^{\Psi(k)}\right) \omega. 
\end{equation} 
In view of the oscillatory integral in Remark \ref{rem:oscillatory}, 
this formal deformation corresponds to adding to the potential $F_\lambda$ 
the $z$-dependent term $\sum_{k} \sum_{n\ge 1} y_{k,n} z^n w^{\Psi(k)}$. 
The original parameters $\by= \{y_k\}$ correspond to $\{y_{k,0}\}$ in this sense. 
Note that the primitive form $\zeta$ can be written in this form \eqref{eq:omega_y+} 
since $c_k(z,\by)|_{\by=\ybase, Q=0} = \delta_{k,0}$. 
We will work with formal power series in all these variables 
$\{\log y_1,\dots,\log y_m\} \cup \{y_k : k\in G\} \cup 
\{y_{k,n}: k\in \bN\cap |\Sigma|, n\ge 1\}$. 
The following theorem follows easily from Proposition \ref{prop:mirror_map} 
and Theorem \ref{thm:mirror_isom}. 

\begin{theorem} 
\label{thm:y+} 
The image $\Upsilon(\by,\by^+,z) = \Theta(\omega(\by^+))$ is 
characterized by the following differential equation: 
\begin{align} 
\label{eq:Upsilon_y+flow}
\parfrac{\Upsilon(\by,\by^+,z)}{y_{k,n}} 
& = [z^{n-1}\bS_k(\tau(\by))]_+ \Upsilon(\by,\by^+,z)  \quad n=0,1,2,\dots 
\end{align} 
where we set $y_{k,0} := y_k$, $[z^{n-1} \bS_k(\tau(\by))]_+ \Upsilon 
:= z^{n-1} \bS_k(\tau(\by))\Upsilon - \delta_{n,0} z^{-1} 
S_k(\tau(\by))\star_{\tau(\by)}\Upsilon$, together 
with the expansion 
\begin{equation} 
\label{eq:expansion_Upsilon_y+}
\Upsilon(\by,\by^+,z) = 1 + \sum_{(\ell,\ell^+)} 
\Upsilon_{\ell,\ell^+}(z) 
\by^\ell  (\by^+)^{\ell^+} Q^{d(\ell,\ell^+)}
\end{equation} 
with $\Upsilon_{\ell,\ell^+}(z) \in H^*_\hT(X_\Sigma)$, 
where the sum is taken over $(\ell,\ell^+)\in 
\Z^{\oplus (\bN\cap |\Sigma|)}\times \Z^{\oplus ( (\bN\cap |\Sigma|)\times \N)}$ 
such that 
\begin{itemize} 
\item $\sum_{k\in\bN \cap |\Sigma|} (\ell_k + \sum_{n=1}^\infty 
\ell_{k,n}^+) k =0$; 
\item $\ell_k\ge 0$ for all $k\in G$, 
$\ell_{k,n}^+ \ge 0$ for all $(k,n) \in (\bN \cap |\Sigma|) \times \N$, and; 
\item  
$d(\ell,\ell^+)\in \Eff(X_\Sigma)$ 
\end{itemize} 
where $\N = \{1,2,3,\dots\}$ is the set of natural numbers and $d(\ell,\ell^+)
\in H_2(X_\Sigma,\Z)$ is 
determined by $u_i \cdot d(\ell,\ell^+) = \sum_{k\in \bN \cap |\Sigma|}
(\ell_k + \sum_{n=1}^\infty \ell^+_{k,n}) \Psi_i(k)$. 
\end{theorem}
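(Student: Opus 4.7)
The strategy is to derive the differential equations by directly differentiating $\omega(\by^+)$ inside $\GM(F_\lambda)$, applying the isomorphism $\Theta$ from Theorem \ref{thm:mirror_isom}, and then invoking the formal flow theorem to obtain the characterization.

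First, I extend $\Theta$ by $\C[z][\![Q]\!][\![\by^+]\!]$-linearity and set $\Upsilon(\by,\by^+,z) := \Theta(\omega(\by^+))$. For $n \ge 1$ the exponential form \eqref{eq:omega_y+} gives
\[
\parfrac{\omega(\by^+)}{y_{k,n}} = z^{n-1} w^{\Psi(k)} \omega(\by^+).
\]
Since $\Theta$ intertwines multiplication by $w^{\Psi(k)}$ with $\bS_k(\tau(\by))$ by Theorem \ref{thm:mirror_isom}(2), applying $\Theta$ yields $\parfrac{\Upsilon}{y_{k,n}} = z^{n-1} \bS_k(\tau(\by)) \Upsilon$, which agrees with the stated equation since $\delta_{n,0}=0$. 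For $n=0$, the variable $y_k$ does not appear explicitly in $\omega(\by^+)$, so the Gauss-Manin connection gives
\[
\nabla^{\rm GM}_{\parfrac{}{y_k}}\omega(\by^+) = z^{-1} \parfrac{F_\lambda}{y_k}\, \omega(\by^+) = z^{-1} w^{\Psi(k)}\omega(\by^+).
\]
Applying $\Theta$ and using that it intertwines $\nabla^{\rm GM}$ with $\nabla^{\rm QC}$ (Theorem \ref{thm:mirror_isom}(3)) together with $\parfrac{\tau(\by)}{y_k} = S_k(\tau(\by))$ from Proposition \ref{prop:mirror_map}, I obtain
\[
\parfrac{\Upsilon}{y_k} + z^{-1} S_k(\tau(\by)) \star_{\tau(\by)} \Upsilon = z^{-1} \bS_k(\tau(\by)) \Upsilon,
\]
which rearranges to the $n=0$ form $\parfrac{\Upsilon}{y_k} = [z^{-1}\bS_k(\tau(\by))]_+ \Upsilon$.

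Next I verify the expansion \eqref{eq:expansion_Upsilon_y+}. At $\by^+=0$ the function $\Upsilon(\by,\by^+,z)$ reduces to $\Theta(\omega) = \Upsilon(\by,z)$ by Theorem \ref{thm:mirror_isom}(1), which has the form \eqref{eq:tau_Upsilon_expansion}. Each non-constant term arises by expanding $\omega(\by^+)$ as a formal series in $\by^+$, applying $\Theta$ to obtain compositions of the form $\bS_{k_1}\cdots \bS_{k_r}(\tau(\by))\Upsilon(\by,z)$, and then expanding in $\by$ using \eqref{eq:tau_Upsilon_expansion}. The Novikov class $d(\ell,\ell^+)$ is produced from the $Q^{d(k,l)}$-factors of Corollary \ref{cor:d(k,l)_toric}; the constraint $\sum_k (\ell_k + \sum_n \ell_{k,n}^+) k = 0$ is exactly the condition for $d(\ell,\ell^+)$, defined by $u_i \cdot d = \sum_k (\ell_k + \sum_n \ell_{k,n}^+)\Psi_i(k)$, to lie in $\LL \cong H_2(X_\Sigma,\Z)$; the effectiveness $d(\ell,\ell^+)\in \Eff(X_\Sigma)$ again follows from Corollary \ref{cor:d(k,l)_toric}.

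Finally, uniqueness of a formal power series of the form \eqref{eq:expansion_Upsilon_y+} satisfying \eqref{eq:Upsilon_y+flow} follows from the formal flow theorem once the pairwise commutators of the right-hand sides vanish. For two indices $n,m \ge 1$ this is immediate from $[\bS_k(\tau),\bS_l(\tau)]=0$ (Proposition \ref{prop:shift}(3)) together with the fact that $\bS_k(\tau(\by))$ is independent of $y_{l,m}$ for $m\ge 1$. For two $n=m=0$ equations it is the commutativity of the vector fields $\bV_k$ recorded in Definition \ref{def:commuting} and Remark \ref{rem:Givental_cone}. The mixed case $n\ge 1$, $m=0$ reduces to the identity
\[
\parfrac{\bS_k(\tau(\by))}{y_l} = -z^{-1}\bigl[S_l(\tau(\by))\star_{\tau(\by)},\,\bS_k(\tau(\by))\bigr],
\]
which follows from $[\nabla^{\rm QC}_\alpha,\bS_k(\tau)]=0$ (Proposition \ref{prop:shift}(2)) specialized to $\alpha = \parfrac{\tau}{y_l} = S_l(\tau(\by))$. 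The main subtlety, rather than a genuine obstacle, is the bookkeeping in the $n=0$ case: the $y_k$-dependence hidden inside $\tau(\by)$ generates a $\star$-correction via the quantum connection intertwining, and it is precisely this correction that turns $z^{-1}\bS_k$ into $[z^{-1}\bS_k]_+$ on the right-hand side.
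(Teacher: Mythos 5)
Your proposal is correct and follows exactly the route the paper intends (the paper states only that the theorem ``follows easily from Proposition \ref{prop:mirror_map} and Theorem \ref{thm:mirror_isom}''): differentiate $\omega(\by^+)$ in $\GM(F_\lambda)$, push through $\Theta$ using its intertwining of $w^{\Psi(k)}$ with $\bS_k(\tau(\by))$ and of the two connections, and recover the expansion and uniqueness by the same $Q$--$\by$ bookkeeping and recursion as in Proposition \ref{prop:mirror_map}. Your commutator checks are a harmless redundancy, since existence is already supplied by $\Theta(\omega(\by^+))$ and uniqueness follows from the recursive determination of coefficients from the base-point value $1$.
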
 

We consider the formal neighbourhood of $(0,1)$ in $H^*_T(X_\Sigma)[\![Q]\!] 
\times H^*_\hT(X_\Sigma)[\![Q]\!]$. This is defined similarly 
to the formal neighbourhood of $0$ in $H^*_T(X_\Sigma)[\![Q]\!]$ 
(see the discussion around \eqref{eq:bt}) 
by choosing a $\C$-linear basis of $H^*_T(X_\Sigma) \times 
H_\hT^*(X_\Sigma)$. 

\begin{lemma} 
\label{lem:very_big_mirrormap}
The map $(\by,\omega(\by^+)) \mapsto (\tau(\by), \Upsilon(\by,\by^+,z))$ 
defines an isomorphism between the formal neighbourhood 
$\Spf (\C[\![Q]\!][\![\by,\by^+]\!])$ 
of $\omega$ in the Gauss-Manin system and the formal neighbourhood of 
$(0,1)$ in $H^*_T(X_\Sigma)[\![Q]\!] \times H^*_\hT(X_\Sigma)[\![Q]\!]$. 
\end{lemma}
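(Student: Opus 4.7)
The plan is to invoke the formal inverse function theorem, exactly as in the proof of Lemma \ref{lem:mirror_map}. Since $\tau(\by)$ depends only on $\by$ (Proposition \ref{prop:mirror_map}), the Jacobian of the map $(\by,\by^+)\mapsto(\tau(\by),\Upsilon(\by,\by^+,z))$ at the basepoint $(\by,\by^+)=(\ybase,0)$ is block lower-triangular, so it suffices to show that the two diagonal blocks
\[
\frac{\partial\tau}{\partial\by}\bigg|_{\by=\ybase,\,Q=0}\qquad\text{and}\qquad\frac{\partial\Upsilon}{\partial\by^+}\bigg|_{(\ybase,0),\,Q=0}
\]
are invertible. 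The first block is invertible by Lemma \ref{lem:mirror_map} via the identity $\partial_{y_k}\tau|_{\by=\ybase,\,Q=0}=\phi_k$ from \eqref{eq:tau_Jacobi}.

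For the second block I would specialize the flow equation \eqref{eq:Upsilon_y+flow}. Since $\tau(\ybase)=0$ and $\Upsilon|_{(\ybase,0)}=1$, Lemma \ref{lem:classical_shift} gives, for $n\ge 1$,
\[
\frac{\partial\Upsilon}{\partial y_{k,n}}\bigg|_{(\ybase,0),\,Q=0}
=z^{n-1}\,\bS_k(0)\cdot 1\big|_{Q=0}
=z^{n-1}\prod_{i=1}^{m}\prod_{c=0}^{\Psi_i(k)-1}(u_i-cz).
\]
Expanding $\prod_{i,c}(u_i-cz)=\phi_k+z\,f_1(u)+z^2 f_2(u)+\cdots$, where $f_j$ has $u$-degree $|k|-j$, one reads off that in the $\C$-basis $\{z^m\phi_l:m\ge 0,\,l\in\bN\cap|\Sigma|\}$ of $H^*_{\hT}(X_\Sigma)$, the coefficient of $z^m\phi_l$ in $\partial_{y_{k,n}}\Upsilon|_{\text{basepoint}}$ vanishes for $n>m+1$ and equals $\delta_{k,l}$ when $n=m+1$. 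Thus, under the bijection $(k,n)\leftrightarrow(k,n-1)$ between index sets, the second block is lower-triangular with unit diagonal, and can be inverted term by term.

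Before invoking the formal inverse function theorem I would also check that the assignment $(\by,\by^+)\mapsto(\tau(\by),\Upsilon(\by,\by^+,z))$ is a well-defined morphism of formal schemes, that is, that the coefficients of the basis vectors $\phi_k$ of $H^*_T(X_\Sigma)$ in $\tau(\by)$ and of $z^m\phi_l$ in $\Upsilon(\by,\by^+,z)-1$ tend to zero in the $\C[\![Q]\!]$-adic topology as the total degree in the variables grows. For $\tau$ this was verified in Lemma \ref{lem:mirror_map}; for $\Upsilon$ the analogous statement follows from the expansion \eqref{eq:expansion_Upsilon_y+}, using that the constraint $\sum_{k}(\ell_k+\sum_n\ell^+_{k,n})k=0$, non-negativity of $\ell_k$ ($k\in G$) and $\ell^+_{k,n}$, and $d(\ell,\ell^+)\in\Eff(X_\Sigma)$ provide enough control on the $(\by,\by^+,Q)$-support. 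I expect the main technical obstacle to be precisely this topological bookkeeping over the Novikov ring with the infinite index sets $\by$ and $\by^+$; once it is dealt with, the triangular inversion and the application of the formal inverse function theorem are routine.
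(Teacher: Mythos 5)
Your proposal is correct and follows essentially the same route as the paper: reduce to the formal inverse function theorem, note the block-triangular structure coming from the fact that $\tau$ depends only on $\by$, reuse Lemma \ref{lem:mirror_map} for the first block, and compute $\partial_{y_{k,n}}\Upsilon$ at the base point via Lemma \ref{lem:classical_shift} to get $z^{n-1}\prod_{i=1}^m\prod_{c=0}^{\Psi_i(k)-1}(u_i-cz)$. The only difference is that you spell out the triangularity argument showing these classes form a $\C$-basis of $H^*_\hT(X_\Sigma)$, which the paper simply asserts.
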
 
\begin{proof} 
The map $(\by,\omega(\by^+)) \mapsto (\tau(\by),\Upsilon(\by,\by^+,z))$ 
defines a morphism of formal schemes for a reason similar to 
the proof of Lemma \ref{lem:mirror_map}. 
By the formal inverse function theorem (Theorem \ref{thm:formal_inverse_function}), 
it suffices to check that the differential of the map 
at $\by = \ybase$, $\by^+=0$, $Q=0$ is an isomorphism 
(see \eqref{eq:y_origin} for $\ybase$). 
We checked that $\by \mapsto \tau(\by)$ 
is an isomorphism in Lemma \ref{lem:mirror_map}. 
By Lemma \ref{lem:classical_shift}, we have for $n\ge 1$ 
\begin{equation}
\label{eq:Upsilon_Jacobi}
\left. \parfrac{\Upsilon(\by,\by^+,z)}{y_{k,n}}
\right|_{\by=\ybase,\by^+ =0, Q=0} = 
z^{n-1} \prod_{i=1}^m 
\prod_{c=0}^{\Psi_i(k)-1}(u_i- cz). 
\end{equation}
These form a $\C$-linear basis of $H^*_{\hT}(X_\Sigma)$, and 
the conclusion follows. 
\end{proof}

\begin{corollary}
\label{cor:primitive_Upsilon}
The primitive form $\zeta$ is given by $\omega(\by^+)$ 
with $\Upsilon(\by,\by^+,z) =1$. 
\end{corollary}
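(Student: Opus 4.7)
The plan is essentially a one-line consequence of Lemma \ref{lem:very_big_mirrormap} and Theorem \ref{thm:y+}: I would solve the equation $\Upsilon(\by,\by^+,z)=1$ for $\by^+$ as a formal function of $\by$, and then observe that the resulting section $\omega(\by^+(\by))$ of the Gauss-Manin system must equal $\Theta^{-1}(1)=\zeta$.

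In more detail, Lemma \ref{lem:very_big_mirrormap} asserts that the map
\[
\Phi\colon (\by,\by^+) \longmapsto \bigl(\tau(\by),\, \Upsilon(\by,\by^+,z)\bigr)
\]
is an isomorphism of formal neighbourhoods, sending the base point $\by=\ybase$, $\by^+=0$, $Q=0$ to $(\tau,\Upsilon)=(0,1)$. Composing $\Phi^{-1}$ with the slice $\tau\mapsto(\tau,1)$ and pre-composing with the mirror map $\by\mapsto\tau(\by)$ (itself an isomorphism by Lemma \ref{lem:mirror_map}) produces a unique formal power series $\by^+(\by)$, vanishing at $\by=\ybase$, $Q=0$, such that $\Upsilon(\by,\by^+(\by),z)\equiv 1$. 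By Theorem \ref{thm:y+}, $\Upsilon(\by,\by^+,z) = \Theta(\omega(\by^+))$, so $\Theta(\omega(\by^+(\by)))=1$; applying $\Theta^{-1}$ gives $\omega(\by^+(\by)) = \zeta$.

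There is no real obstacle beyond what has already been handled: everything reduces to the formal inverse function theorem invoked in the proof of Lemma \ref{lem:very_big_mirrormap}, whose key input was the explicit computation \eqref{eq:Upsilon_Jacobi} showing that the partial derivatives $\partial\Upsilon/\partial y_{k,n}$ at the base point span $H^*_\hT(X_\Sigma)$ over $\C$. If one prefers a direct construction rather than appealing to Lemma \ref{lem:very_big_mirrormap}, the coefficients of $\by^+(\by)$ can instead be determined inductively in the $\by$- and $Q$-adic filtrations from the differential equations \eqref{eq:Upsilon_y+flow}; the recursion is solvable precisely because the leading-order derivatives \eqref{eq:Upsilon_Jacobi} form a basis.
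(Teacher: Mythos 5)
Your proposal is correct and is precisely the argument the paper intends: the corollary is stated without proof as an immediate consequence of Theorem \ref{thm:y+} (which gives $\Upsilon(\by,\by^+,z)=\Theta(\omega(\by^+))$) and Lemma \ref{lem:very_big_mirrormap} (which guarantees a unique formal solution $\by^+(\by)$ to $\Upsilon=1$), so that $\omega(\by^+(\by))=\Theta^{-1}(1)=\zeta$ by the definition of the primitive form in \S\ref{subsec:primitive_form}. Your alternative inductive construction via \eqref{eq:Upsilon_y+flow} and \eqref{eq:Upsilon_Jacobi} is also sound but adds nothing beyond what the formal inverse function theorem already delivers.
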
 

\begin{remark}[cf.~Remark \ref{rem:inverse_mirror}]
Extending the argument in Lemma \ref{lem:grading}, we can 
easily show the homogeneity  
$(\cE_{\by,\by^+} + \Gr_0) \Upsilon(\by,\by^+,z) = 0$ with respect 
to the extended Euler vector field 
$\cE_{\by,\by^+} = \sum_{k\in \bN\cap |\Sigma|} 
\sum_{n=0}^\infty 
(1-n) y_{k,n} \parfrac{}{y_{k,n}}$, where we set $y_{k,0} = y_k$. 
From this we obtain 
\[
\sum_{k,n} (1-n) y_{k,n} \left[ z^{n-1} \bS_k(\tau(\by))\right]_+   
\Upsilon(\by,\by^+,z) + \Gr_0 \Upsilon(\by,\by^+,z) = 0.  
\]
Suppose that $\by^+$ is chosen so that 
$\Upsilon(\by,\by^+,z) = 1$ and $\omega(\by^+)$ is the primitive form. 
Then one obtains by \eqref{eq:inverse_mirror_map} 
that 
\[
\sum_{k\in \bN\cap |\Sigma|} \sum_{n=0}^\infty 
(1-n) y_{k,n} z^n \bS_k(\tau(\by)) 1 
= c_1^T(X_\Sigma) + \sum_{k\in \bN\cap |\Sigma|} 
(1 - |k|) t_k \phi_k. 
\]
This determines $y_{k,n}$ with $n\neq 1$ as the expansion 
coefficients of the right-hand side, as $z^n \bS_k(\tau) 1$ form 
a basis of $H_\hT^*(X_\Sigma)$. 
\end{remark} 

\begin{remark} 
In this paper, we do not study the higher residue 
pairing \cite{SaitoK:higherresidue} for the Gauss-Manin system. 
It would be interesting to define such a structure in our setting. 
Since $X_\Sigma$ is not necessarily compact, 
the higher residue pairing should take values in 
$\C(\lambda_1,\dots,\lambda_D)[z][\![Q]\!][\![\by]\!]$ 
in general and coincide with the Poincar\'{e} pairing on quantum 
cohomology. 
See e.g.~\cite[Appendix A.3]{Iritani:integral} for the comparison 
of (non-equivariant) pairings for compact weak-Fano toric orbifolds. 
\end{remark} 


\section{Non-equivariant mirrors} 
\subsection{Non-equivariant mirror isomorphism} 
\label{subsec:noneq_mirror_isom}
We obtain a non-equivariant mirror isomorphism by taking 
the non-equivariant limit $\lambda \to 0$ of Theorem \ref{thm:mirror_isom}. 
The non-equivariant Gauss-Manin system $\GM(F)$ is defined 
to be the cokernel of the map: 
\[
zd + dF \wedge \colon 
\bigoplus_{i=1}^D \C[z]\{\M\}[\![\by]\!] \omega_i 
\to \C[z]\{\M\}[\![\by]\!] \omega 
\]
which is just $\GM(F_\lambda)/\sum_{i=1}^D \lambda_i \GM(F_\lambda)$. 
The mirror isomorphism $\Theta$ in Theorem \ref{thm:mirror_isom} induces 
an isomorphism 
\begin{equation} 
\label{eq:Theta_noneq} 
\Theta_\noneq 
\colon \GM(F) \xrightarrow{\cong} H^*(X_\Sigma)[z][\![Q]\!][\![\bt]\!]. 
\end{equation} 
In the non-equivariant limit, we can extend the flat connection 
in the $z$-direction using the homogeneity. 
For the non-equivariant Gauss-Manin system $\GM(F)$,  
we define: 
\[
\nabla_{z\parfrac{}{z}} := \Gr - \nabla_{\cE_\by}.  
\]
Explicitly this is given by 
\[
\nabla_{z\parfrac{}{z}} (f(x,z,\by)\omega) 
=  \left( z\parfrac{f(x,z,\by)}{z} -z^{-1} F(x;\by) f(x,z,\by)\right) \omega 
\]
for $f(x,z,\by) \in \C[z]\{\M\}[\![\by]\!]$. 
For the non-equivariant quantum cohomology module 
$H^*(X_\Sigma)[z][\![Q]\!][\![\bt]\!]$, 
we define: 
\[
\nabla_{z\parfrac{}{z}} := (\Gr_0 + \cE_\bt) - \nabla_{\cE_\bt}    
\]
which is given explicitly as:  
\[
\nabla_{z\parfrac{}{z}} f(z,\bt) = \Gr_0(f(z,\bt)) - \frac{1}{z} 
\left(c_1(X_\Sigma) + \sigma - \Gr_0(\sigma) \right) \star_\sigma f(z,\bt) 
\]
for $f(z,\bt) \in H^*(X_\Sigma)[z][\![Q]\!][\![\bt]\!]$,
where $\sigma$ is the non-equivariant limit of 
$\tau = \sum_{k\in \bN\cap |\Sigma|} t_k \phi_k$ 
and $\star_\sigma$ denotes the non-equivariant quantum product. 
Note that $\Gr_0$ contains the derivation $z\parfrac{}{z}$. 
These operators $\nabla_{z\parfrac{}{z}}$ have a pole of 
order one along $z=0$. By Theorem \ref{thm:mirror_isom}, 
it is clear that the isomorphism $\Theta_\noneq$ intertwines 
the quantum connection and the Gauss-Manin connection including 
in the $z$-direction.  

We further restrict the base space to the non-equivariant 
cohomology $H^*(X_\Sigma)$. 
Let $T_0,\dots,T_N$ denote a homogeneous basis of $H^*(X_\Sigma)$ 
and let $s_0,\dots,s_N$ be the co-ordinates on $H^*(X_\Sigma)$ 
dual to $T_0,\dots,T_N$. 
We denote by $\sigma = \sum_{i=0}^N s_i T_i$ a general point 
on $H^*(X_\Sigma)$. 
We choose a formal section 
$\frs \colon H^*(X_\Sigma)[\![Q]\!] \to H^*_T(X_\Sigma)[\![Q]\!]$  
of the natural map 
$H^*_T(X_\Sigma)[\![Q]\!] \to H^*(X_\Sigma)[\![Q]\!]$ of the form: 
\[
\frs(\sigma) = \sum_{k\in \bN \cap |\Sigma|} 
\frs_k(\sigma) \phi_k \qquad \text{with} \quad 
\frs_k(\sigma) \in \C[\![Q]\!]
[\![s_0,\dots,s_N]\!]
\] 
such that $\frs_k(0)|_{Q=0} = 0$, $\lim_{|k|\to \infty} \frs_k(\sigma) =0$ 
(in the adic topology of $\C[\![Q]\!][\![s_0,\dots,s_N]\!]$) 
and that the non-equivariant limit of 
$\frs(\sigma)$ equals $\sigma$. 
The section $\frs$ is a morphism 
$\Spf \C[\![Q]\!][\![s_0,\dots,s_N]\!] \to \Spf \C[\![Q]\!][\![\bt]\!]$ 
of formal schemes. 
By pulling back $F$ by $\frs$, we obtain a Landau-Ginzburg potential 
\begin{align*} 
(\frs^* F)(x;\sigma) & = \sum_{k\in \bN \cap |\Sigma|} y_k(\frs(\sigma)) w^{\Psi(k)}
\end{align*} 
parameterized by $\sigma \in H^*(X_\Sigma)$, 
where $y_k = y_k(\tau)$ denotes the inverse mirror map. 
Define the Gauss-Manin system $\GM(\frs^*F)$ of $\frs^* F$ 
to be the cokernel of the map 
\[
zd + d(\frs^*F)\wedge \colon 
\bigoplus_{i=1}^D \C[z]\{\M\}[\![s_0,\dots,s_N]\!] \omega_i  
\to \C[z]\{\M\}[\![s_0,\dots,s_N]\!] \omega. 
\]
\begin{lemma} 
The map $\Theta_\noneq$ in \eqref{eq:Theta_noneq} 
induces an isomorphism: 
\[
\frs^* \Theta_\noneq \colon \GM(\frs^*F) \xrightarrow{\cong} 
H^*(X_\Sigma) [z][\![Q]\!][\![s_0,\dots,s_N]\!]. 
\]
\end{lemma}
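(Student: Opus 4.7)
The plan is to realize $\frs^*\Theta_\noneq$ as the base change of $\Theta_\noneq$ along the morphism of formal schemes $\iota \colon \Spf \C[\![Q]\!][\![s_0,\dots,s_N]\!] \to \Spf \C[\![Q]\!][\![\bt]\!]$ induced by $\frs$. Concretely, composing $\iota$ with the inverse mirror map yields a continuous ring homomorphism $\C[\![Q]\!][\![\by]\!] \to \C[\![Q]\!][\![s_0,\dots,s_N]\!]$ sending $\by$ to $\by(\frs(\sigma))$. This is well-defined because $\frs_k(0)|_{Q=0} = 0$ ensures that $\frs$ takes the basepoint $(\sigma = 0, Q = 0)$ to $(\bt = 0, Q = 0)$, and the mirror map in turn sends $(\bt = 0, Q = 0)$ to the distinguished point $\ybase$ of the $\by$-space (see Lemma \ref{lem:mirror_map}).

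The first step would be to identify $\GM(\frs^*F)$ with the pullback of $\GM(F)$ along this morphism. The key observation is that the Gauss-Manin differential $zd + dF\wedge$ is $\C[z][\![Q]\!][\![\by]\!][\lambda]$-linear (and hence, after setting $\lambda = 0$, is $\C[z][\![Q]\!][\![\by]\!]$-linear), since by definition $d$ differentiates only the $x$-coordinates and is linear over the ground ring. Pulling back this linear map replaces $F(x;\by)$ by $F(x; \by(\frs(\sigma))) = (\frs^*F)(x;\sigma)$, giving exactly the defining sequence of $\GM(\frs^*F)$. By right-exactness of (completed) tensor product, the pullback of the cokernel equals the cokernel of the pullback, so $\GM(\frs^*F)$ is indeed the base change of $\GM(F)$.

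The second step is to apply the same base change to the target. Since $H^*(X_\Sigma)[z][\![Q]\!][\![\bt]\!]$ is topologically free over $\C[z][\![Q]\!][\![\bt]\!]$ with $\C[z]$-basis given by a basis of $H^*(X_\Sigma)$, its pullback along $\iota$ is $H^*(X_\Sigma)[z][\![Q]\!][\![s_0,\dots,s_N]\!]$. Because $\Theta_\noneq$ is an isomorphism of $\C[z][\![Q]\!][\![\bt]\!]$-modules by \eqref{eq:Theta_noneq}, pulling it back yields the desired isomorphism $\frs^*\Theta_\noneq$. The main technical subtlety throughout is verifying that completed tensor products commute appropriately with the formal power series constructions in play --- specifically, that
\[
\C[z]\{\M\}[\![\by]\!] \hotimes_{\C[z][\![Q]\!][\![\by]\!]} \C[z][\![Q]\!][\![s_0,\dots,s_N]\!] = \C[z]\{\M\}[\![s_0,\dots,s_N]\!].
\]
This is routine once one observes that the relevant $\by$-adic topology on the left matches the $(s_0,\dots,s_N)$-adic topology on the right under the basepoint-preserving ring homomorphism above, and it is the only point that requires care.
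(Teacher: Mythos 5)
Your overall strategy (base change along $\frs$) is the same as the paper's, and your identification of the \emph{terms} of the complex --- that $\C[z]\{\M\}[\![\by]\!]\hotimes_{\C[z][\![Q]\!][\![\by]\!]}\C[z][\![Q]\!][\![s_0,\dots,s_N]\!]$ is $\C[z]\{\M\}[\![s_0,\dots,s_N]\!]$, and similarly for the target --- is fine, since these are topologically free modules. But there is a genuine gap at the central step: you write ``by right-exactness of (completed) tensor product, the pullback of the cokernel equals the cokernel of the pullback.'' The paper opens its proof by noting precisely that the completed tensor product is \emph{not} right exact in general; the modules here are topologically free on infinitely many generators over non-Noetherian, non-adic rings, so the usual right-exactness of $\otimes$ does not survive completion, and this is exactly the point that needs an argument rather than a citation. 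You flag a ``technical subtlety'' at the end, but only about matching topologies on the terms, not about exactness of the base-changed sequence, which is where the real issue lives.

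The paper's fix is an explicit diagram chase that uses two special features of the situation: (i) $\frs^*$ is surjective on every term because $\frs$ is a \emph{section}, and (ii) $\Theta_\noneq$ is already known to be an isomorphism at the $\by$-level. Given $\varphi\in\C[z]\{\M\}[\![s_0,\dots,s_N]\!]\omega$ killed by $\frs^*\Theta_\noneq$, one lifts it to $\hat\varphi$ over $\by$, writes $\Theta_\noneq(\hat\varphi)=\sum_i f_i T_i$ with $\frs^*f_i=0$, and corrects the lift to $\hat\varphi-\sum_i f_i\widehat{T}_i$ (with $\widehat{T}_i$ preimages of $T_i$ under $\Theta_\noneq$), which lies in the image of $zd+dF\wedge$ and still maps to $\varphi$; applying $\frs^*$ and using surjectivity of the left vertical map then places $\varphi$ in the image of $zd+d(\frs^*F)\wedge$. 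To repair your proof you would need to either supply this chase or prove right-exactness of the completed tensor product in this specific setting, which amounts to the same work.
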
 
\begin{proof} 
This is slightly subtle as the completed tensor product 
is not right exact in general. 
For any module $M$, we write $M[\![s]\!] = M[\![s_0,\dots,s_N]\!]$ for simplicity. 
The section $\frs$ defines a continuous homomorphism 
$\frs^* \colon \C[z][\![Q]\!][\![\by]\!]\to \C[z][\![Q]\!][\![s]\!]$ 
by $y_k \mapsto y_k(\frs(\sigma))$; this is surjective as $\frs$ is a section. 
We have the commutative diagram: 
\[
\begin{CD}
\bigoplus_{i=1}^D \C[z]\{\M\}[\![\by]\!] \omega_i 
@>{zd + dF\wedge}>> \C[z]\{\M\}[\![\by]\!] \omega 
@>{\Theta_\noneq}>> 
H^*(X_\Sigma) [z][\![Q]\!][\![\by]\!] @>>> 0 
\\ 
@VV{\frs^*}V @VV{\frs^*}V @VV{\frs^*}V \\ 
\bigoplus_{i=1}^D \C[z]\{\M\}[\![s]\!] \omega_i 
@>{zd + d(\frs^*F)\wedge}>> \C[z]\{\M\}[\![s]\!] \omega 
@>{\frs^*\Theta_\noneq}>> H^*(X_\Sigma)[z][\![Q]\!][\![s]\!] 
@>>> 0 
\end{CD} 
\]
where  the top row is exact and the vertical arrows (induced by $\frs$) 
are all surjective. We need to show that the bottom row is exact. 
The surjectivity of $\frs^*\Theta_\noneq$ is obvious. 
Let $\varphi \in \C[z]\{\M\}[\![s]\!]\omega$ be in the kernel 
of $\frs^*\Theta_\noneq$. Choose a lift 
$\hat{\varphi}\in \C[z]\{\M\}[\![\by]\!]\omega$ of $\varphi$ 
such that $\frs^* \hat{\varphi} = \varphi$. 
Then $\frs^* ( \Theta_\noneq(\hat{\varphi})) = 0$. 
When we write $\Theta_\noneq(\hat{\varphi}) 
= \sum_{i=0}^N f_i(z,Q,\by) T_i$ with $f_i \in \C[z][\![Q]\!][\![\by]\!]$, 
this means $\frs^* f_i = 0$ for all $0\le i\le N$. 
Choosing a preimage $\widehat{T}_i \in \C[z]\{\M\}[\![\by]\!] \omega$ of $T_i$ 
under $\Theta_\noneq$, 
we have that $\hat{\varphi} - \sum_{i=0}^N f_i(z,Q,\by) \widehat{T}_i$ 
is in the kernel of $\Theta_\noneq$ and this maps to $\varphi$ under 
$\frs^*$. Now a diagram chasing shows $\varphi$ is in the image of 
$zd + d (\frs^*F)\wedge$. 
\end{proof} 
The pulled-back quantum connection $\frs^* \nabla$ on 
$H^*(X_\Sigma)[z][\![Q]\!][\![s_0,\dots,s_N]\!]$ is given by: 
\begin{align*} 
\frs^* \nabla_{\parfrac{}{s_i}} & = 
\parfrac{}{s_i} + \frac{1}{z} T_i \star_\sigma \\
\frs^*\nabla_{z\parfrac{}{z}} & = \Gr_0 - \frac{1}{z} 
\left(c_1(X_\Sigma) + \sum_{i=0}^N \left(1- \frac{1}{2} \deg T_i\right) 
s_i T_i \right) \star_\sigma 
\end{align*} 
where $\Gr_0 \colon H^*(X_\Sigma)[z] \to H^*(X_\Sigma)[z]$ is 
the linear operator given by $\Gr(\alpha z^n) = ( n + p) \alpha$ for 
$\alpha \in H^{2p}(X_\Sigma)$ and $\star_\sigma$ is the non-equivariant 
quantum product.  
This is the usual quantum connection in non-equivariant theory. 
Thus we obtain: 

\begin{theorem} 
\label{thm:noneq} 
For any formal section $\frs \colon H^*(X_\Sigma)[\![Q]\!] \to 
H^*_T(X_\Sigma) [\![Q]\!]$, the non-equivariant mirror 
isomorphism $\frs^* \Theta_\noneq$ intertwines the quantum connection 
and the Gauss-Manin connection including in the $z$-direction. 
\end{theorem}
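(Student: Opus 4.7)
The plan is to deduce Theorem \ref{thm:noneq} essentially formally from Theorem \ref{thm:mirror_isom} together with the preceding lemma that identifies $\frs^*\Theta_\noneq$ as an isomorphism. I would proceed in two stages: first establish that the non-equivariant limit $\Theta_\noneq$ of Theorem \ref{thm:mirror_isom} intertwines the connection in every direction including $z\partial_z$, and then show that pulling back along $\frs$ preserves this property because both sides of the mirror isomorphism are functorial under substitution in the base parameters.

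For the first stage, the $\by$-direction intertwining for $\Theta_\noneq$ is immediate by reducing Theorem \ref{thm:mirror_isom}(3) modulo $\lambda_1,\dots,\lambda_D$, since the equivariant Gauss-Manin connection in the $\by$-direction is $\lambda$-polynomial (the connection 1-forms come from derivatives of $F$, not $F_\lambda$). For the $z$-direction, observe that on the A-side the operator
\[
\nabla_{z\partial_z} = (\Gr_0+\cE_\bt) - \nabla_{\cE_\bt}
\]
is built purely out of the grading and the Euler vector field, and likewise on the B-side $\nabla_{z\partial_z}=\Gr - \nabla_{\cE_\by}$. Since Theorem \ref{thm:mirror_isom}(5) says $\Theta$ intertwines $\Gr$ with $\cE_\by + \Gr_0$, and the mirror map satisfies $\tau_*\cE_\by = \cE_\bt$ by Lemma \ref{lem:grading}, passing to the non-equivariant limit shows $\Theta_\noneq$ intertwines $\nabla_{z\partial_z}$ as well. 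It is worth checking that the non-equivariant limit of the grading operator on $H_\hT^*(X_\Sigma)$ descends correctly to $\Gr_0+\cE_\bt$ under the identification $\Gr_0(z\alpha) = z\alpha + z\Gr_0(\alpha)$; this is the one point where care is needed, but it is a direct bookkeeping check from the definitions in \S\ref{subsec:noneq_mirror_isom}.

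For the second stage, the chain rule gives $\frs^*\nabla_{\partial/\partial s_i} = \sum_k \frac{\partial y_k(\frs(\sigma))}{\partial s_i} \nabla_{\partial/\partial y_k}$ on the B-side, and the analogous identity on the A-side uses $\frs^*\nabla_{\partial/\partial s_i}= \sum_k \frac{\partial t_k(\frs(\sigma))}{\partial s_i}\nabla_{\partial/\partial t_k}$; these match via $\Theta_\noneq$ because the latter is already known to intertwine the connections in the $\by$/$\bt$-directions and $\frs^*$ is compatible with the substitution. To identify the pulled-back quantum connection with the explicit formula stated in the excerpt, I would compute $\frs^*\nabla_{\partial/\partial s_i}$ and $\frs^*\nabla_{z\partial_z}$ via chain rule from the defining formulas of $\nabla_{\partial/\partial t_k}$ and $\nabla_{z\partial_z}$ on $H^*(X_\Sigma)[z][\![Q]\!][\![\bt]\!]$, using that $\frs_*(\partial/\partial s_i) = T_i$ (to leading order, controlled by the condition $\lim_{Q\to 0}\frs(\sigma)|_{\sigma=0}=0$ and the fact that the non-equivariant limit of $\frs(\sigma)$ is $\sigma$) and that $\frs_*\cE_\bt$ must correspond to the homogeneity operator $c_1(X_\Sigma) + \sum_i(1-\tfrac{1}{2}\deg T_i)s_i T_i$ under the Euler vector field formula \eqref{eq:Euler} and the identification $\tau_*\cE_\by=\cE_\bt$.

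The main obstacle I expect is the last point: verifying that the two Euler vector fields match correctly after pullback by $\frs$, i.e.\ that $\frs$ intertwines the homogeneity structures even though $\frs$ is not required to be equivariant beyond having the correct non-equivariant limit. This should follow from the fact that the degree-zero part of $\frs(\sigma)-\sigma$ lies in the equivariant parameters (hence vanishes in the non-equivariant limit) combined with the divisor-equation-like behavior of the mirror map under $\cE_\bt$; if needed, one could first treat the canonical choice $\frs(\sigma)=\sigma$ (where everything is manifest) and then observe that the intertwining in the $z$-direction is automatic because it is determined by $\nabla$ in the $\sigma$-directions and the homogeneity, both of which are independent of $\frs$ once the non-equivariant limit is fixed.
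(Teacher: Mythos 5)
Your proposal is correct and follows essentially the same route as the paper: take the non-equivariant limit of Theorem \ref{thm:mirror_isom}, obtain the $z$-direction intertwining from the definition $\nabla_{z\parfrac{}{z}}=\Gr-\nabla_{\cE_\by}$ (resp.\ $(\Gr_0+\cE_\bt)-\nabla_{\cE_\bt}$) together with parts (3) and (5) of that theorem and $\tau_*\cE_\by=\cE_\bt$, and then pull back along $\frs$. Your worry about matching Euler vector fields under $\frs$ dissolves exactly as you suspect, since $\nabla_{z\parfrac{}{z}}$ on the A-side involves no base derivatives and depends on the parameter only through its non-equivariant limit, which is $\sigma$ by the definition of a formal section.
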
 

Define the Jacobi ring of $\frs^* F$ to be 
\[
J(\frs^* F) = \C\{\M\}[\![s_0,\dots,s_N]\!]/
\left( x_1 \partial_{x_1} (\frs^*F)(x;\sigma), 
\dots, x_D \partial_{x_D} (\frs^*F)(x;\sigma) \right).  
\]
Exactly in the same way as we deduced Corollary \ref{cor:equiv_Jacobi_ring} 
from Theorem \ref{thm:mirror_isom}, 
we deduce the following corollary from Theorem \ref{thm:noneq}: 

\begin{corollary} 
\label{cor:Jacobiring_noneq} 
We have a $\C[\![Q]\!][\![s_0,\dots,s_N]\!]$-algebra isomorphism 
$J(\frs^*F) \xrightarrow{\cong} 
(H^*(X_\Sigma)[\![Q]\!][\![s_0,\dots,s_N]\!], \star_\sigma)$ 
that sends $\partial_{s_i} (\frs^*F)$ to $T_i$. 
\end{corollary}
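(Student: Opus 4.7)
I would mimic the deduction of Corollary~\ref{cor:equiv_Jacobi_ring} from Theorem~\ref{thm:mirror_isom}.

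First I would identify $\GM(\frs^*F)/z\,\GM(\frs^*F) \cong J(\frs^*F)\cdot \omega$. Reducing modulo $z$ the image of $zd + d(\frs^*F)\wedge$ collapses to the image of $d(\frs^*F)\wedge$; computed on the generators $\omega_i = \iota(x_i\partial_{x_i})\omega$ this equals $\pm (x_i\partial_{x_i}(\frs^*F))\,\omega$, so the quotient is exactly the Jacobi ideal. Theorem~\ref{thm:noneq} then says that $\frs^*\Theta_\noneq$ is a $\C[z][\![Q]\!][\![s_0,\dots,s_N]\!]$-module isomorphism intertwining the Gauss-Manin and quantum connections, so reducing modulo $z$ yields a $\C[\![Q]\!][\![s_0,\dots,s_N]\!]$-module isomorphism
\[
\bar\Theta \colon J(\frs^*F)\cdot \omega \xrightarrow{\cong} H^*(X_\Sigma)[\![Q]\!][\![s_0,\dots,s_N]\!].
\]

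To upgrade this to a ring isomorphism, I use the intertwining in the $\partial_{s_i}$-direction. On one hand $z\,\nabla^{\rm GM}_{\partial_{s_i}}\omega = \partial_{s_i}(\frs^*F)\,\omega$; on the other $z\,\frs^*\nabla^{\rm QC}_{\partial_{s_i}}(u) = z\,\partial_{s_i} u + T_i\star_\sigma u$. Applying $\frs^*\Theta_\noneq$ and reducing modulo $z$ gives
\[
\bar\Theta\bigl(\partial_{s_i}(\frs^*F)\cdot \omega\bigr) = T_i \star_\sigma \bar\Theta(\omega).
\]
Applying $z\,\nabla^{\rm GM}_{\partial_{s_i}}$ to an arbitrary element $g\cdot\omega$ and iterating shows more generally that left multiplication by $\partial_{s_i}(\frs^*F)$ on $J(\frs^*F)\cdot\omega$ corresponds under $\bar\Theta$ to quantum multiplication by $T_i$ on $H^*(X_\Sigma)[\![Q]\!][\![s_0,\dots,s_N]\!]$.

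Since $\bar\Theta(\omega) = \Upsilon^\noneq|_{z=0}$ reduces to $1$ at $\sigma = 0,\ Q = 0$ (inherited from $\Upsilon|_{\by=\ybase,Q=0,z=0}=1$), it is a $\star_\sigma$-unit in the complete local ring. Writing $\bar\Theta(\omega)^{-1}$ for its $\star_\sigma$-inverse, I define
\[
\psi \colon J(\frs^*F) \longrightarrow H^*(X_\Sigma)[\![Q]\!][\![s_0,\dots,s_N]\!], \qquad \psi(g) := \bar\Theta(g\cdot \omega)\star_\sigma \bar\Theta(\omega)^{-1}.
\]
This is a $\C[\![Q]\!][\![s_0,\dots,s_N]\!]$-linear bijection with $\psi(\partial_{s_i}(\frs^*F)) = T_i$. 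The iterated intertwining together with the cancellation of $\bar\Theta(\omega)$ gives $\psi(\partial_{s_i}(\frs^*F)\cdot g) = T_i\star_\sigma \psi(g)$, so $\psi$ is multiplicative on the subring $R \subset J(\frs^*F)$ generated by $\{\partial_{s_i}(\frs^*F)\}$ over $\C[\![Q]\!][\![s_0,\dots,s_N]\!]$. Since $\psi(R)$ contains $T_0,\dots,T_N$ which generate $(H^*(X_\Sigma)[\![Q]\!][\![s_0,\dots,s_N]\!],\star_\sigma)$ as a $\C[\![Q]\!][\![s_0,\dots,s_N]\!]$-algebra (a rank-$\chi(X_\Sigma)$ Nakayama argument), one has $\psi(R) = H^*(X_\Sigma)[\![Q]\!][\![s_0,\dots,s_N]\!]$, and bijectivity of $\psi$ then forces $R = J(\frs^*F)$.

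The main obstacle is precisely this upgrade from a module isomorphism to a ring isomorphism: it depends on the invertibility of $\bar\Theta(\omega)$ under $\star_\sigma$ and on propagating multiplicativity from the single generators $\partial_{s_i}(\frs^*F)$ to all of $J(\frs^*F)$ via the bijectivity-plus-generation argument.
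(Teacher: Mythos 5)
Your proposal is correct and takes essentially the same route as the paper, which deduces this corollary from Theorem \ref{thm:noneq} ``exactly in the same way'' as Corollary \ref{cor:equiv_Jacobi_ring} is deduced from Theorem \ref{thm:mirror_isom}: reduce the mirror isomorphism modulo $z$ to identify the cokernel with $J(\frs^*F)\cdot\omega$ and use the intertwining of the Gauss--Manin and quantum connections to see that multiplication by $\partial_{s_i}(\frs^*F)$ corresponds to $T_i\star_\sigma$. Your normalization by the $\star_\sigma$-inverse of $\bar\Theta(\omega)$ and the generation/bijectivity argument upgrading the module isomorphism to a ring isomorphism simply make explicit steps that the paper leaves implicit.
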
 

We remark a relationship between $\frs^* F$ and a 
universal unfolding of the original potential $F(x;\ybase)$ 
(see \eqref{eq:y_origin} for $\ybase$). 
We say that $G(x;r)\in \C\{\M\}[\![r_0,\dots,r_N]\!]$ 
is a \emph{universal unfolding} of $F(x;\ybase)$ if 
\begin{itemize} 
\item $G(x;0) = F(x;\ybase)$ and 
\item $\partial_{r_0} G(x;r)|_{r=0},\dots,
\partial_{r_N}G(x;r)|_{r=0}$ form a $\C[\![Q]\!]$-basis of 
the Jacobi ring $J(F(x;\ybase)) = \C\{\M\}/
\langle x_i \partial_{x_i} F(x;\ybase), i=1,\dots, D\rangle$.  
\end{itemize} 
For example, if $\{\phi_i(x)\}_{i=0}^N\subset \C\{\M\}$ 
is a $\C[\![Q]\!]$-basis of the Jacobi ring $J(F(x;\ybase))$, then 
\[
G(x;r) = F(x;\ybase) + \sum_{i=0}^N r_i \phi_i(x) 
\]
gives a universal unfolding of $F(x;\ybase)$. 
We can choose a section $\frs$ so that the map 
$\Spf \C[\![Q]\!][\![s_0,\dots,s_N]\!] \to \Spf \C[\![Q]\!][\![\by]\!]$, 
$\sigma \mapsto \by(\frs(\sigma))=
\{y_k(\frs(\sigma))\}_{k\in \bN\cap |\Sigma|}$ passes through the 
$\C[\![Q]\!]$-valued point $\ybase$, i.e.~
we have $\by(\frs(\sigma^*)) = \ybase$ for some 
$\sigma^*\in H^*(X_\Sigma)[\![Q]\!]$ with 
$\sigma^*|_{Q=0} = 0$. 
Then $\frs^*F|_{\sigma = \sigma^* + \sum_{i=0}^N r_i T_i}$ gives 
a universal unfolding of $F(x;\ybase)$ by Corollary \ref{cor:Jacobiring_noneq} 
(in particular, the Jacobi ring $J(F(x;\ybase))$ is a free $\C[\![Q]\!]$-module 
of rank $\dim H^*(X_\Sigma)$). 
More generally, we have the following. 

\begin{proposition} 
\label{prop:universal_unfolding} 
Let $G(x;r) \in \C\{\M\}[\![r_0,\dots,r_N]\!]$ be an element 
with $G(x;0) = F(x;\ybase)$. The following are equivalent: 
\begin{itemize} 
\item[(1)] $G(x;r)$ is a universal unfolding of $F(x;\ybase)$; 

\item[(2)] there exist a formal section $\frs \colon H^*(X_\Sigma)[\![Q]\!] 
\to H^*_T(X_\Sigma)[\![Q]\!]$ as above 
and a formal invertible change of variables 
$s_j = s_j(r)\in \C[\![Q]\!][\![r_0,\dots,r_N]\!]$ 
with $s_j(0)|_{Q=0}=0$  
such that $G(x;r)=\frs^*F|_{\sigma = \sum_{j=0}^N s_j(r) T_j}$. 
\end{itemize} 
\end{proposition}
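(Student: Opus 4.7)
For $(2) \Rightarrow (1)$, the plan is to apply Corollary \ref{cor:Jacobiring_noneq} together with the chain rule. Given $G(x;r) = \frs^*F|_{\sigma=\sum_j s_j(r)T_j}$, differentiation yields
\[
\partial_{r_i}G(x;r) = \sum_{j=0}^N \partial_{s_j}(\frs^*F)\big|_{\sigma=\sum s_j(r)T_j} \cdot \partial_{r_i}s_j(r),
\]
and under the isomorphism $J(\frs^*F) \cong H^*(X_\Sigma)[\![Q]\!][\![s_0,\dots,s_N]\!]$ of Corollary \ref{cor:Jacobiring_noneq} (which sends $\partial_{s_j}(\frs^*F) \mapsto T_j$), specializing at $r = 0$ sends $\partial_{r_i}G|_{r=0}$ to $\sum_j (\partial_{r_i}s_j(0)) T_j \in J(F(x;\ybase)) \cong H^*(X_\Sigma)[\![Q]\!]$. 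Since $s(r)$ is an invertible formal change of variables, the matrix $(\partial_{r_i}s_j(0))$ is invertible in $\C[\![Q]\!]$, so $\{\partial_{r_i}G|_{r=0}\}_{i=0}^N$ is a $\C[\![Q]\!]$-basis of $J(F(x;\ybase))$.

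For $(1) \Rightarrow (2)$, I would first expand $G(x;r) = \sum_{k\in\bN\cap|\Sigma|} g_k(r)\, w^{\Psi(k)}$ with $g_k(r) \in \C[\![Q]\!][\![r]\!]$; this defines a unique formal morphism $\by(r) \colon \Spf\C[\![Q]\!][\![r]\!] \to \Spf\C[\![Q]\!][\![\by]\!]$ with $\by(0) = \ybase$ such that $G(x;r) = F(x;\by(r))$. Composing with the mirror map of Proposition \ref{prop:mirror_map}, set $\ttau(r) := \tau(\by(r))$ and let $\sigma(r) \in H^*(X_\Sigma)[\![Q]\!][\![r]\!]$ be its non-equivariant limit. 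Writing $\sigma(r) = \sum_{j=0}^N s_j(r)T_j$ with $s_j(r) \in \C[\![Q]\!][\![r]\!]$ gives $s_j(0)|_{Q=0} = 0$, since by the expansion \eqref{eq:tau_Upsilon_expansion} the value $\tau^* := \tau(\ybase)$ satisfies $\tau^*|_{Q=0} = 0$.

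The crucial step is to prove that $r \mapsto s(r)$ is a formal invertible change of variables. For this I would combine Corollary \ref{cor:equiv_Jacobi_ring} with the specializations $\by \mapsto \ybase$ and $\lambda \to 0$ to produce a $\C[\![Q]\!]$-module isomorphism $J(F(x;\ybase)) \cong H^*(X_\Sigma)[\![Q]\!]$ sending $w^{\Psi(k)} = \partial_{y_k}F|_{\by=\ybase}$ to the non-equivariant limit of $S_k(\tau^*) = \partial_{y_k}\tau(\by)|_{\by=\ybase}$. Under this isomorphism,
\[
\partial_{r_i}G(x;r)\big|_{r=0} = \sum_k \partial_{r_i}g_k(0)\,w^{\Psi(k)} \;\longmapsto\; \partial_{r_i}\sigma(r)\big|_{r=0} = \sum_j \partial_{r_i}s_j(0)\,T_j,
\]
so the universal-unfolding hypothesis (1) is equivalent to invertibility of $(\partial_{r_i}s_j(0))$ in $\C[\![Q]\!]$; by Nakayama this reduces to invertibility modulo $Q$, which is exactly the invertibility of the formal map $s$.

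Finally, let $r(\sigma)$ denote the formal inverse of $s$ and set $\frs(\sigma) := \ttau(r(\sigma))$. The non-equivariant limit of $\frs(\sigma)$ is $\sigma(r(\sigma)) = \sigma$, so $\frs$ is a section; the normalization $\frs_k(0)|_{Q=0} = 0$ follows from $\tau^*|_{Q=0} = 0$, and the asymptotic $\lim_{|k|\to\infty}\frs_k(\sigma) = 0$ follows from the corresponding property of the mirror map established in the proof of Lemma \ref{lem:mirror_map}. By construction $\frs^*F|_{\sigma=\sum_j s_j(r)T_j}(x) = F(x;\by(\frs(\sigma(r)))) = F(x;\by(r)) = G(x;r)$. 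The main obstacle is the crucial step above — linking the universal-unfolding condition to invertibility of the non-equivariant Kodaira-Spencer map $r \mapsto \sigma(r)$ — which requires careful bookkeeping of the equivariant Jacobi ring isomorphism through the non-equivariant limit.
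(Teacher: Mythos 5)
Your proposal is correct and follows essentially the same route as the paper: for (1) $\Rightarrow$ (2) you write $G(x;r)=F(x;\by(r))$, compose with the mirror map and the non-equivariant limit, identify the derivative of $r\mapsto\sigma(r)$ at $r=0$ with the Kodaira--Spencer classes $[\partial_{r_i}G|_{r=0}]$ via the non-equivariant limit of Corollary \ref{cor:equiv_Jacobi_ring} restricted to the base point $\ybase$, and conclude by the formal inverse function theorem. The only difference is that you spell out details the paper leaves implicit (the chain-rule verification of (2) $\Rightarrow$ (1) and the explicit construction of $\frs$ from the inverse of $s$), which is harmless.
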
 
\begin{proof} 
We write $M[\![r]\!] = M[\![r_0,\dots,r_N]\!]$ for simplicity. 
Corollary \ref{cor:Jacobiring_noneq} shows that part (2) implies part (1). 
Conversely, suppose that a universal unfolding 
$G(x;r)$ is given. 
We can write $G(x;r) = F(x;\by(r))$ for some formal morphism 
$r \mapsto \by(r)$, 
$\Spf(\C[\![Q]\!][\![r]\!]) \to \Spf(\C[\![Q]\!][\![\by]\!])$. 
Composing this with the mirror map $\tau(\by)$ and the non-equivariant 
limit, we obtain a map from $\Spf(\C[\![Q]\!][\![r]\!])$ to 
the formal neighbourhood of zero in $H^*(X_\Sigma)[\![Q]\!]$. 
It suffices to show that this is an isomorphism. 
By the formal inverse function theorem, it suffices to 
show that the derivative at $r=0$ is an isomorphism, 
i.e.~the non-equivariant limits of $\parfrac{\tau(\by(r))}{r_i}|_{r=0}$, 
$i=0,\dots,N$ form a basis of $H^*(X_\Sigma)[\![Q]\!]$ 
over $\C[\![Q]\!]$. 
On the other hand, the non-equivariant limit of 
Corollary \ref{cor:equiv_Jacobi_ring} gives 
an isomorphism of $\C[\![Q]\!][\![\by]\!]$-algebras: 
\[
J(F) \cong (H^*(X_\Sigma)[\![Q]\!][\![\bt]\!], \star_\tau) 
\]
which sends $[\partial_{y_k} F(x;\by)]$ to the non-equivariant 
limit of $\parfrac{\tau(\by)}{y_k}$. 
This isomorphism restricted to the base point $\ybase$ 
sends $[\partial_{r_i}G(x;r)|_{r=0}]$ 
to the non-equivariant limit of $\parfrac{\tau(\by(r))}{r_i}|_{r=0}$. 
The conclusion follows. 
\end{proof}

\begin{remark} 
Mirror symmetry for toric varieties has been studied by many people. 
As noted in the Introduction, Givental \cite{Givental:ICM,Givental:toric_mirrorthm} 
and Hori-Vafa \cite{Hori-Vafa} 
proposed Landau-Ginzburg mirrors for toric varieties. 
There are studies on non-compact case (local mirror symmetry) 
\cite{LLY:principleI, Givental:elliptic, CKYZ, 
Konishi-Minabe:localB,Mochizuki_T:twistor_GKZ}, 
Frobenius manifold \cite{Barannikov:projective, 
Douai-Sabbah:II, Iritani:genmir, Reichelt-Sevenheck:logFrob}, 
semi-simplicity \cite{Iritani:coLef,Ostrover-Tyomkin}, 
toric orbifolds \cite{CCLT:wp, Iritani:integral, Guest-Sakai, 
Douai-Mann:wp, Gonzalez-Woodward:tmmp, 
CCIT:mirrorthm, Cheong-CF-Kim,You:Seidel}, an approach using 
Lagrangian Floer theory \cite{FOOO:toricI, 
FOOO:toricII, FOOO:toric_mirrorsymmetry,CLLT:Seidel}, 
tropical geometry \cite{Gross:tropical_P2}, 
quantum Kirwan maps \cite{Woodward:qKirwan, 
Gonzalez-Woodward:tmmp} 
and quasimap spaces \cite{CF-Kim:wallcrossing_genuszero_qmaps, 
Cheong-CF-Kim}, etc. 
In non-semipositive case, we need to take a certain ``$Q$-adic'' completion  
of the Gauss-Manin system; this has been pointed out by the author 
\cite{Iritani:genmir}, \cite[Theorem 1.2]{Iritani:coLef}. 
An isomorphism between a ``completed'' Jacobi ring and 
quantum cohomology was proved by Fukaya-Oh-Ohta-Ono  
\cite[Theorem 1.9]{FOOO:toricI}, \cite[Theorem 1.2.34]{FOOO:toric_mirrorsymmetry} 
using Lagrangian Floer theory 
and by Gonz\'{a}lez-Woodward 
\cite[Theorems 1.16, 4.23]{Gonzalez-Woodward:tmmp} 
using quantum Kirwan map. 
(There are differences on Novikov rings and the choice of 
completions among the literature.)  
In the analytic setting (in semipositive case), 
results analogous to Theorem \ref{thm:noneq} 
are given, e.g.~in \cite[Proposition 4.8]{Iritani:integral}, 
\cite[Theorem 4.11]{Reichelt-Sevenheck:logFrob}, 
\cite[Theorem 5.1.1]{Douai-Mann:wp}, 
\cite[Theorem 7.43]{Mochizuki_T:twistor_GKZ}. 
\end{remark}

\subsection{Reparametrization group} 
In the equivariant setting, the primitive form was given by 
an actual differential form (see \S\ref{subsec:primitive_form}). 
In the non-equivariant setting, 
however, there are many choices for cochain-level primitive 
forms. Consider the commutative diagram: 
\[
\xymatrix{ 
\GM(F_\lambda) \cong \C[z]\{\M\}[\![\by]\!] \omega 
\ar[r]^{\phantom{ABC} \Theta} \ar[d]  & 
H_T^*(X_\Sigma)[z][\![Q]\!][\![\by]\!] 
\ar[d] \\ 
\GM(F) \ar[r]^{\Theta_\noneq \phantom{AB}} 
& H^*(X_\Sigma)[z][\![Q]\!][\![\by]\!]. 
} 
\]
A primitive form can be chosen to be any element in $\C[z]\{\M\}[\![\by]\!]\omega$ 
which maps to $1$ in the bottom right corner. 
We also have the freedom to choose a formal section $\frs \colon H^*(X_\Sigma)[\![Q]\!] 
\to H^*_T(X_\Sigma)[\![Q]\!]$ 
as in \S \ref{subsec:noneq_mirror_isom} 
to pull it back to the base $H^*(X_\Sigma)[\![Q]\!]$. 
We show that all cochain-level primitive forms which are obtained 
in this way and coincide 
with $\omega$ at the origin $\sigma = Q= 0$ are related by 
reparametrizations of the $x$-variables.  

\begin{definition}[reparametrization group] 
Consider a formal change of variables of the form: 
\[
x_i \mapsto 
\tilde{x}_i = x_i \exp
\left(\sum_{k\in \bN \cap |\Sigma|} \epsilon_{k,i} w^{\Psi(k)}\right) 
\qquad 
1\le i\le D, 
\]
where $\bepsilon=\{ \epsilon_{k,i} : k\in \bN \cap |\Sigma|, 1\le i\le D\}$ 
is a set of formal parameters. 
These transformations form a (non-commutative) 
formal group $\cG$ over $\C[\![Q]\!]$ by composition. 
As a formal scheme, $\cG$ is isomorphic to 
$\Spf(\C[\![Q]\!][\![\bepsilon]\!])$.   
We also consider the jet group $J\cG$ of $\cG$, which consists of 
formal transformations: 
\[
x_i \mapsto 
\tilde{x}_i = x_i \exp
\left(\sum_{k\in \bN \cap |\Sigma|} \sum_{n=0}^\infty 
\epsilon_{k,i,n} w^{\Psi(k)} z^n \right)  
\qquad 
1\le i\le D 
\]
containing the parameter $z$. 
The group $J\cG$ is isomorphic to $\Spf(\C[\![Q]\!][\![\tbepsilon]\!])$ 
with $\tbepsilon = \{ \epsilon_{k,i,n} : k \in \bN \cap |\Sigma|, 
1\le i\le D, n=0,1,2,\dots\}$. 
Note that $\cG$ acts on the module $\C\{\M\}$ 
and $J\cG$ acts on the module $\C[z]\{\M\}$. 
Let $e \in J\cG$ denote the identity element. 
The generators $[\partial/\partial \epsilon_{k,i,n}]_{e}$ of the Lie algebra 
$T_e (J\cG)$ correspond to the vector fields: 
\[
W_{k,i,n} :=z^n w^{\Psi(k)} x_i \parfrac{}{x_i} 
\] 
satisfying the commutation relation $[W_{k,i,n}, W_{l,j,p}] 
= Q^{d(k,l)} ( l_i W_{k+l,j,n+p} - k_j W_{k+l,i,n+p})$. 
\end{definition}

The formal group $J\cG$ acts, by change of variables, 
on oscillatory $D$-forms of the form: 
\[
e^{F(x;\by)/z} \omega(\by^+) 
= \exp\left( 
\frac{1}{z}\sum_{k\in \bN \cap |\Sigma|} 
\sum_{n=0}^\infty y_{k,n} z^n w^{\Psi(k)} 
\right)  \omega 
\]
where we set $y_{k,0} = y_k$. 
This defines the $J\cG$-action on $\Spf(\C[\![Q]\!][\![\by,\by^+]\!])$, 
and the generator $W_{k,i,n}$ corresponds to the following vector field:
\[
\tW_{k,i,n} := k_i \parfrac{}{y_{k,n+1}}  + \sum_{l \in \bN\cap |\Sigma|} 
\sum_{j=0}^\infty 
l_i Q^{d(k,l)} y_{l,j} \parfrac{}{y_{k+l,n+j}} 
\]
where the first term in the right-hand side comes from 
the Lie derivative of $\omega = \frac{dx_1}{x_1} 
\cdots \frac{dx_D}{x_D}$. 

\begin{lemma} 
\label{lem:reparametrization} 
Let $\tau(\by)$, $\Upsilon(\by,\by^+,z)$ be the functions 
in Proposition \ref{prop:mirror_map} 
and Theorem \ref{thm:y+}. We have 
\begin{align*} 
\tW_{k,i,n}\tau(\by) & = \lambda_i S_k(\tau(\by)) \delta_{n,0} \\
\tW_{k,i,n}\Upsilon(\by,\by^+,z) & = \lambda_i [z^{n-1} \bS_k(\tau(\by))]_+ 
\Upsilon(\by,\by^+,z). 
\end{align*} 
\end{lemma}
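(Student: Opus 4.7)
The plan is to compute $\tW_{k,i,n}$ applied to $\tau(\by)$ and to $\Upsilon(\by,\by^+,z)$ term by term, substitute the flow equations \eqref{eq:flow} and \eqref{eq:Upsilon_y+flow} for the partial derivatives that appear, and then collapse the resulting sums using three ingredients: the Seidel representation $\bS_k(\tau)\circ\bS_l(\tau)=Q^{d(k,l)}\bS_{k+l}(\tau)$ from Proposition \ref{prop:shift}, the identity $\sum_{l\in\bN\cap|\Sigma|} l_i y_l\, S_l(\tau(\by)) = \lambda_i$ obtained by contracting the linear relation of Remark \ref{rem:linear_relation} with $\chi=\lambda_i\in\bN^*\otimes\C$, and the shift commutation $\bS_k(\tau)\circ\lambda_i=(\lambda_i-k_iz)\bS_k(\tau)$.

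For the first identity, note that $\tau(\by)$ depends only on the variables $\{y_{l,0}=y_l\}$, so for $n\ge 1$ every derivative appearing in $\tW_{k,i,n}$ annihilates $\tau(\by)$. For $n=0$ only the $j=0$ terms of the second summand of $\tW_{k,i,0}$ survive, giving
\[
\tW_{k,i,0}\tau(\by)=\sum_{l\in\bN\cap|\Sigma|} l_i\, Q^{d(k,l)} y_l\, S_{k+l}(\tau(\by)).
\]
Rewriting $Q^{d(k,l)}S_{k+l}(\tau)=S_k(\tau)\star_\tau S_l(\tau)$ and applying the linear relation turns this into $S_k(\tau(\by))\star_\tau\lambda_i=\lambda_i S_k(\tau(\by))$, as claimed.

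For the second identity, I first establish the extended linear relation
\[
\sum_{l\in\bN\cap|\Sigma|}\sum_{j\ge 0} l_i\, y_{l,j}\,\parfrac{\Upsilon(\by,\by^+,z)}{y_{l,j}}=0,
\]
which follows directly from the constraint $\sum_l (\ell_l+\sum_n\ell^+_{l,n})l=0$ in the expansion \eqref{eq:expansion_Upsilon_y+} of $\Upsilon$ and plays the role of Remark \ref{rem:linear_relation} for $\Upsilon$. Combining this with the flow equations \eqref{eq:Upsilon_y+flow} (extracting the $j=0$ correction from $[z^{-1}\bS_l(\tau)]_+$) yields the key identity
\[
\sum_{l\in\bN\cap|\Sigma|}\sum_{j\ge 0} l_i\, y_{l,j}\, z^j\, \bS_l(\tau(\by))\Upsilon=\lambda_i\Upsilon.
\]
Applying $\tW_{k,i,n}$ to $\Upsilon$ and substituting \eqref{eq:Upsilon_y+flow} produces $k_i z^n\bS_k(\tau)\Upsilon$ together with $z^{n-1}\bS_k(\tau)$ acting on the inner sum above, plus (for $n=0$) a correction term from the $[z^{-1}\bS_{k+l}]_+$ truncations. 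The inner sum collapses to $\lambda_i\Upsilon$ by the key identity, and the shift commutation rewrites $z^{n-1}\bS_k(\tau)(\lambda_i\Upsilon)=\lambda_i z^{n-1}\bS_k(\tau)\Upsilon-k_i z^n\bS_k(\tau)\Upsilon$, so the $k_iz^n\bS_k(\tau)\Upsilon$ contributions cancel and one obtains $\lambda_i z^{n-1}\bS_k(\tau)\Upsilon$.

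The main delicate point is the $n=0$ case: the $[z^{-1}\bS_{k+l}]_+$ truncation contributes the correction $-\sum_l l_i y_l Q^{d(k,l)} z^{-1} S_{k+l}(\tau)\star_\tau\Upsilon$, which must be identified with $-\lambda_i z^{-1}S_k(\tau)\star_\tau\Upsilon$. This is achieved by factoring $Q^{d(k,l)}S_{k+l}(\tau)=S_k(\tau)\star_\tau S_l(\tau)$ and applying the linear relation of Remark \ref{rem:linear_relation} once more. Once this identification is in place the two sides of the asserted equality for $\Upsilon$ match; everything else is formal bookkeeping.
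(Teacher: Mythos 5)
Your proposal is correct and follows essentially the same route as the paper's proof: both compute $\tW_{k,i,n}$ term by term, substitute the flow equations, use $\bS_k(\tau)\circ\bS_l(\tau)=Q^{d(k,l)}\bS_{k+l}(\tau)$, the linear relation of Remark \ref{rem:linear_relation} (and its analogue $\sum_{l,j}l_i y_{l,j}\partial_{y_{l,j}}\Upsilon=0$ coming from the expansion \eqref{eq:expansion_Upsilon_y+}), and the commutation $\bS_k(\tau)\circ\lambda_i=(\lambda_i-k_iz)\circ\bS_k(\tau)$. The only difference is cosmetic: you isolate the identity $\sum_{l,j}l_i y_{l,j}z^j\bS_l(\tau)\Upsilon=\lambda_i\Upsilon$ as a separate preliminary step, whereas the paper performs the same substitution inline.
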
 
\begin{proof} 
This is just a calculation. It is obvious that $\tW_{k,j,n} \tau(\by) = 0$ for 
$n>0$. For $n=0$, using the differential equation \eqref{eq:flow}, 
we have 
\begin{align*} 
\tW_{k,i,0} \tau(\by) & = 
\sum_{l\in \bN \cap |\Sigma|} l_i Q^{d(k,l)} y_{l} 
\parfrac{\tau(\by)}{y_{k+l}}
= \sum_{l \in \bN\cap |\Sigma|} l_i Q^{d(k,l)} y_l 
S_{k+l}(\tau(\by)) \\ 
& = \sum_{l\in \bN \cap |\Sigma|} 
l_i y_l S_k(\tau(\by))\star_{\tau(\by)} S_l(\tau(\by)) 
\qquad \text{by Proposition \ref{prop:shift} (3)} \\
& = \lambda_i S_k(\tau(\by)) 
\qquad \qquad \qquad \qquad \qquad 
\quad \; \, \text{by Remark \ref{rem:linear_relation}}. 
\end{align*} 
On the other hand, using the differential equation 
\eqref{eq:Upsilon_y+flow}, we have 
{\allowdisplaybreaks
\begin{align*} 
&\tW_{k,i,n} \Upsilon  = k_i z^n \bS_k(\tau(\by)) \Upsilon
+ \sum_{l\in \bN\cap |\Sigma|}\sum_{j=0}^\infty  
l_i Q^{d(k,l)} y_{l,j} 
[z^{n+j-1}\bS_{k+l}(\tau(\by))]_+ \Upsilon \\ 
& = k_i z^n \bS_k(\tau(\by)) \Upsilon 
+ \sum_{l\in \bN \cap |\Sigma|} 
\sum_{j=0}^\infty l_i y_{l,j} z^{n+j-1} 
\bS_{k}(\tau(\by)) \bS_{l}(\tau(\by)) \Upsilon \\
& \quad - \delta_{n,0} \sum_{l\in \bN\cap |\Sigma|} 
l_i y_l  z^{-1} S_k(\tau(\by)) \star_{\tau(\by)} 
S_l(\tau(\by)) \star_{\tau(\by)} \Upsilon
\qquad \text{by Proposition \ref{prop:shift} (3)} \\ 
& = z^n \bS_k(\tau(\by)) \left[ 
k_i  \Upsilon 
+ \sum_{l\in \bN \cap |\Sigma|} 
\sum_{j=0}^\infty l_i y_{l,j} 
\left( \parfrac{\Upsilon}{y_{l,j}} + \delta_{j,0} z^{-1} 
S_l(\tau(\by)) \star_{\tau(\by)} \Upsilon\right)  
\right] \\ 
& \quad - \delta_{n,0} z^{-1} \lambda_i 
S_k(\tau(\by)) \star_{\tau(\by)} \Upsilon
\qquad \qquad \qquad \qquad \qquad \quad \;\;
\text{by Remark \ref{rem:linear_relation}} \\ 
& = z^n \bS_k(\tau(\by)) (k_i + z^{-1} \lambda_i) \Upsilon 
- \delta_{n,0} \lambda_i S_k(\tau(\by)) \star_{\tau(\by)} \Upsilon 
= \lambda_i [z^{n-1} \bS_k(\tau(\by))]_+ \Upsilon. 
\end{align*} 
where in the last line we used the equation $\sum_{l\in \bN\cap |\Sigma|} 
\sum_{j=0}^\infty l_i y_{l,j} \partial_{y_{l,j}} \Upsilon =0$, 
which follows from the expansion \eqref{eq:expansion_Upsilon_y+}. 
}
\end{proof} 

Let $R$ be a linearly topologized $\C[\![Q]\!]$-algebra. 
Let $R_\nilp$ denote the ideal of $R$ consisting of topologically 
nilpotent elements, i.e.~elements $x\in R$ such that $\lim_{n\to \infty} 
x^n =0$.  We assume that $R$ is complete and Hausdorff and that, 
for any neighbourhood $U$ of zero in $R$, there exists $n\in \N$ such that 
$(R_\nilp)^n \subset U$.  
An $R$-valued point on $\Spf (\C[\![Q]\!][\![\by,\by^+]\!])$ 
is given by a collection $\{\log y_1,\dots,\log y_m\} 
\cup \{y_{k}: k\in G\} \cup \{ y_{n,k} : k\in \bN \cap |\Sigma|, n=1,2,\dots \}$ 
of elements of $R_\nilp$ such that every neighbourhood $U\subset R$ of $0$ 
contains all but finitely many elements of this collection. 
An $R$-valued point on the formal group $J\cG$ is described similarly. 
For an $R$-valued point $(\by,\by^+)$, we can make sense of 
$\tau(\by) \in H^*_T(X_\Sigma)\hotimes_{\C} R$ and 
$\Upsilon(\by,\by^+,z) \in H^*_T(X_\Sigma)[z]\hotimes_{\C} R$ 
where $\hotimes$ denotes the completed tensor product. 
Consider the map  
\[
(\by,\by^+) \mapsto (\sigma(\by), \Xi(\by,\by^+,z)) 
\in H^*(X_\Sigma)[\![Q]\!] \times H^*(X_\Sigma)[z][\![Q]\!]
\]   
defined as the non-equivariant limit of 
$(\tau(\by), \Upsilon(\by,\by^+,z))$. 
We will see that this map classifies the $J\cG$-orbit 
on the space $\Spf (\C[\![Q]\!][\![\by,\by^+]\!])$ 
of oscillatory forms. 
\begin{theorem} 
\label{thm:orbit_space}
Let $R$ be as above, and 
let $(\by_1,\by^+_1)$, $(\by_2,\by^+_2)$ be two $R$-valued points 
of $\Spf \C[\![Q]\!][\![\by,\by^+]\!]$. 
Then the following are equivalent:  
\begin{enumerate} 
\item there exists an $R$-valued point $g\in J\cG(R)$ of $J\cG$ 
such that $g \cdot [e^{F(x;\by_1)/z} \omega(\by_1^+)] 
= [e^{F(x;\by_2)/z} \omega(\by_2^+)]$; 
\item one has $\sigma(\by_1) = \sigma(\by_2)$ and 
$\Xi(\by_1,\by_1^+,z) = \Xi(\by_2,\by_2^+,z)$. 
\end{enumerate} 
\end{theorem}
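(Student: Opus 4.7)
The direction $(1)\Rightarrow(2)$ is immediate from Lemma \ref{lem:reparametrization}. The formal group $J\cG$ acts on $\Spf\C[\![Q]\!][\![\by,\by^+]\!]$ through the vector fields $\tW_{k,i,n}$; by Lemma \ref{lem:reparametrization}, these annihilate $\tau(\by)$ and $\Upsilon(\by,\by^+,z)$ modulo the ideal $(\lambda_1,\dots,\lambda_D)$, so in the non-equivariant limit $\lambda\to 0$ they annihilate $\sigma(\by)$ and $\Xi(\by,\by^+,z)$. Consequently any $J\cG$-flow preserves these non-equivariant quantities, yielding $\sigma(\by_1)=\sigma(\by_2)$ and $\Xi(\by_1,\by_1^+,z)=\Xi(\by_2,\by_2^+,z)$.

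For $(2)\Rightarrow(1)$, I would transport the problem to the equivariant side via Lemma \ref{lem:very_big_mirrormap}, which identifies $\Spf\C[\![Q]\!][\![\by,\by^+]\!]$ with the formal neighbourhood $\hat{H}$ of $(0,1)$ in $H^*_T(X_\Sigma)[\![Q]\!]\times H^*_\hT(X_\Sigma)[\![Q]\!]$; the non-equivariant projection $\pi\colon \hat{H}\to Z$ is the quotient by the ideal $(\lambda_1,\dots,\lambda_D)$. Consider the action morphism of formal schemes
\[
\mu\colon J\cG \times \hat{H} \longrightarrow \hat{H} \times_Z \hat{H}, \qquad (g,y)\mapsto (y,g\cdot y),
\]
which is well-defined by $(1)\Rightarrow(2)$. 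The plan is to prove $\mu$ is an isomorphism of formal schemes by applying the formal inverse function theorem (Theorem \ref{thm:formal_inverse_function}), since then surjectivity on $R$-points produces the required $g\in J\cG(R)$. The computation of the differential at the identity point reduces to showing that the vectors $\lambda_i(S_k(\tau(\by))\delta_{n,0},\,[z^{n-1}\bS_k(\tau(\by))]_+\Upsilon(\by,\by^+,z))$ indexed by $(k,i,n)$ form a topological basis of the relative tangent space $T(\hat{H}/Z)=\bigoplus_{i=1}^D \lambda_i\cdot T\hat{H}$. This follows from the fact that $\{S_k(\tau)\}_k$ is a $\C[\![Q]\!][\![\bt]\!]$-basis of $H^*_T(X_\Sigma)[\![Q]\!][\![\bt]\!]$ (Remark \ref{rem:inverse_mirror}) together with the corresponding topological basis for the $\Upsilon$-tangent space that is essentially established in the proof of Lemma \ref{lem:very_big_mirrormap} (the formulas \eqref{eq:tau_Jacobi} and \eqref{eq:Upsilon_Jacobi} give bases at the base point and then one propagates by invertibility of the shift operators).

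The main obstacle will be verifying the hypotheses of the formal inverse function theorem in the infinite-dimensional, non-Noetherian admissible-ring setting used throughout the paper. In particular, one must confirm that $\hat{H}\times_Z\hat{H}$ is a formal scheme of the expected type (i.e.~the completed tensor product behaves as expected), and that the differential of $\mu$ is an isomorphism of \emph{topological} modules, not merely of bare modules. A secondary subtlety is disentangling the $n=0$ and $n\ge 1$ contributions in the differential, since $\tW_{k,i,0}$ acts on both $\tau$ and $\Upsilon$ whereas $\tW_{k,i,n}$ for $n\ge 1$ acts only on $\Upsilon$; the lower-triangular shape of this coupling (in powers of $z$) makes the system solvable for the $\epsilon_{k,i,n}$ in terms of any prescribed $\lambda$-multiple of a tangent vector.
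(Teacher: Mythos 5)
Your direction $(1)\Rightarrow(2)$ coincides with the paper's argument. The gap is in $(2)\Rightarrow(1)$: the vectors $\lambda_i\bigl(\delta_{n,0}S_k(\tau(\by)),[z^{n-1}\bS_k(\tau(\by))]_+\Upsilon(\by,\by^+,z)\bigr)$ indexed by $(k,i,n)$ do \emph{not} form a topological basis of the relative tangent space; they only topologically generate it. The kernel of the non-equivariant limit $H^*_T(X_\Sigma)\times H^*_\hT(X_\Sigma)\to H^*(X_\Sigma)\times H^*(X_\Sigma)[z]$ is the ideal-theoretic sum $\sum_{i=1}^D\lambda_i\cdot(\cdots)$, which is not a direct sum once $D\ge 2$ (the element $\lambda_1\lambda_2\alpha$ lies in both $\lambda_1(\cdots)$ and $\lambda_2(\cdots)$), so your identification $T(\hat{H}/Z)=\bigoplus_{i=1}^D\lambda_i\cdot T\hat{H}$ is incorrect and the generators satisfy nontrivial relations. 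Concretely, at $\by=\ybase$, $\by^+=0$, $Q=0$ one has $\tW_{b_j,i,0}(\tau,\Upsilon)=(\lambda_i u_j,0)$ (the $\Upsilon$-component vanishes there since $[z^{-1}\bS_{b_j}(0)]_+1=0$), and since $\lambda_i=\sum_j(\chi_i\cdot b_j)u_j$ by \eqref{eq:equiv_parameter}, the combination $\sum_j(\chi_2\cdot b_j)\tW_{b_j,1,0}-\sum_j(\chi_1\cdot b_j)\tW_{b_j,2,0}$ is annihilated by the differential of your action map $\mu$ (its $\tau$-component is $\lambda_1\lambda_2-\lambda_2\lambda_1=0$). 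Hence $d\mu$ has nontrivial kernel, $\mu$ is not an isomorphism (the $J\cG$-action has nontrivial infinitesimal stabilizers), and Theorem \ref{thm:formal_inverse_function} --- the only tool of this kind available in the paper, and one that only produces isomorphisms --- cannot be applied. Surjectivity of $d\mu$ alone does not give surjectivity of $\mu$ on $R$-points without a formal submersion theorem that you would have to supply.

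The paper circumvents this by using only the spanning property, made global and linear: it expands the standard basis $v_{l,p}$ of $H^*_T(X_\Sigma)\times H^*_T(X_\Sigma)[z]$ in the generators to obtain vector fields $\frX_{l,p,i}=\sum_{k,n}c_{k,n,l,p}\tW_{k,i,n}$ satisfying $\frX_{l,p,i}(\tau,\Upsilon)=\lambda_i v_{l,p}$ \emph{identically} in $(\by,\by^+)$, chooses a (non-unique, as the paper explicitly notes) expression of the difference $(\tau(\by_2),\Upsilon(\by_2,\by_2^+,z))-(\tau(\by_1),\Upsilon(\by_1,\by_1^+,z))$ as $\sum_{l,p,i} r_{l,p,i}\lambda_i v_{l,p}$ with $r_{l,p,i}\in R_\nilp$, and integrates $\sum r_{l,p,i}\frX_{l,p,i}$ to time one via Theorem \ref{thm:formal_flow}. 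Because the derivative of $(\tau,\Upsilon)$ along this flow is the \emph{constant} vector $\sum r_{l,p,i}\lambda_i v_{l,p}$, the time-one flow carries $(\by_1,\by_1^+)$ to a point with the same $(\tau,\Upsilon)$-value as $(\by_2,\by_2^+)$, hence to $(\by_2,\by_2^+)$ itself by the injectivity in Lemma \ref{lem:very_big_mirrormap}; and since the vector field is a constant-coefficient combination of the fundamental vector fields, the flow is realized by an element of $J\cG(R)$. To salvage your approach you would have to replace the inverse-function-theorem step by such an explicit lift-and-integrate argument.
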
 
\begin{proof} 
(1) $\Rightarrow$ (2): Since the exponential map identifies 
the formal neighbourhood of the origin of $T_e(J\cG)$ with $J\cG$, 
we can work at the Lie algebra level. 
Lemma \ref{lem:reparametrization} implies that the generators 
$\tW_{k,i,n}$ of $T_e(J\cG)$ act on $\sigma(\by)$, $\Xi(\by,\by^+,z)$ 
trivially. Thus $\sigma$ and $\Xi$ are constant along a $J\cG$-orbit. 

(2) $\Rightarrow$ (1): 
It follows from \eqref{eq:tau_Jacobi}, \eqref{eq:Upsilon_Jacobi} that 
\[
\left. 
\left(\delta_{n,0} S_k(\tau(\by)), 
[z^{n-1}\bS_k(\tau(\by))]_+\Upsilon(\by,\by^+,z) \right)
\right|_{\by= \ybase,\by^+=0, Q=0} 
\]
form a $\C$-basis of $H^*_T(X_\Sigma)\times H^*_T(X_\Sigma)[z]$. 
Define a $\C$-basis $\{ v_{l,p} : l\in \bN \cap |\Sigma|, p=0,1,2,\dots\}$ 
of $H^*_T(X_\Sigma) \times H^*_T(X_\Sigma)[z]$ by 
\[
v_{l,p} =
\begin{cases} 
(\phi_l,0) & p = 0; \\ 
(0, z^{p-1} \phi_l) & p>0.  
\end{cases} 
\] 
Then we can write 
\begin{align*} 
v_{l,p} = 
\sum_{k\in \bN \cap |\Sigma|} 
\sum_{n=0}^\infty 
c_{k,n,l,p} 
\left(\delta_{n,0} S_k(\tau(\by)), [z^{n-1}\bS_k(\tau(\by))]_+ 
\Upsilon(\by,\by^+,z) \right) 
\end{align*} 
for some (unique) coefficients $c_{k,n,l,p} 
\in \C[\![Q]\!][\![\by,\by^+]\!]$ 
such that $\lim_{|k|+n\to \infty} c_{k,n,l,p} =0$. 
Define the vector fields $\frX_{l,p,i} = 
\sum_{k\in \bN \cap |\Sigma|} \sum_{n=0}^\infty 
c_{k,n,l,p} \tW_{k,i,n}$. 
Then Lemma \ref{lem:reparametrization} implies: 
\[
\frX_{l,p,i} \left(\tau(\by), \Upsilon(\by,\by^+,z)\right) 
= \lambda_i v_{l,p}. 
\]
When we assume (2), we can find $r_{l,p,i} \in R_\nilp$ such that 
$\lim_{|l|+p \to \infty} r_{l,p,i} =0$ and  
\[
\left(\tau(\by_2), \Upsilon(\by_2,\by_2^+,z)\right) - 
\left(\tau(\by_1),\Upsilon(\by_1,\by_1^+,z)\right) 
= \sum_{l\in \bN \cap |\Sigma|} \sum_{p=0}^\infty 
\sum_{i=1}^D r_{l,p,i} \lambda_i v_{l,p}. 
\]
Such $r_{l,p,i}$ are not unique. They 
define an ``$R$-dependent'' vector field $\sum_{l,p,i} r_{l,p,i} \frX_{l,p,i}$  
on the formal scheme $\cM_R :=\Spf(\C[\![Q]\!][\![\by,\by^+]\!]) 
\times_{\Spf(\C[\![Q]\!])} \Spf(R) = \Spf(R[\![\by,\by^+]\!])$ over $R$. 
Since $r_{l,p,i}$ are topologically nilpotent, we can integrate 
this vector field to obtain an automorphism 
of $\cM_R$ over $R$ (see Theorem \ref{thm:formal_flow}). 
By construction, this automorphism 
sends the $R$-valued point $(\by_1,\by_1^+)$ to 
$(\by_2,\by_2^+)$. By integrating the corresponding Lie algebra 
element in the formal group $(J\cG)_R = J\cG \times_{\Spf \C[\![Q]\!]} \Spf(R)$ 
over $R$, 
we obtain an element $g\in J\cG(R)$ which sends 
$e^{F(x;\by_1)/z} \omega(\by_1^+)$ to 
$e^{F(x;\by_2)/z} \omega(\by_2^+)$.  
\end{proof} 

To obtain a primitive form in the non-equivariant setting, 
we need to choose a formal section 
$\frs \colon H^*(X_\Sigma)[\![Q]\!]  
\to H^*_T(X_\Sigma)[\![Q]\!]$ 
as in \S\ref{subsec:noneq_mirror_isom} 
\emph{and} a formal function $\frf \colon H^*(X_\Sigma)[\![Q]\!] 
\to H^*_\hT(X_\Sigma)[\![Q]\!]$ 
that is expanded as 
\[
\frf(\sigma) = \sum_{k\in \bN \cap |\Sigma|} \sum_{n=0}^\infty 
\frf_{k,n}(\sigma) z^n \phi_k 
\]
with $\frf_{k,n}(\sigma)\in \C[\![Q]\!][\![s_0,\dots,s_N]\!]$ 
such that $\frf_{k,n}(0)|_{Q=0} = 
\delta_{k,0} \delta_{n,0}$, $\lim_{|k| + n \to \infty} 
\frf_{k,n}(\sigma) = 0$ in the topology of $\C[\![Q]\!][\![s_0,\dots,s_N]\!]$ 
and that the image of $\frf(\sigma)$ under the natural map 
$H^*_\hT(X_\Sigma)[\![Q]\!] \to H^*(X_\Sigma)[z][\![Q]\!]$ is $1$. 
By the isomorphism in Lemma \ref{lem:very_big_mirrormap}, 
we obtain a Landau-Ginzburg potential $\frs^* F=
F(x;\by(\sigma))$ and 
a primitive form $\zeta_{(\frs,\frf)}=\omega(\by^+(\sigma))$ 
such that 
\[
\frs(\sigma)= \tau(\by(\sigma)), 
\qquad 
\frf(\sigma) = \Upsilon(\by(\sigma),\by^+(\sigma),z). 
\] 
The cohomology class $[\zeta_{(\frs,\frf)}]$ maps to $1$ under 
the isomorphism $\frs^* \Theta_{\rm noneq} \colon \GM(\frs^*F) 
\cong H^*(X_\Sigma)[\![Q]\!][\![s_0,\dots,s_N]\!]$. 
\begin{corollary} 
\label{cor:noneq_primitive}
Any oscillatory primitive forms $\exp(\frs^*F/z)\zeta_{(\frs,\frf)}$ associated to 
various data $(\frs,\frf)$ as above are related to each other by a co-ordinate 
change in the $x$-variables, i.e.~they are contained in a single 
$J\cG(\C[\![Q]\!][\![s_0,\dots,s_N]\!])$-orbit. 
\end{corollary}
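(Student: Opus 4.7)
The plan is to reduce the statement to Theorem \ref{thm:orbit_space} applied with the coefficient ring $R = \C[\![Q]\!][\![s_0,\ldots,s_N]\!]$ equipped with its natural topology from \S\ref{subsec:power_series}; then $R_\nilp$ contains $s_0,\ldots,s_N$ together with $Q^d$ for $d\in \Eff(X_\Sigma)\setminus\{0\}$, and $R$ satisfies the completeness and neighbourhood hypotheses required in Theorem \ref{thm:orbit_space}.

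First I would unpack the data. By Lemma \ref{lem:very_big_mirrormap}, each admissible pair $(\frs_i,\frf_i)$ ($i=1,2$) corresponds to a unique formal morphism $\sigma \mapsto (\by_i(\sigma),\by_i^+(\sigma))$ characterized by $\tau(\by_i(\sigma)) = \frs_i(\sigma)$ and $\Upsilon(\by_i(\sigma),\by_i^+(\sigma),z) = \frf_i(\sigma)$; this furnishes an $R$-valued point of $\Spf(\C[\![Q]\!][\![\by,\by^+]\!])$, and the oscillatory primitive form associated to $(\frs_i,\frf_i)$ is exactly $\exp(F(x;\by_i(\sigma))/z)\,\omega(\by_i^+(\sigma))$.

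Next I would verify hypothesis (2) of Theorem \ref{thm:orbit_space}. Taking the non-equivariant limit $\lambda\to 0$ of the identities $\tau(\by_i(\sigma)) = \frs_i(\sigma)$ and $\Upsilon(\by_i(\sigma),\by_i^+(\sigma),z) = \frf_i(\sigma)$, and using that $(\frs_i,\frf_i)$ lifts $(\sigma,1)$, one obtains
\[
\sigma(\by_1(\sigma)) = \sigma = \sigma(\by_2(\sigma)), \qquad \Xi(\by_1(\sigma),\by_1^+(\sigma),z) = 1 = \Xi(\by_2(\sigma),\by_2^+(\sigma),z).
\]
Theorem \ref{thm:orbit_space} then supplies an element $g\in J\cG(R)$ sending $\exp(F(x;\by_1(\sigma))/z)\,\omega(\by_1^+(\sigma))$ to $\exp(F(x;\by_2(\sigma))/z)\,\omega(\by_2^+(\sigma))$, which is the desired conclusion.

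I do not expect a substantive obstacle beyond bookkeeping; morally, the corollary just says that $(\frs,\frf)$ is a repackaging of the non-equivariant data $(\sigma,1)$, which Theorem \ref{thm:orbit_space} has already identified as the complete invariant of a $J\cG$-orbit. The only technical point requiring care is confirming that $(\by_i(\sigma),\by_i^+(\sigma))$ genuinely defines an $R$-valued point in the sense preceding Theorem \ref{thm:orbit_space}, i.e.~that each coordinate lies in $R_\nilp$ and that only finitely many coordinates fall outside any given neighbourhood of $0$. Both follow from the normalization conditions $\frs_i(0)|_{Q=0}=0$ and $\frf_i(0)|_{Q=0}=1$ (so the morphism lands in the formal neighbourhood of the base point $(\ybase,0)$) combined with the decay conditions $\lim_{|k|\to\infty}\frs_k(\sigma) = 0$ and $\lim_{|k|+n\to\infty}\frf_{k,n}(\sigma) = 0$ imposed in \S\ref{subsec:noneq_mirror_isom}.
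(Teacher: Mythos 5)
Your proposal is correct and matches the paper's (implicit) argument: the corollary is deduced exactly by applying Theorem \ref{thm:orbit_space} with $R=\C[\![Q]\!][\![s_0,\dots,s_N]\!]$ to the $R$-valued points supplied by Lemma \ref{lem:very_big_mirrormap}, noting that both pairs have non-equivariant limit $(\sigma,1)$. Your verification that the points land in the formal neighbourhood of $(\ybase,0)$ is the right technical check and poses no obstacle.
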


\section{Extended $I$-function}
\label{sec:I-function} 

In this section we relate the mirror map $\tau(\by)$, the function 
$\Upsilon(\by,z)$ (or $\Upsilon(\by,\by^+,z)$) and the primitive form 
$\zeta$ 
with certain hypergeometric series called the (extended) $I$-function. 
This gives us a concrete algorithm to calculate these quantities, 
although actual computations could be very complicated. 

\begin{definition}[\cite{CCIT:mirrorthm}]
Define a cohomology-valued hypergeometric series in the 
variables $\by = \{y_k : k\in \bN \cap |\Sigma|\}$ as follows: 
\[
I(\by,z) = z e^{\sum_{i=1}^m u_i \log y_i/z} 
\sum_{\ell \in \hLL_\eff} \by^\ell Q^{d(\ell)}
\left( \prod_{i=1}^m 
\frac{\prod_{c=-\infty}^0 (u_i+ cz)}
{\prod_{c=-\infty}^{\ell_{b_i}}(u_i+cz)} \right) 
\frac{1}{\prod_{k \in G} \ell_k! z^{\ell_k}} 
\]
where we used the notation from \S \ref{subsec:mirrormap}. 
This belongs to $H_\hT^*(X_\Sigma)_\loc[\![Q]\!][\![\by]\!]$ 
and is called \emph{the extended $I$-function}. 
\end{definition} 

Recall from Remark \ref{rem:Givental_cone} that the image 
of the fundamental solution $M(\tau,z)$ sweeps the 
Givental cone in $H^*_\hT(X_\Sigma)_\loc$. 
We show that the extended $I$-function is on the 
Givental cone \cite{Givental:toric_mirrorthm, CCIT:mirrorthm}. 

\begin{proposition} 
\label{prop:ext_I}
Let $\tau(\by)$, $\Upsilon(\by,z)$ denote the functions 
from Proposition \ref{prop:mirror_map}.  
We have $I(\by,z) = z M(\tau(\by),z) \Upsilon(\by,z)$. 
\end{proposition}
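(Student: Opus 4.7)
The plan is to show $I = J$, where $J(\by,z) := zM(\tau(\by),z)\Upsilon(\by,z)$, by verifying that both sides satisfy the same commuting first-order ODE system
\[
\parfrac{f}{y_k} = z^{-1}\cS_k f \qquad \text{for all } k \in \bN\cap|\Sigma|
\]
in $H^*_\hT(X_\Sigma)_\loc[\![Q]\!][\![\by]\!]$, and then matching them at the base point $\ybase$. Here $\partial/\partial y_k$ treats each $y_k$ as an ordinary variable (with $\log y_i$ the underlying formal parameter for $i=1,\dots,m$). The commutativity $[\cS_k,\cS_l] = 0$ from Proposition \ref{prop:shift}(3) makes the system well-posed, so its solution is uniquely determined by its value at $\ybase$.

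The ODE for $J$ is a direct consequence of Propositions \ref{prop:mirror_map} and \ref{prop:shift}(1). Combining the Seidel flows $\parfrac{\tau}{y_k} = S_k(\tau)$ and $\parfrac{\Upsilon}{y_k} = [z^{-1}\bS_k(\tau)]_+\Upsilon$, the defining property $\partial_\alpha M(\tau,z) = z^{-1}M(\tau,z)(\alpha\star_\tau)$ of the fundamental solution, and the intertwining $M(\tau,z)\bS_k(\tau) = \cS_k M(\tau,z)$ from Proposition \ref{prop:shift}(1), a direct calculation gives
\[
\parfrac{J}{y_k} = z\left[z^{-1}M(S_k\star_\tau\Upsilon) + M\bigl([z^{-1}\bS_k(\tau)]_+\Upsilon\bigr)\right] = M\bS_k(\tau)\Upsilon = z^{-1}\cS_k J.
\]

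The main technical step is to verify the analogous ODE $\parfrac{I}{y_k} = z^{-1}\cS_k I$ for the hypergeometric $I$-function. I would proceed by localization: restrict both sides to each $T$-fixed point $F_{I_0}$, where $I_0 \subset \{1,\dots,m\}$ indexes a maximal cone of $\Sigma$, so $\{b_j\}_{j \in I_0}$ is a $\Z$-basis of $\bN$ and $u_j|_{F_{I_0}} = 0$ for $j \notin I_0$. Only those $\ell \in \hLL_\eff$ with $\ell_{b_j} \ge 0$ for $j \notin I_0$ survive at $F_{I_0}$, since otherwise the hypergeometric factor $\prod_{c=-\infty}^0(cz)/\prod_{c=-\infty}^{\ell_{b_j}}(cz)$ at $u_j|_{F_{I_0}}=0$ vanishes. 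By Definition \ref{def:shift_Givental}, $\cS_k$ acts at $F_{I_0}$ as multiplication by the explicit factor
\[
\Delta_{F_{I_0}}(k) = Q^{\sigma_{F_{I_0}}(k) - \sigma_{\min}(k)} \prod_{j \in I_0}\frac{\prod_{c=-\infty}^0 (u_j|_{F_{I_0}} + cz)}{\prod_{c=-\infty}^{-u_j|_{F_{I_0}}\cdot k}(u_j|_{F_{I_0}} + cz)}
\]
composed with the equivariant-parameter shift $u_j|_{F_{I_0}} \mapsto u_j|_{F_{I_0}} - (u_j|_{F_{I_0}}\cdot k)z$. The identity then reduces to a combinatorial matching of the hypergeometric sum at $F_{I_0}$: the $\hLL$-preserving index shift $\ell \mapsto \ell - e_k + \sum_i \Psi_i(k) e_{b_i}$ induced by $\parfrac{}{y_k}$ corresponds exactly to the combined effect of the parameter shift and the $\Delta_{F_{I_0}}(k)$ factor, with the $Q$-factor $Q^{\sigma_{F_{I_0}}(k) - \sigma_{\min}(k)}$ tracked through Lemma \ref{lem:minimal_section} ($\sigma_{\min}(k) = -\Psi(k)$) and Corollary \ref{cor:d(k,l)_toric} ($d(k,l) = \Psi(k)+\Psi(l)-\Psi(k+l)$).

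To conclude, I match $I(\ybase) = J(\ybase)$ as elements of $H^*_\hT(X_\Sigma)_\loc[\![Q]\!]$. At $\ybase$, $I(\ybase)$ is the classical Givental $I$-function for small quantum cohomology and $J(\ybase) = zM(\tau(\ybase),z)\Upsilon(\ybase,z)$ is the small $J$-function composed with the small $\Upsilon$; their equality is Givental's mirror theorem for toric manifolds \cite{Givental:toric_mirrorthm}. The main obstacle is the combinatorial identity underlying the ODE verification for $I$: aligning the $\hLL$-preserving summation-index shift with the $\Delta$-factor and equivariant-parameter shift in the localization formula, particularly for $k \in G$, where the compensating adjustments to $\ell_{b_i}$ depend on a choice of maximal cone containing $k$, requires careful bookkeeping of the $Q$-powers.
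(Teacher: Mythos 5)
Your overall strategy coincides with the paper's: both sides are shown to satisfy the linear system $\parfrac{\bbf}{y_k}=z^{-1}\cS_k\bbf$ (for $J=zM\Upsilon$ this follows from Propositions \ref{prop:mirror_map} and \ref{prop:shift}(1) exactly as you compute, and for $I$ by restriction to torus-fixed points and an index shift preserving $\hLL$), and the two solutions are then matched on an initial locus. The genuine gap is in your initial-value step. The first-order system in $\by$ determines a solution only once its value at $\ybase$ is known as an element of $H^*_\hT(X_\Sigma)_\loc[\![Q]\!]$, i.e.\ to all orders in $Q$; this is precisely the small-parameter mirror identity $I(\ybase)=zM(\tau(\ybase),z)\Upsilon(\ybase,z)$, which is not a triviality. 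You justify it by citing Givental's mirror theorem \cite{Givental:toric_mirrorthm}, but that theorem is stated for compact semipositive toric manifolds, whereas the present setting allows $X_\Sigma$ to be non-compact and non-semipositive; moreover the paper's explicit aim is to \emph{derive} the mirror theorem from shift operators rather than assume it. The paper closes this gap differently: it invokes the shift-operator proof of the identity along the whole locus $\{y_k=0:\ k\in G\}$ from \cite[\S 4.3]{Iritani:shift} (valid for general semi-projective toric manifolds), so that only the differential equations in the directions $k\in G$ need to be checked. To repair your argument you should either cite that result for the base case, or supplement the ODE argument with the divisor-equation/homogeneity structure of both sides (so that the $Q$-dependence at $\ybase$ is forced by the flows in $\log y_1,\dots,\log y_m$ from the trivial value at $Q=0$); as written, the appeal to \cite{Givental:toric_mirrorthm} does not cover the required generality.

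Two smaller points. First, in the localization step the index shift that matches the factor $\Delta_x(k)$ at a fixed point $x$ is $\ell\mapsto \ell+\sum_{i=1}^m(u_i(x)\cdot k)e_{b_i}-e_k$, with the \emph{local} weights $u_i(x)\cdot k$ appearing in the product limits of $\Delta_x(k)$, rather than the cone-dependent exponents $\Psi_i(k)$ you propose; the two shifts differ by an element of $\LL$, and only the former aligns term-by-term with $\Delta_x(k)e^{-kz\partial_\lambda}$ without further $Q$-power corrections. Second, your observation that only $\ell$ with $\ell_{b_j}\ge 0$ for $j\notin I_0$ survive at $F_{I_0}$ is correct but is not needed once the reindexing is set up over all of $\hLL$ (the paper's device of replacing $\ell_l!$ by $\Gamma(1+\ell_l)$ so that out-of-range summands vanish automatically).
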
 
\begin{proof} 
This proposition was proved in \cite[\S 4.3]{Iritani:shift} 
along the locus $\{y_k=0 : k\in G\}$. It suffices to show 
that both $I(\by,z)$ and $zM(\tau(\by),z)\Upsilon(\by,z)$ satisfy 
the same differential equation in $y_k$ for $k\in G$. We claim 
that both functions satisfy: 
\[
\parfrac{\bbf(\by,z)}{y_k} = z^{-1} \cS_k \bbf(\by,z).  
\]
Since $\bV_k$ corresponds to the linear vector field 
$\bbf \mapsto z^{-1} \cS_k \bbf$ on the Givental space $H_\hT(X_\Sigma)_\loc$ 
(see Remark \ref{rem:Givental_cone}, \cite[\S 4.3]{Iritani:shift}), 
the differential equation holds for $\bbf = z M(\tau(\by),z) \Upsilon(\by,z)$. 
We show that the differential equation holds for $\bbf = I(\by,z)$. 
Let $x\in X_\Sigma$ be a $T$-fixed point. 
Let $I_x(\by,z)$ denote the restriction of $I(\by,z)$ to $x$. 
By Definition \ref{def:shift_Givental}, we need to show that: 
\begin{equation}
\label{eq:diffeq_I_x} 
z \parfrac{}{y_k} I_x(\by,z) = \Delta_x(k) e^{-k z \partial_\lambda} 
I_x(\by,z)  
\end{equation} 
with 
\[
\Delta_x(k) = Q^{\sigma_x - \sigma_{\min}(k)} 
\prod_{i=1}^m \frac{\prod_{c=-\infty}^0 (u_i(x) +cz)}
{\prod_{c=-\infty}^{-u_i(x) \cdot k} (u_i(x) + c z)} 
\]
where $u_i(x) \in H^2_T(\pt)$ denotes the restriction of 
$u_i$ to $x$. Recall that $\sigma_{\min}(k)\in H_2^\sec(E_k)$ 
corresponds to $-\Psi(k) \in \Z^m \cong H_2^T(X_\Sigma,\Z)$ 
by Lemma \ref{lem:minimal_section}. A similar argument 
shows that the section class $\sigma_x$ associated to the fixed point $x$ 
corresponds to $(-u_i(x)\cdot k)_{i=1}^m$.   
Therefore the right-hand side of \eqref{eq:diffeq_I_x} equals: 
\begin{multline*} 
Q^{\Psi(k) - \sum_{i=1}^m (u_i(x) \cdot k)e_i}  
\prod_{i=1}^m \frac{\prod_{c=-\infty}^0 (u_i(x) +cz)}
{\prod_{c=-\infty}^{-u_i(x) \cdot k} (u_i(x) + c z)} 
z e^{\sum_{i=1}^m (u_i(x) \log y_i/z - (u_i(x) \cdot k) \log y_i)} \\ 
\times 
\sum_{\ell\in \hLL_\eff} Q^{d(\ell)} \by^\ell 
\left( \prod_{i=1}^m 
\frac{\prod_{c=-\infty}^{-u_i(x) \cdot k} (u_i(x)+ cz)}
{\prod_{c=-\infty}^{\ell_{b_i}-u_i(x) \cdot k}(u_i(x)+cz)} \right) 
\frac{1}{\prod_{l\in G} \ell_l! z^{\ell_l}} 
\end{multline*} 
Note that we can write the extended $I$-function as a sum 
over $\ell \in \hLL$ (by replacing $\ell_l!$ with $\Gamma(1+\ell_l)$) 
since the summand corresponding 
to $\ell \notin \hLL_\eff$ automatically vanishes. 
Shifting the index $\ell$ as $\ell \to \ell + \sum_{i=1}^m 
(u_i(x) \cdot k) e_{b_i} - e_k$, we find that this equals 
the left-hand side of \eqref{eq:diffeq_I_x}. 
\end{proof} 

We explain that the functions $(\tau(\by), \Upsilon(\by,z))$ are obtained 
from $I(\by,z)$ via the Birkhoff factorization \cite{Coates-Givental}. 
Consider the $\C[z][\![Q]\!][\![\by]\!]$-linear map 
\[
dI \colon 
H_T^*(X_\Sigma)[z] [\![Q]\!][\![\by]\!] \to 
H_T^*(X_\Sigma)(\!(z^{-1})\!)[\![Q]\!][\![\by]\!]
\]
sending $\phi_k$ to $\parfrac{I(\by,z)}{y_k}$ for 
$k\in \bN \cap |\Sigma|$. 
Here we used the embedding $H_\hT^*(X_\Sigma)_\loc \hookrightarrow 
H_T^*(X_\Sigma)(\!(z^{-1})\!)$ given by the Laurent expansion 
at $z=\infty$. 
We have 
\begin{align*} 
\parfrac{I(\by,z)}{y_k} & = M(\tau(\by),z) \left( 
z \parfrac{\Upsilon(\by,z)}{y_k} + 
\parfrac{\tau(\by)}{y_k} \star_{\tau(\by)} \Upsilon(\by,z) 
\right) \\ 
& = M(\tau(\by),z) \bS_k(\tau(\by))  \Upsilon(\by,z). 
\end{align*} 
Let $\bS\Upsilon 
\colon H_T^*(X_\Sigma)[z][\![Q]\!][\![\by]\!] 
\to H_T^*(X_\Sigma)[z][\![Q]\!][\![\by]\!]$ 
denote 
the $\C[z][\![Q]\!][\![\by]\!]$-linear map 
sending $\phi_k$ to $\bS_k(\tau(\by)) \Upsilon(\by,z)$ 
for each $k\in \bN \cap |\Sigma|$. 
Then we have: 
\[
d I = M(\tau(\by),z) \circ \bS \Upsilon. 
\]
This can be viewed as the Birkhoff factorization of $dI$ 
when we regard $z$ as a loop parameter; 
notice that $M(\tau(\by),z)$ belongs to $\End_\C(H_T^*(X_\Sigma))[\![z^{-1}]\!]
[\![Q]\!][\![\by]\!]$ and that $M(\tau(\by),z=\infty) = \id$. 
The Birkhoff factorization can be performed recursively 
in powers in $Q$ and $\by$, and this gives a concrete 
algorithm to compute $M(\tau(\by),z)$ and $\Upsilon(\by)$. 
The mirror map $\tau(\by)$ is then obtained from the expansion: 
\begin{equation} 
\label{eq:J_asymptotics} 
M(\tau(\by),z) 1 = 1 + \frac{\tau(\by)}{z} + o(z^{-1}). 
\end{equation} 
Once we obtain $\tau(\by)$ and $\bS\Upsilon$, 
we can calculate the inverse mirror map $\by = \by(\bt)$ and 
the primitive form $\zeta = \sum_{k\in \bN\cap|\Sigma|} 
c_k(\by,z) w^{\Psi(k)} \omega$ by the requirement (see \S \ref{subsec:primitive_form})
\[
\sum_{k\in \bN\cap|\Sigma|} 
c_k(\by,z) \bS_k(\tau(\by)) \Upsilon(\by,z) = 1. 
\]

Finally we extend Proposition \ref{prop:ext_I} to the function $\Upsilon(\by,\by^+,z)$ 
in Theorem \ref{thm:y+} and describe an alternative method to 
calculate the primitive form. 
Let $\by^+ = \{y_{k,n} : k\in \bN \cap |\Sigma|, n=1,2,3,\dots\}$ be the 
variables in \S \ref{subsec:primitive_form} and consider 
\[
y_k(z) = y_k + \sum_{n=1}^\infty y_{k,n} z^n. 
\]
We write $\by(z) = \{ y_k(z) : k\in \bN\cap |\Sigma|\}$. 
\begin{proposition} 
\label{prop:verybig_I}
Let $\Upsilon(\by,\by^+,z)$ be as in Theorem \ref{thm:y+}. 
We have 
\[
I(\by(z),z) = z M(\tau(\by),z) \Upsilon(\by,\by^+,z) 
\]
where $I(\by(z),z)$ is obtained from the extended $I$-function 
$I(\by,z)$ by replacing $y_k$ with $y_k(z)$. 
\end{proposition}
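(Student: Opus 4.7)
The plan is to adapt the strategy of Proposition \ref{prop:ext_I}: show that both sides of the claimed equality, viewed as elements of $H^*_\hT(X_\Sigma)_\loc[\![Q]\!][\![\by,\by^+]\!]$, satisfy the same first-order linear PDE system in the variables $\by^+$, agree along the locus $\{\by^+=0\}$, and conclude by uniqueness of formal power series solutions.

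For the right-hand side, set $\psi(\by,\by^+,z):=zM(\tau(\by),z)\Upsilon(\by,\by^+,z)$. Since $M(\tau,z)$ intertwines $\partial_\alpha$ with $\nabla_\alpha=\partial_\alpha+z^{-1}(\alpha\star_\tau)$, one deduces $\partial_\alpha M(\tau,z)=z^{-1}M(\tau,z)(\alpha\star_\tau\cdot)$ as operators on $H^*_\hT(X_\Sigma)_\loc$. Combining this with (i) the mirror map equation $\partial_{y_k}\tau(\by)=S_k(\tau(\by))$ from Proposition \ref{prop:mirror_map}, (ii) the flow equation \eqref{eq:Upsilon_y+flow} for $\Upsilon(\by,\by^+,z)$, and (iii) the intertwining $M\circ\bS_k=\cS_k\circ M$ from Proposition \ref{prop:shift}(1), a short computation yields
\begin{equation*}
\parfrac{\psi}{y_{k,n}} = z^{n-1}\cS_k\,\psi\qquad\text{for all $k\in\bN\cap|\Sigma|$ and $n\ge 0$,}
\end{equation*}
where $y_{k,0}:=y_k$. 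The correction $-\delta_{n,0}z^{-1}S_k(\tau)\star_\tau$ hidden in the definition of $[z^{n-1}\bS_k(\tau)]_+$ cancels exactly against the term produced by differentiating $M(\tau(\by),z)$ through the mirror map, so the $n=0$ and $n\ge 1$ equations unify into the single uniform formula above.

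For the left-hand side, the chain rule applied to $y_l(z)=y_l+\sum_{n\ge 1}y_{l,n}z^n$ gives $\partial y_l(z)/\partial y_{k,n}=\delta_{k,l}z^n$, so
\begin{equation*}
\parfrac{I(\by(z),z)}{y_{k,n}} = z^n\,(\partial_{y_k}I)(\by(z),z).
\end{equation*}
The proof of Proposition \ref{prop:ext_I} established $\partial_{y_k}I(\by,z)=z^{-1}\cS_k I(\by,z)$ for every $k\in\bN\cap|\Sigma|$; substituting $\by\to\by(z)$ yields $\partial_{y_{k,n}}I(\by(z),z)=z^{n-1}\cS_k\,I(\by(z),z)$, the same system satisfied by $\psi$.

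Restriction to the locus $\{y_{k,n}=0:n\ge 1\}$ sends $\by(z)$ to $\by$ and $\Upsilon(\by,\by^+,z)$ to $\Upsilon(\by,z)$, so Proposition \ref{prop:ext_I} supplies the matching initial data. Since the PDE system $\partial_{y_{k,n}}\bbf=z^{n-1}\cS_k\bbf$ determines any solution in $H^*_\hT(X_\Sigma)_\loc[\![Q]\!][\![\by,\by^+]\!]$ uniquely from its restriction to $\{\by^+=0\}$ by recursively reading off the coefficients of monomials in the $\by^+$ variables, the two sides coincide. The main obstacle is the bookkeeping at $n=0$: one must verify carefully that the $-z^{-1}S_k\star_\tau$ correction in $[\,\cdot\,]_+$ precisely absorbs the contribution of $\partial_{y_k}M(\tau(\by),z)$ transmitted through the mirror map, so that what a priori look like two different flows for $n=0$ versus $n\ge 1$ assemble into a single consistent linear system.
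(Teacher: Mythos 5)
Your proof is correct and follows the same route as the paper: show that both $I(\by(z),z)$ and $zM(\tau(\by),z)\Upsilon(\by,\by^+,z)$ satisfy the system $\partial_{y_{k,n}}\bbf = z^{n-1}\cS_k\bbf$ (the left-hand side via $z\partial_{y_k}I=\cS_k I$ from the proof of Proposition \ref{prop:ext_I}, the right-hand side via Proposition \ref{prop:shift}(1) and Theorem \ref{thm:y+}), and conclude by uniqueness given the initial data at $\by^+=0$ supplied by Proposition \ref{prop:ext_I}. Your explicit verification that the $-\delta_{n,0}z^{-1}S_k\star_\tau$ correction cancels against the derivative of $M(\tau(\by),z)$ through the mirror map is exactly the computation the paper leaves as ``follows easily.''
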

\begin{proof} 
It suffices to show that both sides satisfy the same differential 
equation: 
\[
\parfrac{\bbf(\by,\by^+)}{y_{k,n}} = z^{n-1} \cS_k \bbf(\by,\by^+). 
\]
In the proof of Proposition \ref{prop:ext_I}, we showed that 
$z \parfrac{I(\by,z)}{y_k} = \cS_k I(\by,z)$. Thus 
$\bbf = I(\by(z),z)$ satisfies the above differential equation. 
The differential equation for $\bbf = z M(\tau(\by),z) \Upsilon(\by,\by^+,z)$
follows easily from Proposition \ref{prop:shift} (1) and Theorem \ref{thm:y+}. 
\end{proof} 

Recall from \S \ref{subsec:primitive_form} 
that the primitive form $\zeta$ is given by 
$\omega(\by^+)$ in \eqref{eq:omega_y+} such 
that $\Upsilon(\by,\by^+,z) = 1$. 
Thus the asymptotics \eqref{eq:J_asymptotics} implies: 

\begin{corollary} 
\label{cor:primitive} 
The primitive form $\zeta$ is given by $\omega(\by^+)$ 
for $\by^+$ such that the asymptotics $I(\by(z),z) = z(1+ O(z^{-1}))$ holds.  
Moreover, for such $\by^+$, the asymptotics $I(\by(z),z) = z + \tau(\by) + O(z^{-1})$ 
determines the mirror map $\tau(\by)$. 
\end{corollary}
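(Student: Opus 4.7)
The plan is to derive Corollary \ref{cor:primitive} as a direct consequence of Proposition \ref{prop:verybig_I}, viewing the identity $I(\by(z),z) = zM(\tau(\by),z)\Upsilon(\by,\by^+,z)$ as a Birkhoff-type factorization, and combining it with the characterization of the primitive form via $\Upsilon(\by,\by^+,z) = 1$ (Corollary \ref{cor:primitive_Upsilon}) together with the asymptotic expansion \eqref{eq:J_asymptotics} of the fundamental solution $M(\tau,z)$.

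For the forward direction, if $\by^+$ is such that $\omega(\by^+)$ is the primitive form, then $\Upsilon(\by,\by^+,z) = 1$ by Corollary \ref{cor:primitive_Upsilon}, so Proposition \ref{prop:verybig_I} together with \eqref{eq:J_asymptotics} gives
\[
I(\by(z),z) = zM(\tau(\by),z) \cdot 1 = z + \tau(\by) + O(z^{-1}),
\]
which both confirms the required asymptotic shape and reads off the mirror map $\tau(\by)$ as the $z^0$-coefficient. For the converse, I would argue by uniqueness of the Birkhoff factorization, worked out coefficient-by-coefficient in the variables $(Q, \by - \ybase, \by^+)$. At each such monomial, $M(\tau(\by),z)$ contributes a formal power series in $z^{-1}$ with leading term the identity, while $\Upsilon(\by,\by^+,z)$ contributes a polynomial in $z$. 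If the left-hand side has no positive $z$-powers beyond $z^1$, i.e.~$I(\by(z),z) = z(1 + O(z^{-1}))$, then writing the monomial coefficient of $\Upsilon$ as $\Upsilon_0 + \Upsilon_1 z + \cdots + \Upsilon_N z^N$, the highest positive $z$-power appearing in $zM\Upsilon$ equals $\Upsilon_N z^{N+1}$; this forces $N = 0$, so each coefficient is $z$-constant. The remaining condition on the $z^1$-coefficient pins down $\Upsilon(\by,\by^+,z) = 1$, and Corollary \ref{cor:primitive_Upsilon} identifies $\omega(\by^+)$ as the primitive form.

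I do not anticipate a serious obstacle: the key ingredients—Proposition \ref{prop:verybig_I}, the characterization of $\Upsilon = 1$, and the leading asymptotics of $M(\tau,z)$—are already in place, and the remaining work amounts to uniqueness of Birkhoff factorization, which is a routine degree count in $z$. The one subtlety to handle carefully is that $\Upsilon$ is globally unbounded in $z$-degree (its polynomial degree in $z$ can grow with the order in $(Q,\by-\ybase,\by^+)$), so the factorization argument must be executed order-by-order in these formal variables rather than globally; at each finite order, however, everything reduces to finite-degree polynomials in $z$ and standard formal Laurent series manipulations, so the argument goes through without difficulty.
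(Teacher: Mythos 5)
Your proposal is correct and follows essentially the same route as the paper: the paper deduces the corollary directly from Proposition \ref{prop:verybig_I}, the characterization of the primitive form by $\Upsilon(\by,\by^+,z)=1$ (Corollary \ref{cor:primitive_Upsilon}), and the asymptotics \eqref{eq:J_asymptotics} of $M(\tau,z)$. The only difference is that you spell out the converse implication (that the asymptotic condition forces $\Upsilon=1$) via the order-by-order Birkhoff uniqueness argument, which the paper leaves implicit; that argument is sound, including your observation about the $z$-degree of $\Upsilon$ growing with the order in $(Q,\by,\by^+)$.
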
 

\begin{remark} 
Proposition \ref{prop:verybig_I} implies that $I(\by(z),z)$ lies on the Givental cone. 
In fact, the family of vectors $(\by,\by^+) \mapsto I(\by(z),z)$ covers the whole 
Givental cone and $(\by,\by^+)$ may be viewed as a B-model co-ordinate 
system on the cone. 
\end{remark} 

\begin{remark} 
\label{rem:cone_quotient}
When we identify the space $\Spf(\C[\![Q]\!][\![\by,\by^+]\!])$ with 
the Givental cone as in the above remark, we can interpret 
Theorem \ref{thm:orbit_space} as follows: \emph{the non-equivariant 
Givental cone is the orbit space 
of the equivariant Givental cone under the action of the group $J\cG$ 
of reparametrizations of the mirror}. 
\end{remark} 

\begin{remark} 
The formal geometry appearing in this paper is very similar to 
the treatment of the Givental cone as a formal scheme 
in \cite[Appendix B]{CCIT:computing}. 
\end{remark} 

\begin{remark} 
It should be possible to generalize 
the results in this paper to toric orbifolds (or toric Deligne-Mumford stacks). 
This is interesting  
since toric orbifolds correspond to arbitrary Laurent polynomials. 
See \cite{CCLT:wp,Guest-Sakai, Douai-Mann:wp, Gonzalez-Woodward:tmmp, 
CCIT:mirrorthm, Cheong-CF-Kim, You:Seidel} for related works.  
\end{remark}

\appendix
\section{Formal geometry in infinite dimensions}
For the sake of completeness, we prove a formal inverse function theorem and 
the existence of a flow of a vector field in infinite dimensions. 
The results here are straightforward generalizations 
of well-known results in finite dimensions, 
but we could not find a reference. 
Throughout the section, we assume that $R$ is a linearly topologized ring 
containing $\Q$ and that $R$ is complete and Hausdorff. 
We denote by $\{R_\nu\}$ a fundamental neighbourhood system 
of zero consisting of ideals of $R$. 

Let $\bx = \{x_1,x_2,x_3,\dots\}$ be a countably infinite 
set of variables. 
A morphism $f \colon \Spf(R[\![\bx]\!]) \to \Spf(R[\![\bx]\!])$ of 
formal schemes over $R$ (see \S \ref{subsec:power_series} 
for $R[\![\bx]\!]$) is given by a tuple 
$\{f^*(x_1),f^*(x_2), f^*(x_3),\dots\}$ 
of elements in $R[\![\bx]\!]$ such that $f^*(x_i)|_{\bx =0}\in R_\nilp$ 
and $\lim_{n\to \infty} f^*(x_n) =0$, 
where $R_\nilp = \{x \in R : \lim_{n\to \infty} x^n = 0\}$. 
Consider the $R$-module 
\[
T := \left\{(r_n)_{n=1}^\infty \in R^{\N} : 
\lim_{n\to\infty} r_n = 0 \right\} 
\cong (\Q^{\oplus \N})\hotimes R. 
\] 
The topology on $T$ is defined by submodules 
$(\Q^{\oplus \N})\hotimes R_\nu$.  
A morphism $f$ associates the (continuous) 
tangent map $df \colon T \to T$ defined by $df (e_i) = \sum_{j=1}^\infty 
\left.\parfrac{f^*(x_j)}{x_i}\right|_{\bx=0} e_j$. 
The following gives two important classes of morphisms.  
\begin{itemize} 
\item for a continuous $R$-module homomorphism  
$A \colon T \to T$ with $A(e_i) = \sum_{j=1}^\infty a_{ji} e_j$,  
we have a linear map $f$ given 
by $f^*(x_j) = \sum_{j=1}^\infty a_{ji} x_i$; 

\item for an element $(r_j)_{j=1}^\infty \in T$ with $r_j \in R_\nilp$, 
we have a translation map $f$ given by $f^*(x_j) = x_j + r_j$. 
\end{itemize} 

\begin{theorem}[formal inverse function theorem] 
\label{thm:formal_inverse_function} 
Let $f \colon \Spf (R[\![\bx]\!]) \to \Spf(R[\![\bx]\!])$ be a morphism of 
formal schemes over $R$. If the tangent map $df \colon T \to T$ at 
$\bx=0$ is an isomorphism, $f$ is an isomorphism.  
\end{theorem}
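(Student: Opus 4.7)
The plan is to mimic the classical finite-dimensional inverse function theorem for formal power series via a Picard-style iteration, taking care that the iteration respects both the linear topology on $R$ and the infinite index set. First I would perform two reductions to normalize $f$: composing $f$ on the target with the translation $x_i\mapsto x_i-f^*(x_i)|_{\bx=0}$, which is a valid morphism of formal schemes because each $f^*(x_i)|_{\bx=0}$ is topologically nilpotent in $R$ and the sequence tends to $0$ as $i\to\infty$, I may assume $f^*(x_i)|_{\bx=0}=0$ for all $i$. Composing next with the linear morphism corresponding to the continuous inverse $(df)^{-1}\colon T\to T$, which is a morphism because $df$ is a topological isomorphism of $T$, I may further assume $df=\id$, so that $f^*(x_i)=x_i-g_i(\bx)$ with $g_i\in R[\![\bx]\!]$ having no monomial of total degree $\le 1$.

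Next I would construct the inverse by iteration. Set $h_0^*(x_i)=x_i$ and $h_{k+1}^*(x_i)=x_i+g_i(h_k^*(\bx))$. The substitution is well defined at each stage because inductively $h_k^*(x_j)|_{\bx=0}=0$ and $g_i$ has no constant term, so only finitely many monomials of $g_i$ contribute to any prescribed output monomial. An induction on $k$ shows that every monomial of $h_{k+1}^*(x_i)-h_k^*(x_i)$ has total degree $\ge k+2$: substituting two tuples of series of order $\ge 1$ that agree modulo order $k+1$ into a series of order $\ge 2$ yields outputs agreeing modulo order $k+2$. Consequently, for each multi-index $I$ the coefficient of $\bx^I$ in $h_k^*(x_i)$ stabilizes once $k\ge|I|$, so I obtain a well-defined limit $h^*(x_i)\in R[\![\bx]\!]$ solving the fixed-point equation $h^*(x_i)=x_i+g_i(h^*(\bx))$.

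Finally I would verify that $\{h^*(x_i)\}$ defines a morphism and is a two-sided inverse of $f$. The condition $h^*(x_i)|_{\bx=0}=0$ is immediate from the construction. For the tail condition $\lim_{i\to\infty}h^*(x_i)=0$ in $R[\![\bx]\!]$, fix a multi-index $I$; since the coefficient of $\bx^I$ in $h^*(x_i)$ equals that in $h_{|I|}^*(x_i)$, it suffices to show by induction on $k$ that for each fixed $I$, the coefficient of $\bx^I$ in $h_k^*(x_i)$ tends to $0$ in $R$ as $i\to\infty$. This uses the inductive hypothesis applied to the inner series $h_{k-1}^*(x_j)$ together with the hypothesis $g_i\to 0$ in $R[\![\bx]\!]$ as $i\to\infty$. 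The identities $f\circ h=h\circ f=\id$ then follow formally from the fixed-point equation, exactly as in the finite-dimensional setting. The main obstacle is precisely this last verification: one must interlock the degree filtration on $R[\![\bx]\!]$ with the linear topology on $R$ so that substitution behaves continuously in the relevant senses and the tail condition $h_k^*(x_i)\to 0$ propagates through each step of the iteration. Once this bookkeeping is in place, the classical finite-dimensional argument transcribes almost verbatim.
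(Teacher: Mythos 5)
Your proposal is correct in outline but takes a genuinely different route from the paper. After the same two normalizations (translate so that $f(0)=0$, compose with $(df)^{-1}$ so that $df=\id$), the paper does \emph{not} run an iteration in infinitely many variables: it observes that for each $n$ the composite $R[\![x_1,\dots,x_n]\!]\hookrightarrow R[\![\bx]\!]\xrightarrow{f^*}R[\![\bx]\!]\twoheadrightarrow R[\![x_1,\dots,x_n]\!]$ is a ring endomorphism whose linearization is the identity, invokes the finite-dimensional formal inverse function theorem over $R$ (Hazewinkel), and recovers the inverse of $f^*$ from the projective limit description $R[\![\bx]\!]=\varprojlim_n R[\![x_1,\dots,x_n]\!]$ of \S\ref{subsec:power_series}. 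What that buys is precisely the avoidance of the convergence questions your iteration must face; what your route buys is self-containedness (no appeal to the finite-dimensional statement) and an explicit fixed-point description of the inverse.

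There is, however, one step in your argument that is wrong as stated, and it sits exactly at the point you identify as the main obstacle. It is not true that ``only finitely many monomials of $g_i$ contribute to any prescribed output monomial'': with infinitely many variables, a series such as $g_i=\sum_{j}a_{i,j}x_j^2$ has infinitely many monomials all contributing to the coefficient of $x_1^2$ after substitution. The substitution $g_i(h_k^*(\bx))$ is well defined not for the finiteness reason you give, but because the inner tuple satisfies the tail condition $\lim_{j\to\infty}h_k^*(x_j)=0$, the neighbourhoods $R_\nu$ are ideals, and $R$ is complete, so the relevant infinite families of contributions are summable. Consequently the tail condition is not something to be verified only ``at the end'': it must be established at every stage $k$ \emph{before} the next substitution can even be performed, so the induction has to carry the statement ``$h_k^*(x_i)|_{\bx=0}=0$, $\lim_{i\to\infty}h_k^*(x_i)=0$, and $h_k^*$ agrees with $h_{k-1}^*$ to order $k+1$'' as a single package. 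With that repair the degree-stabilization argument, the limit $h^*$, and the formal verification of $f\circ h=h\circ f=\id$ all go through as you describe.
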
 
\begin{proof} 
By composing with a linear map and a translation, 
we may assume that $f(0) = 0$ and the tangent map $df$ is the identity. 
Then the truncation of $f^*$ given by 
$R[\![x_1,\dots,x_n]\!] \subset R[\![\bx]\!] \xrightarrow{f^*} 
R[\![\bx]\!] \twoheadrightarrow R[\![x_1,\dots,x_n]\!]$ 
is an isomorphism, by the inverse function theorem in finite 
dimensions (see \cite[Appendix A]{Hazewinkel}; the proof over a discrete 
ring works verbatim over $R$). 
It follows easily that $f^*$ is an isomorphism. 
\end{proof} 

Next we discuss the integrability of a formal vector field. 
A \emph{formal vector field} on $\Spf(R[\![\bx]\!])$ over $R$ is 
a formal sum $V= \sum_{n=1}^\infty V_n(\bx) \parfrac{}{x_n}$ 
with $V_n(\bx) \in R[\![\bx]\!]$ such that 
$\lim_{n \to \infty} V_n(\bx) = 0$. 
We consider the flow $t\mapsto \bx(t)=(x_n(t))_{n=1}^\infty$ satisfying 
\begin{equation}
\label{eq:formal_flow}
\frac{dx_n(t)}{dt} = V_n(\bx(t)) \qquad 
\text{with $x_n(0) = x_n$}. 
\end{equation} 
\begin{theorem} 
\label{thm:formal_flow} 
There exists a unique solution $\bx(t) = (x_n(t))_{n=1}^\infty$ to the 
equation \eqref{eq:formal_flow} which defines a morphism 
$\Spf(R[\![\bx]\!][\![t]\!]) \to \Spf(R[\![\bx]\!])$ of formal schemes. 
Let $I\subset R$ be an ideal such that, 
for any $\nu$, there exists $n\in \N$ such that $I^n \subset R_\nu$. 
If $V_n(\bx)\in I[\![\bx]\!]$ for all $n$, then the  
substitution $t=1$ in the solution $\bx(t)$ is well-defined and we 
obtain a time-one flow map $\Spf(R[\![\bx]\!]) \to \Spf(R[\![\bx]\!])$. 
\end{theorem}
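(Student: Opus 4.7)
The plan is to construct $x_n(t)$ by comparing coefficients of powers of $t$ in \eqref{eq:formal_flow}, and then extract the morphism properties from the structure of the resulting recursion. Write $x_n(t) = x_n + \sum_{k \ge 1} a_{n,k}(\bx)\, t^k$ with $a_{n,k}(\bx) \in R[\![\bx]\!]$. The substitution $V_n(\bx(t))$ is well-defined in $R[\![\bx]\!][\![t]\!]$ because $x_m(t) - x_m \in (t)$ for every $m$ and each coefficient of $V_n$ involves only finitely many of the variables. Comparing $[t^{k-1}]$ on the two sides of $\frac{d x_n(t)}{dt} = V_n(\bx(t))$ for $k \ge 1$ yields $k\, a_{n,k}(\bx)$ on the left, while Taylor expansion on the right produces a finite $R$-linear combination (with rational coefficients, meaningful since $\Q \subset R$) of products of the form $(\partial^\alpha V_n)(\bx)\, a_{m_1,j_1}(\bx)\cdots a_{m_s,j_s}(\bx)$ with $j_1+\dots+j_s = k-1$. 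In particular $a_{n,1}(\bx) = V_n(\bx)$, and the higher $a_{n,k}$ are then uniquely determined by induction on $k$, proving existence and uniqueness of $\bx(t) \in R[\![\bx]\!][\![t]\!]^{\N}$.

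For the morphism property $\Spf(R[\![\bx]\!][\![t]\!]) \to \Spf(R[\![\bx]\!])$, I must check $x_n(t)|_{\bx=0,\,t=0} \in R_\nilp$ and $x_n(t) \to 0$ in the coefficient-wise topology on $R[\![\bx]\!][\![t]\!]$ as $n \to \infty$. The first is automatic since the value is $0$. For the second, it suffices, for each fixed $k$, to show $a_{n,k} \to 0$ in $R[\![\bx]\!]$. The key observation is that every summand on the right of the recursion carries an outer factor $(\partial^\alpha V_n)(\bx)$ with the \emph{same} index $n$ as the one on the left; since partial differentiation is continuous on $R[\![\bx]\!]$ in the coefficient-wise topology (it just shifts indices and multiplies coefficients by integers), the hypothesis $V_n \to 0$ gives $\partial^\alpha V_n \to 0$ for every $\alpha$, and by induction on $k$ this propagates through the recursion.

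For the time-one specialisation under the extra hypothesis $V_n \in I[\![\bx]\!]$ for all $n$, I claim by induction on $k$ that $a_{n,k}(\bx) \in I^k \cdot R[\![\bx]\!]$. The base case $a_{n,1} = V_n$ is immediate, and differentiation preserves $I[\![\bx]\!]$ coefficient-wise. In the inductive step each summand of the recursion has one factor $(\partial^\alpha V_n) \in I[\![\bx]\!]$ and other factors $a_{m_i,j_i} \in I^{j_i}[\![\bx]\!]$ (by induction), giving a total of $I^{1+(k-1)} = I^k$; division by $k$ is legal because $\Q \subset R$. The assumption on $I$ then yields, for any $R_\nu$, an $N$ with $I^N \subset R_\nu$, so the series $\sum_{k\ge 0} a_{n,k}(\bx)$ converges coefficient-wise in $R[\![\bx]\!]$. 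Its limit $x_n(1)$ defines the time-one map, and the morphism conditions for $\bx \mapsto \bx(1)$ follow from those already verified at the level of each $a_{n,k}$.

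The main obstacle is the combinatorial bookkeeping ensuring that in every term of the recursion for $a_{n,k}$ the outer $V$-factor is $(\partial^\alpha V_n)$ itself (not some $(\partial^\alpha V_m)$ with $m \ne n$). This one fact simultaneously drives both the tuple-convergence $a_{n,k} \to 0$ as $n \to \infty$ and the $I$-adic estimate $a_{n,k} \in I^k$; it is really nothing more than the Taylor expansion of $V_n \circ \bx(t)$ in the single scalar variable $t$, but its formulation in infinitely many $\bx$-variables requires some care. A tidier (and essentially equivalent) alternative is Picard iteration for the tuple-valued operator sending $(y_n(t))_n$ to $\bigl(x_n + \int_0^t V_n(y(s))\,ds\bigr)_n$ in the coefficient-wise topology, from which the $I$-adic control can be read off by tracking the filtration of successive iterates.
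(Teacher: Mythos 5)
Your proposal is correct in substance but follows a genuinely different route from the paper. You solve \eqref{eq:formal_flow} by comparing coefficients of $t^k$ and running an induction on $k$, then verify the morphism conditions ($a_{n,k}\to 0$ as $n\to\infty$) and the $I$-adic estimate ($a_{n,k}$ has coefficients in $I^k$) by tracking the Taylor-expansion recursion. The paper instead regards $V=\sum_n V_n\partial/\partial x_n$ as a continuous derivation of $R[\![\bx]\!]$ and defines the flow as the Lie series $\varphi\mapsto \exp(tV)\varphi=\sum_k \frac{t^k}{k!}V^k\varphi$; this makes the ring-homomorphism (hence morphism-of-formal-schemes) property automatic, and the time-one statement follows in one line from $V^k\varphi\in I^k[\![\bx]\!]$, i.e.\ $V^k\varphi\to 0$ uniformly in $\varphi$. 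Your approach buys explicitness (the recursion for the $a_{n,k}$ and the ``outer factor $\partial^\alpha V_n$'' observation, which is indeed the crux of both the decay in $n$ and the $I$-adic bound) at the cost of the bookkeeping you acknowledge. Two points deserve repair. First, the order of your argument is slightly off: already to make sense of $[t^{k-1}]V_n(\bx(t))$ one must sum over infinitely many multi-indices $\alpha$ (e.g.\ the constant term of $\sum_m(\partial_m V_n)\,a_{m,1}$ receives a contribution from every $m$), and the convergence of that sum uses $a_{m,j}\to 0$ as $m\to\infty$ for $j<k$; so the decay in $m$ cannot be checked in a second pass after existence but must be carried in the same induction on $k$ as the construction itself. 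Second, since the coefficient of a fixed monomial in $a_{n,k}$ is an infinite convergent sum of elements of $I^k$, what you actually get is membership in the closure $\overline{I^k}$; this still suffices because $I^N\subset R_\nu$ implies $\overline{I^N}\subset R_\nu$ (the $R_\nu$ being open, hence closed), but the statement $a_{n,k}\in I^k\cdot R[\![\bx]\!]$ as written is not quite what the induction yields. Neither issue is fatal; both disappear in the paper's $\exp(tV)$ formulation, where each $t^k$-coefficient is the single term $\frac{1}{k!}V^k x_n$.
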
 
\begin{proof} 
Note that $V$ defines a well-defined continuous mapping 
$V \colon R[\![\bx]\!] \to R[\![\bx]\!]$. 
The flow is given by a continuous 
ring homomorphism $R[\![\bx]\!] \to R[\![\bx]\!][\![t]\!]$ 
defined by $\varphi \mapsto \exp(t V) \varphi = \sum_{k=0}^\infty 
\frac{1}{k!} t^k V^k(\varphi)$, where $V^k$ is the $k$-fold composition of $V$, 
see \cite[3C]{Ilyashenko-Yakovenko}. 
The former statement follows. To see the latter, it suffices to 
notice that $\lim_{k\to \infty} V^k(\varphi) = 0$ 
uniformly for all $\varphi \in R[\![\bx]\!]$ under the assumption. 
\end{proof}

\bibliographystyle{plain}
\bibliography{shift_mirror}
\end{document}